\newtheorem{cor}{Corollary}%[section]
\newtheorem{lem}{Lemma}
\newtheorem{prop}{Proposition}
\theoremstyle{definition}
\newtheorem{defn}{Definition}
\theoremstyle{definition}
\newtheorem{thm}{Theorem}
\newtheorem{rem}{Remark}
\newtheorem{q}{Example}
\newcounter{cnt}
\def\mydggeometry{\makeatletter\dg@YGRID=1\dg@XGRID=20\unitlength=0.003pt\makeatother}
\makeatother \theoremstyle{remark}
\numberwithin{equation}{section}
\let\bwdg\bigwedge
\def\bigwedge{{\textstyle\bwdg}}
\newcommand{\thmref}[1]{Theorem~\ref{#1}}
\newcommand{\secref}[1]{Section~\ref{#1}}
\newcommand{\lemref}[1]{Lemma~\ref{#1}}
\newcommand{\propref}[1]{Proposition~\ref{#1}}
\newcommand{\remref}[1]{Remark~\ref{#1}}
\newcommand{\Ker}{\operatorname{Ker}}
\newcommand{\nc}{\newcommand}
\newcommand{\rnc}{\renewcommand}
\nc{\cal}{\mathcal} \nc{\goth}{\mathfrak} \rnc{\bold}{\mathbf}
\nc{\fk}{\mathfrak}
\nc{\germ}{\mathfrak}
\renewcommand{\Bbb}{\mathbb}
\nc\bomega{{\mbox{\boldmath $\omega$}}}
\nc\bxi{{\mbox{\boldmath $\xi$}}}
\nc\blambda{{\mbox{\boldmath $\lambda$}}}
\nc\bOmega{{\mbox{\boldmath $\Omega$}}}
\nc\bchi{{\mbox{\boldmath $\chi$}}}\nc\bnu{{\mbox{\boldmath $\nu$}}}
\nc\bpsi{{\mbox{\boldmath $\Psi$}}} \nc\bast{{\mbox{\boldmath $\ast$}}}
\nc\balpha{{\mbox{\boldmath $\alpha$}}}
\nc\bpi{{\mbox{\boldmath $\pi$}}}
\nc\bsigma{{\mbox{\boldmath $\sigma$}}} \nc\bcN{{\mbox{\boldmath $\cal{N}$}}} \nc\bcm{{\mbox{\boldmath $\cal{M}$}}} 
\nc\bLambda{{\mbox{\boldmath$\Lambda$}}} 
\nc\bll{{\mbox{\boldmath$\ell$}}} \nc\bgamma{{\mbox{\boldmath$\gamma$}}}
\nc\p{{\mbox{\boldmath$\rho$}}} 
\nc\bmu{{\mbox{\boldmath$\mu$}}}
\newcommand{\mfg }{\mathfrak{g}}
\def\subsection{\def\@secnumfont{\bfseries}\@startsection{subsection}{2}%
{\parindent}{.5\linespacing\@plus.5\linespacing}{-.5em}%
{\normalfont\bfseries}}
\nc{\Hom}{\operatorname{Hom}}
\nc{\td}{\operatorname{\tilde{d}}}\nc{\D}{\operatorname{d}}
\nc{\mode}{\operatorname{mod}}
\nc{\End}{\operatorname{End}} \nc{\wh}[1]{\widehat{#1}} \nc{\Ext}{\operatorname{Ext}} \nc{\ch}{\text{ch}} \nc{\ev}{\operatorname{ev}}
\nc{\Ob}{\operatorname{Ob}} \nc{\soc}{\operatorname{soc}} \nc{\rad}{\operatorname{rad}} \nc{\head}{\operatorname{head}}
\nc{\A}{\operatorname{\bold{b}}}
\def\ker{\operatorname{Ker}}
\def\ch{\operatorname{ch}}
\nc{\Cal}{\cal} \nc{\Xp}[1]{X^+(#1)} \nc{\Xm}[1]{X^-(#1)}
\nc{\on}{\operatorname} \nc{\Z}{{\mathbb{Z}}} \nc{\J}{{\cal J}} \nc{\C}{{\mathbb{C}}} \nc{\Q}{{\bold Q}} \nc{\R}{{\mathbb{R}}}
\nc{\K}{\bold{\kappa}}
\nc{\F}{\operatorname{\boF}} 
\nc{\N}{{\Bbb N}} \nc\boa{\bold a} \nc\bob{\bold b} \nc\boc{\bold c} \nc\bod{\bold d} \nc\boe{\bold e} \nc\bof{\bold f} \nc\bog{\bold g}
\nc\boh{\bold h} \nc\boi{\bold i} \nc\boj{\bold j} \nc\bok{\bold k} \nc
\nc\bom{\bold m} \nc\bon{\bold n} \nc\boo{\bold o}
\nc\bop{\bold p} \nc\boq{\bold q} \nc\bor{\bold r} \nc\bos{\bold s} \nc\boT{\bold t} \nc\boF{\bold F} \nc\bou{\bold u} \nc\bov{\bold v}
\nc\bow{\bold w} \nc\boz{\bold z} \nc\boy{\bold y} \nc\ba{\bold A} \nc\bb{\bold B} \nc\bc{\bold C} \nc\bd{\bold D} \nc\be{\bold E} \nc\bg{\bold
G} \nc\bh{\bold H} \nc\bi{\bold I} \nc\bj{\bold J} \nc\bk{\bold K} \nc\bl{\bold L} \nc\bm{\bold M} \nc\bn{\bold N} \nc\bo{\bold O} \nc\bp{\bold
P} \nc\bq{\bold Q} \nc\br{\bold R} \nc\bs{\bold S} \nc\bt{\bold T} \nc\bu{\bold U} \nc\bv{\bold V} \nc\bw{\bold W} \nc\bz{\bold Z} \nc\bx{\bold
x} \nc\KR{\bold{KR}} \nc\rk{\bold{rk}} \nc\het{\text{ht }}
\nc\fn{{fin}}  \nc\af{{aff}}  \nc\tr{{tor}} \nc\btilde{\bold{\tilde{\bold{H}}}}
\nc{\mpp}{\rotatebox[origin=c]{180}{\pm}}
\nc{\iA}{\rotatebox[origin=c]{180}{A}}
\nc{\iB}{\rotatebox[origin=c]{180}{B}}
\nc{\iC}{\rotatebox[origin=c]{180}{C}}
\nc\eps{\epsilon}
\nc\toa{\tilde a} \nc\tob{\tilde b} \nc\toc{\tilde c} \nc\tod{\tilde d} \nc\toe{\tilde e} \nc\tof{\tilde f} \nc\tog{\tilde g} \nc\toh{\tilde h}
\nc\toi{\tilde i} \nc\toj{\tilde j} \nc\tok{\tilde k} \nc\tol{\tilde l} \nc\tom{\tilde m}  \nc\ton{\tilde n} \nc\too{\tilde o} \nc\toq{\tilde q}
\nc\tor{\tilde r} \nc\tos{\tilde s} \nc\toT{\tilde t} \nc\tou{\tilde u} \nc\tov{\tilde v} \nc\tow{\tilde w} \nc\toz{\tilde z}
\begin{document}

%\hline
\title{Graded representations of current Lie superalgebras $\mathfrak{sl}(1|2)[t]$
 %Super POP and short exact sequences of CV Modules for $\mathfrak{sl}(1|2)[t]$
 }

\author{Shushma Rani}
\author{Divya Setia}

 \address{Department of Mathematics, Indian Institute of Science, Bangalore, 560012, India.}
 \email{shushmarani@iisc.ac.in, shushmarani95@gmail.com}	
	\address{Indian Institute of Science Education and Research Berhampur, Govt. ITI Building, Engineering School Rd, Junction, Gajapati Nagar, Brahmapur, 760010, Odisha, India}
	\email{divyasetia01@gmail.com}
	
\date{}
\thanks{}

\subjclass [2020]{16S30,17B05, 17B10 17B35 17B65, 17B67, 17B70, 05E10}
\keywords{Super POP, CV-modules, Local Weyl module, Generalized Kac modules, Current Lie superalgebras}

%\maketitle

\begin{abstract}
This paper is the study of finite-dimensional graded representations of current lie superalgebras $\mathfrak{sl}(1|2)[t]$. We define the notion of super POPs, a combinatorial tool to provide another parametrization of the basis of the local Weyl module given in \cite{macedo}. We derive the graded character formula of local Weyl module for $\mathfrak{sl}(1|2)[t]$. Furthermore, we construct a short exact sequence of Chari-Venkatesh modules for $\mathfrak{sl}(1|2)[t]$. As a consequence, we prove that Chari-Venkatesh modules are isomorphic to the fusion of generalized Kac modules.
\iffalse This manuscript focuses on the representation theory of the current Lie superalgebra $\mathfrak g=\mathfrak{sl}(1|2)[t]$. We introduce a generalized combinatorial tool, called super POP, to rewrite the basis of local Weyl modules for $\mathfrak g$, extending the POP framework developed for current Lie algebras.  Through these constructions, we open pathways for future research on graded representations of Lie superalgebras. Our results extend the foundational work on
$\mathfrak{sl}_2[t]$ to $\mathfrak{sl}(1|2)[t]$, demonstrating the potential for broader generalizations and new problems in the study of current Lie superalgebras. \fi
\end{abstract}

\maketitle

%----------INTRODUCTION------

\section{Introduction}
Let $\mathfrak{a}$ be a simple finite-dimensional Lie algebra over the complex field $\mathbb{C}$ and associated to $\mathfrak{a}$, define the current Lie algebra $\mathfrak{a}[t]: = \mathfrak{a} \otimes \mathbb{C}[t]$ as a vector space where $\mathbb{C}[t]$ denotes the polynomial ring in indeterminate $t$. The representation theory of current Lie algebras has gained significant attention in recent years. In \cite{chari-pressley}, authors introduced the notion of local Weyl modules (denoted by $W_{loc}(\lambda)$) that are universal objects in the category of finite-dimensional graded highest weight modules of $\mathfrak{a}[t]$ where $\lambda$ is a dominant integral weight of $\mathfrak{a}$. In \cite{CI15}, it was proved that the graded character of the local Weyl module coincides with the non-symmetric Macdonald polynomial specialized at $t=0$. This result establishes a strong connection between graded representations of current Lie algebras with algebraic combinatorics.

In \cite{chari-loktev}, authors computed an explicit basis known as Chari-Loktev basis of the local Weyl module for the current Lie algebra of type $A$. Another parametrization of the Chari-Loktev basis of the local Weyl module was given in \cite{pop} where the authors introduced the notion of POPs (partition overlay patterns) and proved that POPs with bounded sequence $\lambda$ parametrize the Chari-Loktev basis of $W_{loc}(\lambda)$. In \cite{cv}, an important family of finite-dimensional quotients of local Weyl modules was defined, known as Chari-Venkatesh modules (in short CV modules).

In \cite{FL99}, a family of $\mathbb{Z}$-graded modules for current Lie algebras called fusion product modules were introduced where the underlying vector space comprises tensor product of highest weight $\mathfrak{a}[t]$-modules. Also, the definition of fusion product module is based on certain set of evaluation parameter, but it was conjectured that the structure of these modules remains unchanged regardless of the chosen set of evaluation parameter. This conjecture was confirmed in several cases \cite{chari-pressley, chari-loktev, cv, FL99, FL07, Naoi17, ravinder14, TS23, SRK24} over the past two decades. CV modules is an important tool in confirming this conjecture for various cases \cite{cv,Naoi17,ravinder14,TS23,SRK24}.

Let us move our attention to the representation theory of Lie superalgebras. The representation theory of Lie superalgebras has been extensively studied over the past few decades. Let $\mathfrak{g}$ denote the Lie superalgebra and the current Lie superalgebra associated to $\mathfrak{g}$ is denoted by $\mathfrak{g}[t]$, a class of infinte-dimensional Lie superalgebras, which carry a $\mathbb Z_+$-grading induced by the grading of the polynomial algebra $\mathbb C[t]$. Several authors have attempted to construct results for current Lie superalgebras analogous to the one established for current Lie algebras. 

In \cite{savage,bagci}, the notion of local Weyl modules for current Lie superalgebras was introduced. Then the dimension, basis, and graded character formula of a local Weyl module were also derived in \cite{Kus18, FM17} for the current Lie superalgebra $\mathfrak{osp}(1|2)[t]$. It was also proved that the graded character of the local Weyl module coincides with the non-symmetric Macdonald polynomials specialised at $t=0$.

In this paper, we are mainly interested in the representation theory of the current Lie superalgebra $\mathfrak{sl}(1|2)[t]$. The category of finite-dimensional representations of the Lie superalgebra $\mathfrak{osp}(1|2)$ is semisimple. In contrast, the corresponding category for $\mathfrak{sl}(1|2)$ is not semisimple. Therefore, finite-dimensional graded representations of the current Lie superalgebra $\mathfrak{osp}(1|2)[t]$ behave in a similar way as the representation theory of current Lie algebras but it is difficult to explore finite-dimensional graded representations of current Lie superalgebras $\mathfrak{sl}(1|2)[t]$. 

We define the notion of super POPs for the current Lie superalgebra $\mathfrak{sl}(1|2)[t]$ generalizing the notion of POPs given in \cite{pop} for current Lie algebras of type $A$. Super POPs helped us to derive another parametrization of the basis of local Weyl modules given in \cite{macedo} for the current Lie superalgebra $\mathfrak{sl}(1|2)[t]$. In \cite{macedo}, authors computed the dimension and $\mathfrak{g}$-character formula of the local Weyl module in the case of $\mathfrak{sl}(1|2)[t]$ but the graded character formula was still unexplored. Therefore, we compute the graded character formula of the local Weyl module in two different ways, one from the basis given in \cite{macedo} and other by using super POPs. We expect that the graded character formula of the local Weyl module of $\mathfrak{sl}(1|2)[t]$ will coincides with some non-symmetric Macdonald polynomial at $t=0$ similar to the case of $\mathfrak{osp}(1|2)[t]$. 

We develop a short exact sequence of the CV module (denoted by $V(\bxi)$) defined in \cite{macedo} to prove that the fusion product of generalized Kac modules is independent of parameters. This result was also proved in \cite{macedo} by constructing a basis of the CV module for $\mathfrak{sl}(1|2)[t]$. The short exact sequence of CV modules was proved to be an important tool to derive some useful results in the representation theory of current Lie algebras, such as establishing Demazure filtrations of a CV module and proving the Feigin-Loktev conjecture in various cases. This motivates us to define a short exact sequence of CV modules for $\mathfrak{sl}(1|2)[t]$. We expect that the short exact sequence of CV modules will lead to the existence of Demazure filtrations of CV modules. This advancement opens up numerous research directions and provides a framework for tackling many open problems in the literature. 

The structure of this paper is as follows. In \secref{sec2} we give preliminaries, including the definitions of generalized Kac modules and fusion modules. In \secref{sec3}, we recall the definition of local Weyl modules for $\mathfrak{sl}(1|2)[t]$ and introduce the notion of super POPs to establish a bijection between super POPs and the basis of the local Weyl module given in \cite{macedo}. We also compute the graded character formula of the local Weyl module for $\mathfrak{sl}(1|2)[t]$. In \secref{sec4} we focus on CV modules, providing a short exact sequence of CV modules for $\mathfrak{sl}(1|2)[t]$, ultimately showing that CV modules are isomorphic to the fusion of generalized Kac modules.

\section{Preliminaries}\label{sec2}
Throughout this paper, $\mathbb C$ will denote the field of complex numbers, $\mathbb Z_2$ the field of order 2 consisting of $\{0,1\}$ and $\mathbb Z$ the set of integers, $\mathbb Z_+$ (resp. $\mathbb N$)  the set of non-negative integers (resp. positive integers). For $n,r \in \mathbb{Z}_+$, set $${n \choose 0}_q =1, \quad {n \choose r}_q = \frac{(1-q)(1-q^2)\cdots (1-q^n)}{(1-q)\cdots (1-q^r) (1-q) \cdots (1-q^{n-r})} .$$

\subsection{} Let $\mathfrak g=\mathfrak{sl}(1|2)$ be a simple finite-dimensional Lie superalgebra of $3 \times 3$ matrices over $\mathbb C$ of supertrace zero. Since $ \mathfrak g=  \mathfrak g_{\bar 0} \oplus \mathfrak g_{\bar 1}$ with $\mathfrak g_{\bar 1}= \mathfrak g_{-1} \oplus \mathfrak g_{1}$ where $$\begin{array}{rl}
    \mathfrak g_0 =& \left\lbrace \begin{pmatrix}
        a+d & 0 &0\\
        0 & a &b\\
        0 & c & d
    \end{pmatrix}: a,b,c,d \in \mathbb C \right\rbrace  \\
    \mathfrak g_{-1} =& \left\lbrace \begin{pmatrix}
        0 & 0 &0\\
        z & 0 &0\\
        w & 0 & 0
    \end{pmatrix}: z,w \in \mathbb C \right\rbrace  \text{ and } \mathfrak g_{1} = \left\lbrace \begin{pmatrix}
        0 & x & y\\
        0 & 0 &0\\
        0 & 0 & 0
    \end{pmatrix}: x,y \in \mathbb C \right\rbrace
\end{array}$$
with the following Lie bracket $$[A,B] = AB - (-1)^{\bar{i}\bar{j}}BA, \quad \forall A\in \mathfrak g_{\bar i}, B \in \mathfrak g_{\bar j}, \bar i, \bar j \in \mathbb{Z}_{2}.$$
Moreover, the Lie superagebra $\mathfrak{sl}(1|2)$ admits a $\mathbb Z$-gradation $$\mfg = \mfg_{-1}\oplus \mfg_0 \oplus \mfg_{1}$$
\subsection{} Fix the Cartan subalgebra $\mathfrak h \subseteq \mfg$, where the basis of $\mathfrak h$ consists of $\{h_1 : = E_{1,1}+E_{2,2}, h_2 : = E_{2,2}-E_{3,3}\}$. Set $h_3 = E_{1,1}+E_{3,3}$ and we have a root space decomposition of $\mfg = \bigoplus_{\alpha \in \mathfrak h^*}\mfg^{\alpha}$, where $\mfg^{\alpha}:= \{x\in \mfg |[h,x]=\alpha(h)x, \forall h \in \mathfrak h\}$, by the using the adjoint action of $\mathfrak h$ on $\mfg$. Define the unique linear maps $\delta, \epsilon_1, \epsilon_2$, where $$\delta \begin{pmatrix} a& 0& 0\\
0& b& 0\\ 0& 0& c \end{pmatrix} =a, \quad \epsilon_1 \begin{pmatrix} a& 0 &, 0\\ 0& b& 0\\ 0& 0& c \end{pmatrix} =b, \quad \epsilon_2 \begin{pmatrix} a& 0& 0\\ 0& b& 0\\ 0& 0& c \end{pmatrix} =c.$$
Then the set of roots $\phi : = \{\pm \alpha_1 = \pm (\delta-\epsilon_1),\, \pm \alpha_2 = \pm (\epsilon_1-\epsilon_2),\, \pm \alpha_3 = \pm (\delta-\epsilon_2) \}$. Let us choose the set of simple roots to be $\Pi = \{-\alpha_1, \alpha_3\}$. Then the set of positive roots is $\Phi^+ = \{-\alpha_1,\alpha_2 = \alpha_3 - \alpha_1, \alpha_3\}$. Therefore, we have a triangular decomposition of $\mfg = \mathfrak n^+ \oplus \mathfrak h \oplus \mathfrak n^-$, where $\mathfrak n^{\pm}: = \bigoplus_{\alpha \in \Phi^+} \mfg^{\pm \alpha}$. Let $\mathfrak b = \mathfrak h \oplus \mathfrak n^+$ be the Borel subalgebra of $\mfg$ associated to $\Pi$. 

Let $x_\alpha\in \mathfrak g^\alpha, \, \alpha \in \Phi $. Set $y_{\alpha}:= x_{-\alpha}$ and $x_{\alpha_i}:= x_i$. Note that $\mathfrak n^-:= span_{\mathbb C} \{x_1, y_2, y_3\} $ and $\mathfrak n^+:= span_{\mathbb C} \{y_1, x_2, x_3\},$ where $x_1 = E_{12}, x_2 = E_{23}, x_3 = E_{13}, y_1 = E_{21}, y_2 = E_{32}, y_{3} = E_{31}$.   

\subsection{} For $\lambda \in \mathfrak h^*$, denote $\lambda_1 = \lambda(h_1)$ and $\lambda_2 = \lambda(h_2) $ and we can identify $\lambda$ by $(\lambda_1,\lambda_2) \in \mathbb{C}^{2}$. We are interested in graded representations of $\mfg$. A representation $V$ of $\mfg$ is a $\mathbb{Z}_2$-graded vector space if $V = V_0 \oplus V_1$ such that $$\mfg_i V_j \subseteq V_{i+j}, \forall i,j \in \mathbb{Z}_2.$$ Every finite-dimensional irreducible $\mfg$-module is a highest weight module $L(\lambda)$, for some $\lambda \in \mathfrak h^*$. Let $P^+ = \{\lambda \in \mathfrak h^*| L(\lambda) \text{ is finite-dimensional} \}$.
Using the Proposition 2.5 of \cite{macedo}, we have that $$P^+ = \{\lambda \in \mathfrak h^*| \lambda_2 \in \mathbb N\} \cup \{0\}$$
corresponding to the Borel subalgebra $\mathfrak b$. 

\subsection{} \iffalse Given $\lambda \in \mathfrak h^\ast$ and fix $\mathfrak b$ to be any Borel subalgebra of $\mathfrak{g}$, let $V_{\mathfrak b}(\lambda)$ denotes the irreducible module $\mathfrak g$. Set $$P_{\mathfrak b}^+=\{ \lambda \in \mathfrak h^\ast: V_{\mathfrak b}(\lambda) \text{ is finite-dimensional }\}.$$\fi

For $\lambda \in P^+$, the generalized Kac module associated to $\lambda$ (denoted by $K(\lambda)$), is the cyclic $\mathfrak g$-module generated by homogeneous cyclic even vector $k_\lambda$ with defining relations: $$ \mathfrak n^+ k_\lambda=0, \quad hk_\lambda= \lambda(h)k_\lambda,\forall h \in \mathfrak{h} \quad y_2^{\lambda(h_2)+1} k_\lambda=0.$$
For our purpose, we have given the definition of the generalized Kac module corresponding to the Borel subalgebra $\mathfrak{b}$ but it can be defined for any Borel subalgebra of $\mathfrak{g}$ (See \cite[Definition 4.1]{bagci}). For $\lambda \in P^+$, the dimension of $K(\lambda)$ was computed in \cite{macedo}. 
%We now recall \cite[Proposition 2.5]{macedo} which gives the dimension of $K_{\mathfrak b_{(2)}}(\lambda)$ for $\lambda \in P_{\mathfrak b_{(2)}}^+.$

\begin{prop}\label{dimension of kac}
    For $\mathfrak{g}=\mathfrak{sl}(1|2)$, $\dim K(\lambda)= 4\lambda_2$ for all $\lambda \in P^+ \setminus \{0\}.$
\end{prop}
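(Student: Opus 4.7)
The plan is to bound $\dim K(\lambda)$ in both directions. For the upper bound, the super PBW theorem applied to $\mathfrak{n}^{-} = \mathbb{C} x_1 \oplus \mathbb{C} y_2 \oplus \mathbb{C} y_3$ (where $x_1, y_3$ are odd and $y_2$ is even) shows that $K(\lambda)$ is spanned by the monomials $\{x_1^a y_2^b y_3^c k_\lambda : a, c \in \{0,1\},\, b \in \mathbb{Z}_+\}$. Direct matrix computations yield the super-brackets $[y_2, x_1] = [y_2, y_3] = 0$ and $\{x_1, y_3\} = y_2$, so $y_2$ is super-central in $U(\mathfrak{n}^-)$. Combined with the defining relation $y_2^{\lambda_2+1} k_\lambda = 0$, this immediately forces $b \leq \lambda_2$ on the spanning set, leaving $4(\lambda_2 + 1)$ vectors.

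To refine this, I would apply the raising operators $y_1, x_3 \in \mathfrak{n}^+$ (which annihilate $k_\lambda$) to $y_2^{\lambda_2 + 1} k_\lambda$, whose $U(\mathfrak{g})$-orbit lies in the defining submodule and hence vanishes in $K(\lambda)$. Using $[y_1, y_2] = -y_3$ and $[x_3, y_2] = x_1$, one obtains
\begin{align*}
y_1 \cdot y_2^{\lambda_2+1} k_\lambda &= -(\lambda_2+1)\, y_2^{\lambda_2} y_3 k_\lambda, \\
x_3 \cdot y_2^{\lambda_2+1} k_\lambda &= (\lambda_2+1)\, x_1 y_2^{\lambda_2} k_\lambda,
\end{align*}
so $y_2^{\lambda_2} y_3 k_\lambda = x_1 y_2^{\lambda_2} k_\lambda = 0$ and consequently $x_1 y_2^{\lambda_2} y_3 k_\lambda = 0$, eliminating the three vectors at $b = \lambda_2$ with $(a,c) \neq (0,0)$. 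Finally, applying $y_1$ to the zero vector $x_1 y_2^{\lambda_2} k_\lambda \in K(\lambda)$ and simplifying via $\{y_1, x_1\} = h_1$ and $[y_1, y_2^n] = -n\, y_2^{n-1} y_3$ gives
\begin{equation*}
(\lambda_1 - \lambda_2)\, y_2^{\lambda_2} k_\lambda + \lambda_2\, x_1 y_2^{\lambda_2 - 1} y_3 k_\lambda = 0.
\end{equation*}
Since $\lambda_2 \geq 1$, this is a nontrivial linear dependence that removes one further vector regardless of whether $\lambda_1 = \lambda_2$ (atypical) or not (typical), yielding $\dim K(\lambda) \leq 4\lambda_2$. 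A routine check that every further $\mathfrak{n}^+$-application produces a consequence of the relations already listed confirms that no additional reductions occur.

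For the matching lower bound I would exhibit a concrete finite-dimensional $\mathfrak{g}$-module of dimension exactly $4\lambda_2$ carrying a cyclic even vector satisfying the defining relations of $K(\lambda)$; universality of $K(\lambda)$ then furnishes a surjection and hence $\dim K(\lambda) \geq 4\lambda_2$. A natural candidate is built on the vector space with basis $\{x_1^a y_2^b y_3^c k_\lambda : a, c \in \{0,1\},\, 0 \leq b \leq \lambda_2 - 1\}$, with the action of each generator determined by the bracket identities verified above; alternatively, such a model can be realized as a suitable subquotient of a tensor product of a fundamental representation with an irreducible $\mathfrak{sl}_2$-module, mirroring the approach in \cite{macedo}. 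The principal obstacle is precisely this lower bound: the upper bound reduces to a finite list of super-bracket computations in the defining matrix realization of $\mathfrak{sl}(1|2)$, whereas verifying that the candidate action on the $4\lambda_2$-dimensional space extends consistently from $\mathfrak{n}^-$ to all of $\mathfrak{g}$ requires careful bookkeeping of the odd brackets on each putative basis vector.
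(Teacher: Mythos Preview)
The paper does not prove this proposition in the text: it is stated with the remark that the dimension was computed in \cite{macedo}, and it is then used as a black box (e.g.\ in the dimension counts of \secref{sec4}). So there is no argument in the paper to compare your proposal against.

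On its own merits, your upper-bound argument is correct. The bracket identities $[y_2,x_1]=[y_2,y_3]=0$, $\{x_1,y_3\}=y_2$, $[y_1,y_2]=-y_3$, $[x_3,y_2]=x_1$, and $\{y_1,x_1\}=h_1$ all check out in the matrix realization, and the chain of applications of $y_1$ and $x_3$ to $y_2^{\lambda_2+1}k_\lambda$ and then to $x_1y_2^{\lambda_2}k_\lambda$ does produce exactly four independent relations beyond the PBW span, giving $\dim K(\lambda)\le 4\lambda_2$.

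The gap is precisely the one you flag yourself: the lower bound is only a plan, not a proof. The sentence ``a routine check that every further $\mathfrak n^+$-application produces a consequence of the relations already listed'' is not by itself a lower bound---it would only confirm that $U(\mathfrak n^+)\,y_2^{\lambda_2+1}k_\lambda$ is spanned by the four vectors you found, but the submodule to be quotiented out is $U(\mathfrak g)\,y_2^{\lambda_2+1}k_\lambda = U(\mathfrak n^-)\,U(\mathfrak n^+)\,y_2^{\lambda_2+1}k_\lambda$, and you must still argue that applying $U(\mathfrak n^-)$ to those four vectors introduces no new relations among the remaining $4\lambda_2$ monomials. Your alternative route (exhibit an explicit $4\lambda_2$-dimensional module with the required cyclic vector) is the cleaner way to close this, and is essentially what \cite{macedo} does; but as written the proposal stops short of carrying it out.
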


\subsection{} For a Lie superalgebra $\mfg$, the current Lie superalgebra $\mfg[t]: = \mfg \otimes \mathbb{C}[t]$, where $\mathbb{C}[t]$ denotes the polynomial ring in indeterminate $t$. $\mfg[t]$ is a Lie superalgebra with the following Lie bracket $$[a\otimes t^r, b\otimes t^s] = [a,b]\otimes t^{r+s}, \quad \forall a,b \in \mfg, r,s \in \mathbb{Z}_+$$ where the $\mathbb{Z}_2$-grading is given by $(\mfg \otimes \mathbb{C}[t])_j = \mfg_j \otimes \mathbb{C}[t], \forall j\in \mathbb{Z}_2$. Note that $0=[y_3, y_3]= 2 y_3^2$ and $0=[x_1,x_1] = 2x_1^2$ then we have 
%but $y_3 = E_{13}$ implies that $2y_3^2=[y_3, y_3]=0$. Thus, $y_3^2=0$, that is,
\begin{equation} \label{ysqre equation}
    (y_3 \otimes t^a)^2=0, \quad (x_1 \otimes t^a)^2=0,\quad \forall a \geq 0.
\end{equation}
Let $U(\mathfrak{g}[t])$ denote the universal enveloping algebra corresponding to $\mathfrak{g}[t]$. The natural grading on $\mathbb{C}[t]$ defines a $\mathbb{Z}_{+}$-grading on $U(\mathfrak{g}[t])$. With respect to it, every homogeneous element of $U(\mathfrak{g}[t])$ of degree $k$ is a linear combination of elements of the form $(x_{1} \otimes t^{a_{1}})(x_{2} \otimes t^{a_{2}}) \dots (x_{r} \otimes t^{a_{r}}), \,x_i \in \mathfrak{g}, a_i\in \mathbb Z_+,\, 1\leq r \leq k$ with $a_{1}+a_{2}+ \dots + a_{r}=k$. This grading induces a filtration on any cyclic $\mathfrak{g}[t]$-module.

\subsection{}A graded representation of $\mathfrak g[t]$ is a 
$\mathbb Z_+$-graded vector space 
$V= \underset{r\in \mathbb Z_+}{\bigoplus} V[r]$ such that 
$$ \bu(\mathfrak g[t])[s].V[r]\subseteq V[r+s], 
\quad \forall\, r,s\in \mathbb Z_+.$$ If $U, V$ are two graded representations 
of $\mathfrak g[t]$, we say $\psi: U\rightarrow V$ is a  graded 
$\mathfrak g[t]$-module morphism if  $\psi(U[r])\subseteq V[r]$ for all 
$r\in \mathbb Z_+$. For $s\in \mathbb  Z_+$, let $\tau_s$ be the grade-shifting
operator whose action on a graded representation $V$ of $\mathfrak g[t]$ is 
given by $$\tau_s(V)[k] = V[k+s], \quad  \forall\,  k\in \mathbb Z_+.$$  
Let $q$ be an indeterminate. In a graded $\mathfrak g[t]$-module $V$, given by 
$V= \underset{r\in \mathbb Z_+}{\bigoplus} V[r]$, each graded piece $V[r]$ is a
$\mathfrak g$-module. The graded character of $V$ is then defined as 
$$ \ch_{gr} V = \sum\limits_{r \geq 0} q^r \ch_{\mathfrak g} V[r] .$$

\subsection{} Given any $\mathfrak g$ module $V$ and a fixed $z \in \mathbb C$, let $V^z$ be associated evaluation $\mathfrak g[t]$-module with underlying vector space $V$ and the action is defined as $(x \otimes t^k)v= z^k x.v$ for all $x \in \mathfrak g, \, v \in V$ and $k\in \mathbb Z_+.$ If $V_1, V_2, \cdots ,V_n$ are cyclic $\mathfrak{g}$-modules with cyclic homogeneous even generators $v_1, v_2, \cdots, v_n$ respectively and $z_1, z_2, \cdots, z_n$ are distinct complex numbers then $V_1^{z_1} \otimes \cdots \otimes V_n^{z_n}$ is a cyclic $\mathfrak g[t]$-module with cyclic homogeneous even generator $v_1 \otimes \cdots \otimes v_n$. For each $s\in \mathbb Z_+$, define a filtration on this cyclic module by $$V_1^{z_1} \otimes \cdots \otimes V_n^{z_n}[s]= \bigoplus\limits_{0 \leq r \leq s} U(\mathfrak g[t])[r](v_1 \otimes \cdots \otimes v_n).$$ 
The associated graded $\mathfrak g[t]$ module, $$V_1^{z_1} \otimes \cdots \otimes V_n^{z_n}[0] \oplus \bigoplus\limits_{s \in \mathbb N} \dfrac{V_1^{z_1} \otimes \cdots \otimes V_n^{z_n}[s]}{V_1^{z_1} \otimes \cdots \otimes V_n^{z_n}[s-1]}$$ is called the fusion product of $V_1, \cdots, V_n$ with fusion parameters $z_1, \cdots z_n$ and we denote this by $V_1^{z_1} \ast \cdots \ast V_n^{z_n}.$

\iffalse \subsection{}
For any finite-dimensional $\mathfrak g_0$-module $V^0(\lambda)$, it can be extended to $\mathfrak g$ module by taking $\mathfrak g_1. V^0(\lambda)=0$. This induced module is denoted as $$\bar{V}(\lambda)=Ind_{\mathfrak g_0}^{\mathfrak g} V^0(\lambda) .$$ 
%If $\bar{V}(\lambda)$ is a highest weight $\mathfrak{g}$-module generated by $v_{\lambda}$ then it can also be considered as a $\mathfrak{g}[t]$-module where $(x\otimes t^r)$
Given a $\mathfrak{g}$-module $U$, we can regard $U$ as a $\mathfrak{g}[t]$-module via the map $ev_0 :\mathfrak{g}[t]\rightarrow \mathfrak{g}$ given by $x\otimes t^r \mapsto \delta_{r,0}x$. Let $ev_0(U)$ denote the corresponding evaluation module for $\mathfrak{g}[t]$. Clearly, it is a graded $\mathfrak{g}[t]$-module with $ev_0(U)[0] = U$ and $ev_0(U)[r] = 0$ for $r > 0$.\fi 
\section{A basis for local Weyl modules}\label{sec3}
Fix the Lie superalgebra $\mathfrak{g}= \mathfrak{sl}(1|2)$ and the current Lie superalgebra $\mfg[t] = \mathfrak{sl}(1|2)[t]$ for the rest of the paper. In this section, we describe a new parametrization of the basis of the local Weyl module given in \cite{macedo}.
\subsection{}Let us recall the definition of the local Weyl module for $\mfg[t]$ given in \cite{savage}. 
\begin{defn} \label{loc def}
    Let $\psi \in (\mathfrak{h}[t])^*$ such that $\psi({h_2}) \in \mathbb{Z}_{+}$, define the local Weyl module $W(\psi)$ to be a highest weight $\mfg[t]$-module generated by a vector $w_{\psi}$ with the following defining relations $$\mathfrak{n}^{+}[t] w_{\psi} = 0, \quad hw_{\psi} = \psi(h)w_{\psi}, \, \forall h\in \mathfrak{h}[t], \quad (y_2 \otimes 1)^{\psi(h_2)+1}w_{\psi}=0.$$
\end{defn}
$W(\psi)$ is not necessarily a graded local Weyl module, but its associated graded module (denoted by $Gr_{w_\psi} W(\psi)$) is isomorphic to a graded local Weyl module.
$W(\psi)$ is a finite-dimensional $\mathfrak{g}[t]$-module if $\psi|_{\mathfrak h} \in P^+$ (See \cite{bagci}) and we focus on finite-dimensional graded local Weyl modules. 
\begin{rem} [See {\cite[Theorem 3.8]{macedo}}] Let $\psi \in \mathfrak{h}[t]^\ast$ such that $\psi|_\mathfrak h \in P^+$, if there exists $\mu_1, \mu_2, \ldots, \mu_n \in P^+$ and pairwise distinct $z_1,z_2,\cdots,z_s \in \mathbb{C}$ such that $\psi(h\otimes t^p) = z_1^p \mu_1(h) + \cdots+ z_s^p \mu_s(h), \forall h \in \mathfrak h, p\geq 0$ then $W(\lambda) \cong Gr_{w_\psi} W(\psi)$ where $\lambda := \psi|_\mathfrak h.$
\end{rem}

\subsection{} We can consider $\lambda \in P^+$ as an element of $(\mathfrak{h}[t])^*$ by defining $\lambda(h\otimes t^k) = \delta_{0,k}\lambda(h), \forall h \in \mathfrak h, k\geq 0$. Here, $W(\lambda)$ inherits a grading from $\bu(\mfg[t])$ and it is called graded local Weyl module.
For $u\in \bu(\mfg[t])$ and $k \in \mathbb{Z}_{+}$, define $u^{(k)}: = u^k/k!$. Given $r,s \in \mathbb{Z}_+$, define \begin{equation}
    y_{2}(r,s) = \sum_{(b_p)_{p\in \mathbb Z_+}} (y_2 \otimes 1)^{(b_0)}(y_2 \otimes t)^{(b_1)} \cdots (y_2 \otimes t^s)^{(b_s)} 
\end{equation}
where $p,b_p \in \mathbb{Z}_{+}$ are such that $\sum_{p=0}^s b_p = r,\, \sum_{p=0}^s pb_p = s$. The following result was proved in \cite{garland} and reformulated in the present form in \cite{chari-pressley}.
\begin{lem}
    Given $r,s \in \mathbb{Z}_+$, we have $$(x_2 \otimes t)^{(s)}(y_2 \otimes 1)^{(r+s)} -(-1)^s y_2 (r,s) \in \bu(\mfg[t])(\mathfrak{n}^{+}[t] \oplus t\mathfrak{h}[t])$$
\end{lem}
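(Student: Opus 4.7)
The plan is to recognize the stated congruence as the classical Garland identity for $\mathfrak{sl}_2[t]$, pulled back along an inclusion into $U(\mathfrak{g}[t])$. The relevant observation is that $x_2 = E_{23}$ and $y_2 = E_{32}$ both lie in $\mathfrak{g}_0$ (the even part of $\mathfrak{g}$), and together with $h_2 = [x_2,y_2] = E_{22}-E_{33}$ they span a copy of the ordinary Lie algebra $\mathfrak{sl}_2$ inside $\mathfrak{g}_0$. Consequently $\mathfrak{s} := \mathrm{span}_{\mathbb{C}}\{x_2,h_2,y_2\} \otimes \mathbb{C}[t]$ is a Lie subalgebra of $\mathfrak{g}[t]$ isomorphic to $\mathfrak{sl}_2[t]$, and the canonical map $U(\mathfrak{s}) \hookrightarrow U(\mathfrak{g}[t])$ is an inclusion of associative algebras.

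Next, I would check that the quantity on the left of the claim actually belongs to $U(\mathfrak{s})$. Every factor appearing in $(x_2\otimes t)^{(s)} (y_2\otimes 1)^{(r+s)}$ as well as every summand $(y_2\otimes 1)^{(b_0)} (y_2\otimes t)^{(b_1)} \cdots (y_2\otimes t^s)^{(b_s)}$ of $y_2(r,s)$ is a divided power of an even element of $\mathfrak{s}$, so all the relevant products lie in $U(\mathfrak{s})$ and the super commutation rule $[a,b] = ab - (-1)^{\bar{i}\bar{j}}ba$ reduces to the ordinary Lie bracket throughout. Applying the result of \cite{garland} (in the reformulation of \cite{chari-pressley}) to the current algebra $\mathfrak{s} \cong \mathfrak{sl}_2[t]$ then yields
\begin{equation*}
(x_2\otimes t)^{(s)}(y_2\otimes 1)^{(r+s)} - (-1)^{s} y_2(r,s) \;\in\; U(\mathfrak{s})\bigl((\mathbb{C}x_2 \otimes \mathbb{C}[t]) \oplus (\mathbb{C}h_2 \otimes t\mathbb{C}[t])\bigr).
\end{equation*}

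To conclude, I would observe that $x_2 \in \mathfrak{n}^+$ by the choice of positive system in Section 2.2, so $\mathbb{C}x_2 \otimes \mathbb{C}[t] \subseteq \mathfrak{n}^+[t]$, and $h_2 \in \mathfrak{h}$ so $\mathbb{C}h_2 \otimes t\mathbb{C}[t] \subseteq t\mathfrak{h}[t]$. Combined with $U(\mathfrak{s}) \subseteq U(\mathfrak{g}[t])$, this places the difference inside $U(\mathfrak{g}[t])(\mathfrak{n}^+[t] \oplus t\mathfrak{h}[t])$, as required. The only point that deserves mild care — and is the closest thing to an obstacle — is bookkeeping the parities to ensure that no Koszul sign twists intrude when the super universal enveloping algebra is specialized to $\mathfrak{s}$; since $x_2,h_2,y_2$ are all even this reduces to a routine check.
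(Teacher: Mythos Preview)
Your proposal is correct and takes essentially the same approach as the paper: the paper does not give its own proof of this lemma but simply cites \cite{garland} and its reformulation in \cite{chari-pressley}, and your argument makes explicit the (routine) reduction to that $\mathfrak{sl}_2[t]$ result via the even $\mathfrak{sl}_2$-triple $\{x_2,h_2,y_2\}\subset \mathfrak{g}_0$.
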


\subsection{} Let us recall a basis of the local Weyl module $W(\psi)$, $\psi \in (\mathfrak{h}[t])^*$ given in \cite{macedo} (Corollary 3.9).
\begin{prop}\label{local_basis}
    Let $\psi \in (\mathfrak{h}[t])^*$, and assume that there exist $\mu_1,\mu_2,\cdots,\mu_s \in P^+$ and pairwise distinct $z_1,z_2,\cdots,z_s \in \mathbb{C}$ such that $\psi(h\otimes t^p) = z_1^p \mu_1(h) + \cdots+ z_s^p \mu_s(h), \forall h \in \mathfrak h, p\geq 0$.\\
    (a) The elements \begin{equation}\label{basis of local}(y_2 \otimes t^{a_1}) \cdots (y_2 \otimes t^{a_j}) (x_1 \otimes t^{b_1})\cdots (x_1 \otimes t^{b_k}) (y_3 \otimes t^{c_1}) \cdots (y_3 \otimes t^{c_l})w_{\psi},\end{equation} such that $0\leq c_1 < \cdots < c_l \leq \psi(h_2)-1,\, 0 \leq b_1 < \cdots <b_k \leq \psi(h_2)-l-1,\, 0 \leq a_1 \leq \cdots \leq a_j \leq \psi(h_2)-l-k-j$, forms a basis of $W(\psi)$.\\
    (b) The character of $W(\psi)$ is given by $$\ch W(\psi) = e^{(\psi(h_1),0)}\big(e^{(-1,-1)}+e^{(-1,0)}+e^{(0,0)}+e^{(0,1)}\big)^{\psi(h_2)}.$$ \qed
\end{prop}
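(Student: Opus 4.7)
My plan is to prove part (a) in two steps---a PBW--Garland spanning argument and a matching dimension bound via a fusion product target---and then to deduce (b) by computing the weights of the basis elements from (a).

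For spanning, I would invoke the PBW theorem for $\bu(\mfg[t])$. Since $\mathfrak{n}^{+}[t]w_{\psi}=0$ and $\mathfrak{h}[t]$ acts on $w_\psi$ by $\psi$, every vector in $W(\psi)$ is a $\mathbb{C}$-linear combination of ordered monomials in the negative loop generators $(y_2 \otimes t^a)$, $(x_1 \otimes t^b)$, $(y_3 \otimes t^c)$ applied to $w_\psi$. The non-trivial super-commutator here is $[x_1 \otimes t^b,\, y_3 \otimes t^c] = y_2 \otimes t^{b+c}$ (while $[y_2,x_1]=[y_2,y_3]=0$), so reorderings into the form of the statement introduce additional $y_2$-terms, which are absorbed inductively on total degree. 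The square-zero identities \eqref{ysqre equation} force $(x_1 \otimes t^b)^2 = (y_3\otimes t^c)^2 = 0$, so the $b_i$'s and $c_i$'s must be strictly increasing, while the evenness of $y_2$ gives only the weakly increasing condition on the $a_i$'s.

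To extract the bounds $c_l \le N-1$, $b_k \le N-l-1$, $a_j \le N-l-k-j$ (with $N = \psi(h_2)$), I would apply the Garland identity of the Lemma to the defining relation $(y_2 \otimes 1)^{N+1}w_\psi = 0$. Acting by $(x_2 \otimes t)^{(s)}$ and using that $\bu(\mfg[t])\bigl(\mathfrak{n}^{+}[t]+t\mathfrak{h}[t]\bigr)$ kills $w_\psi$ yields $y_2(N+1,s)w_\psi = 0$ for every $s\ge 0$; this rewrites any $y_2$-string with $a_j > N-l-k-j$ as a combination of admissible ones. The bounds on $b_k$ and $c_l$ are propagated from the $a$-bound by inserting a $y_3$ or an $x_1$ into the monomial and bracketing through $[x_3,y_3]=h_3$ or $[y_1,x_1]=h_1$: each such insertion consumes one unit of the $a$-range, accounting for the shifts $-l$ and $-k$ in the respective ceilings.

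For linear independence, I would match the cardinality of the admissible set to the dimension of an explicit target. The admissible set has size $\sum_{j+k+l\le N}\binom{N}{l,k,j,N-l-k-j}=4^N$. A natural target is a fusion product of $N$ ``fundamental'' Kac modules $K((c_i,1))^{z_i}$ with $\sum_i c_i = \psi(h_1)$ and evaluation parameters $z_i$ compatible with the support data of $\psi$; each factor has dimension $4$ by \propref{dimension of kac}, so the fusion has dimension $4^N$. A surjection from $W(\psi)$ onto this target, combined with the spanning upper bound, forces the monomials of (a) to form a basis. Part (b) then follows by a weight calculation: the monomial with parameters $(a_\bullet;b_\bullet;c_\bullet)$ carries weight $(\psi(h_1)-j-k,\,N-2j-k-l)$, and counting admissible monomials per weight reproduces the coefficient of the corresponding term in $e^{(\psi(h_1),0)}(e^{(-1,-1)}+e^{(-1,0)}+e^{(0,0)}+e^{(0,1)})^N$. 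The main obstacle is the Garland transfer step---propagating the $a_j$-bound to the bounds on $b_k$ and $c_l$ while carefully tracking super-commutator signs.
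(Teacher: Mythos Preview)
The paper does not give its own proof of this proposition: it is quoted verbatim from \cite{macedo} (their Corollary~3.9) and marked with \qed. Your sketch is nonetheless the standard strategy, and almost certainly the one used there: a PBW/Garland spanning argument to bound $\dim W(\psi)$ above by $4^{\psi(h_2)}$, and a surjection onto a fusion product of $\psi(h_2)$ four-dimensional Kac modules to bound it below.

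Two points need correction. First, $t\mathfrak h[t]$ does \emph{not} kill $w_\psi$ in general: by hypothesis $\psi(h\otimes t^p)=\sum_i z_i^p\mu_i(h)$ can be nonzero. So the Garland lemma does not directly yield $y_2(r,s)w_\psi=0$; the terms in $\bu(\mfg[t])\,t\mathfrak h[t]$ contribute lower-degree $y_2$-monomials times scalars. The clean fix is to run the spanning argument in the graded local Weyl module $W(\lambda)$ (where $\lambda=\psi|_{\mathfrak h}$ extended by zero on $t\mathfrak h[t]$, so that $t\mathfrak h[t]$ genuinely annihilates $w_\lambda$) and then transfer to $W(\psi)$ via $W(\lambda)\cong\mathrm{Gr}_{w_\psi}W(\psi)$, as recorded in the Remark following Definition~\ref{loc def}. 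Second, your weight formula is off: since $\alpha_1(h_1)=0$ and $\alpha_3(h_1)=1$, it is $y_3$ (not $x_1$) that lowers the $h_1$-weight, so the monomial has weight $(\psi(h_1)-j-l,\ N-2j-k-l)$, not $(\psi(h_1)-j-k,\ N-2j-k-l)$; compare Proposition~\ref{graded character formula}, where the exponent of $u_1$ is $\psi(h_1)-l-j$. With these two fixes your outline is correct.
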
 
The character formula given in part(b) of \propref{local_basis} is a character formula $\mathfrak{g}$, but using part (a) of \propref{local_basis} we will provide a graded character formula for the local Weyl module $W(\psi)$ under the assumptions of \propref{local_basis}. In the following graded character formula, for any weight $e(\mu)$, $\mu= (\mu_1, \mu_2)$ we write the notation $u_1^{\mu_1} u_2^{\mu_2}$ where $u_1, u_2$ are indeterminate.
\begin{prop}\label{graded character formula}
    Let $\psi \in (\mathfrak{h}[t])^*$, and assume that there exist $\mu_1,\mu_2,\cdots,\mu_s \in P^+$ and pairwise distinct $z_1,z_2,\cdots,z_s \in \mathbb{C}$ such that $\psi(h\otimes t^p) = z_1^p \mu_1(h) + \cdots+ z_s^p \mu_s(h),\, \forall h \in \mathfrak h, p\geq 0$ then $$\ch_{gr}(W(\psi)) = \sum_{l=0}^n q^{\frac{l(l-1)}{2}} {n \choose l}_{q}\sum_{k=0}^{n-l} q^{\frac{k(k-1)}{2}} {n-l \choose k}_{q}\sum_{j=0}^{n-k-l} {n-k-l \choose j}_{q}u_1^{\psi(h_1)-l-j} u_2^{n-l-k-2j}$$ where $n=\psi(h_2)$.
\end{prop}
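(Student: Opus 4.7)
The plan is to read the graded character directly off the monomial basis supplied by \propref{local_basis}(a), since the grading on $U(\mathfrak g[t])$ is additive on the generators $x\otimes t^a$, and weights are additive under the adjoint action. Concretely, the basis vector indexed by $(a_1,\dots,a_j;b_1,\dots,b_k;c_1,\dots,c_l)$ has degree $\sum a_i+\sum b_i+\sum c_i$, and its $\mathfrak h$-weight is the highest weight $(\psi(h_1),n)$ shifted by the weights of the operators applied.

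First I would record the three relevant weight shifts. From the root data, $y_2\in \mathfrak g^{-\alpha_2}$ with $(\alpha_2(h_1),\alpha_2(h_2))=(1,2)$, $x_1\in\mathfrak g^{\alpha_1}$ with $(\alpha_1(h_1),\alpha_1(h_2))=(0,-1)$, and $y_3\in\mathfrak g^{-\alpha_3}$ with $(\alpha_3(h_1),\alpha_3(h_2))=(1,1)$. Thus a basis vector with parameters $(l,k,j)$ carries weight $\bigl(\psi(h_1)-l-j,\,n-l-k-2j\bigr)$, contributing the monomial $u_1^{\psi(h_1)-l-j}u_2^{n-l-k-2j}$.

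Next I would collect the degree generating function by grouping over $(l,k,j)$ and evaluating each inner sum via a standard $q$-binomial identity. Specifically,
\begin{align*}
\sum_{0\le c_1<\cdots<c_l\le n-1} q^{c_1+\cdots+c_l}&=q^{\binom{l}{2}}\binom{n}{l}_q,\\
\sum_{0\le b_1<\cdots<b_k\le n-l-1} q^{b_1+\cdots+b_k}&=q^{\binom{k}{2}}\binom{n-l}{k}_q,\\
\sum_{0\le a_1\le\cdots\le a_j\le n-l-k-j} q^{a_1+\cdots+a_j}&=\binom{n-l-k}{j}_q.
\end{align*}
The first two are the well-known strict-increasing case; the third follows from the standard bijection $a_i\mapsto a_i+(i-1)$ which converts the weakly increasing sum into a strictly increasing sum in $\{0,\dots,n-l-k-1\}$, after which the extra $q^{\binom{j}{2}}$ factor cancels with the shift.

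Multiplying these three factors together and multiplying by the weight monomial $u_1^{\psi(h_1)-l-j}u_2^{n-l-k-2j}$, then summing $l$ from $0$ to $n$, $k$ from $0$ to $n-l$, and $j$ from $0$ to $n-l-k$, produces exactly the stated expression. There is no real obstacle: the bookkeeping of weights and the identification of the three $q$-binomial generating functions are both routine, and \propref{local_basis}(a) has already done the substantive work of proving the PBW-style monomials form a basis.
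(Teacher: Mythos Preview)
Your proposal is correct and follows exactly the same strategy as the paper: read the graded character off the monomial basis of \propref{local_basis}(a), compute the weight shift from the root data, and evaluate each of the three inner sums via the standard $q$-binomial identities. Your write-up is in fact more explicit than the paper's (which states the three partial sums without justifying the identities or the weight computation), but the underlying argument is identical.
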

\begin{proof}
  The graded character of the set of vectors given in \eqref{basis of local} with fixed $b_1,b_2, \cdots, b_k$ and $c_1, c_2,\cdots, c_l$ is $\sum_{j=0}^{n-k-l} {n-k-l \choose j}_q u_1^{\psi(h_1)-l-j}u_2^{n-l-k-2j}$. Now the graded character of the space $$(x_1 \otimes t^{b_1})\cdots(x_1 \otimes t^{b_k}) (y_3 \otimes t^{c_1}) \cdots (y_3 \otimes t^{c_l})w_{\psi}, \quad 0\leq c_1 < \cdots < c_l \leq n-1,\, 0 \leq b_1 < \cdots <b_k \leq n-l-1, $$ is equal to $\sum_{k=0}^{n-l}q^{\frac{k(k-1)}{2}}{n-l \choose k}_q$ for fixed $c_1, c_2, \cdots, c_l$. Similarly, the graded character of the space $(y_3 \otimes t^{c_1}) \cdots (y_3 \otimes t^{c_l})w_{\psi}, \quad 0\leq c_1 < \cdots < c_l \leq n-1,$ is equal to $\sum_{l=0}^{n}q^{\frac{l(l-1)}{2}}{n \choose l}_q$. Hence, the result holds. 
\end{proof}
For $\psi \in (\mathfrak{h}[t])^*$ such that it satisfies the condition given in \propref{local_basis}, define a set \begin{equation} \label{b(lambda)-set} \begin{split}
    B(\psi):= & \{(a_1,\cdots,a_j,b_1,\cdots,b_k,c_1,\cdots,c_l): 0\leq c_1 < \cdots < c_l \leq \psi(h_2)-1,\\
    & 0 \leq b_1 < \cdots <b_k \leq \psi(h_2)-l-1,\, 0 \leq a_1 \leq \cdots \leq a_j \leq \psi(h_2)-l-k-j\} \end{split}
\end{equation} 
The cardinality of this set $|B(\psi)| = 4^{\psi(h_2)}$. 

\subsection{}  In \cite{pop}, authors gave a new parametrization of the basis of local Weyl modules for current Lie algebra $\mathfrak{sl}_{n+1}[t]$ through POPs (Partition Overlaid Patterns). We begin by recalling the definition of POPs, followed by their main result on the parametrization of the basis of the local Weyl module in the case of \(\mathfrak{sl}_2[t]\).
 \begin{defn} A partition overlaid pattern (POP for short) consists of an integral Gelfand Tsetlin pattern with bounding sequence $n_1 \geq n_2 $, i.e., a pattern $\begin{array}{ccc}
   & m  &  \\
  n_1 &  &n_2   
 \end{array}$ with  $n_1 \geq m \geq n_2 \geq 0$ and a partition that fits into the rectangle with sides $(n_1-m, m-n_2)$, i.e., a partition that has atmost $n_1-m$ parts and each part has length at most $m-n_2.$     
 \end{defn}
 Note that a partition that fits into the rectangle with sides $(a, b)$ means a partition contained in a rectangular array of square boxes, with $b$ square boxes in each of $a$ rows. For example, a partition fits into rectangle with sides $(2,3)$ means a partition contained in \begin{ytableau}
{}   & &  \\
 & &  
\end{ytableau}, it can be any one of partition among $\phi$, $(1)$, $(1,1)$, $(2)$, $(2,1)$, $(2,2)$, $(3)$, $(3,1)$, $(3,2)$, $(3,3)$.
 
 \begin{q} A partition overlay the pattern $\begin{array}{ccc}
   & 1  &  \\
  3 &  & 0   
 \end{array}$ is contained in the rectangle $(2,1)$. The possible partitions are $\phi, (1), (1,1)$. So $\left(\begin{array}{ccc}
   & 1  &  \\
  3 &  & 0   
 \end{array}; \phi\right) $, $\left(\begin{array}{ccc}
   & 1  &  \\
  3 &  & 0   
 \end{array}; (1)\right)$ and $\left(\begin{array}{ccc}
   & 1  &  \\
  3 &  & 0   
 \end{array}; (1,1)\right)$ are POPs.
 \end{q}

 \medskip 

 Let $\mathfrak{P}= \left( \begin{array}{ccc}
   & m  &  \\
  n_1 &  &n_2   
 \end{array}, \underline{\pi} \right)$ be a POP with bounding sequence $\underline{\lambda}: n_1 \geq n_2$ and $\underline{\pi}$ be a partition that fits into the rectangle with sides $(n_1-m, m-n_2)$. Denote by $\underline{\pi}(k)$ the number of parts of the partition that are equal to $k$ for $1 \leq k \leq m-n_2$ and set $\underline{\pi}(0):= n_1-m-\sum\limits_{k=1}^{m-n_2} \underline{\pi}(k)$. For root $\alpha$ of $\mathfrak{sl}_2$, define \begin{equation}
     \rho_{\mathfrak{P}}^{\alpha}:= (y_{\alpha} \otimes 1)^{\underline{\pi}(0)}. (y_{\alpha} \otimes t)^{\underline{\pi}(1)} . \ldots (y_{\alpha} \otimes t^{m-n_2})^{\underline{\pi}(m-n_2)}.\end{equation}\label{basismonomial} 
$$ v_{\mathfrak{P}}^{\alpha}:= \rho_{\mathfrak{P}}^{\alpha}. w_\lambda$$
    
   %  $$v_{\mathfrak{P}}^\alpha:= (y_\alpha \otimes 1)^{\underline{\pi}(0)}. (y_\alpha \otimes t)^{\underline{\pi}(1)} . (y_\alpha \otimes t^{2})^{\underline{\pi}(2)}. \ldots (y_\alpha \otimes t^{m-n_2})^{\underline{\pi}(m-n_2)}.w_\lambda$$
 
 We recall the basis of local Weyl modules given in \cite{pop} in our notation in the case of $\mathfrak{sl}_{2,\alpha}[t].$
\begin{thm} (See \cite[Theorem 4.3]{pop})  The elements $v_{\mathfrak{P}}^{\alpha}$, as $\mathfrak{P}$ varies over all POPs with bounding sequence $n \geq 0$, forms a basis of $W_{loc}(n \omega)$, local Weyl module of current Lie algebra $\mathfrak{sl}_{2,\alpha}[t]$.    
\end{thm}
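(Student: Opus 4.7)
The plan is to identify the vectors $v_{\mathfrak{P}}^{\alpha}$ with the classical Chari--Pressley basis of $W_{loc}(n\omega)$ for $\mathfrak{sl}_2[t]$ via an explicit bijection between POPs and the indexing set of that basis. A POP with bounding sequence $n \geq 0$ is specified by an integer $m \in \{0, 1, \ldots, n\}$ together with a partition $\underline{\pi}$ contained in the $(n-m) \times m$ rectangle. By the standard lattice-path correspondence, the number of such partitions for fixed $m$ is $\binom{n}{m}$, so the total count of POPs is $\sum_{m=0}^{n} \binom{n}{m} = 2^n$, matching the known value $\dim W_{loc}(n\omega) = 2^n$ and making the desired bijection at least numerically plausible.

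Next I would construct the bijection explicitly. Given a POP, list $\underline{\pi}(k)$ copies of $k$ for $k = 0, 1, \ldots, m$ in weakly increasing order to obtain a sequence $0 \leq a_1 \leq \cdots \leq a_{n-m} \leq m$. Writing $r := n - m$, this is a weakly increasing sequence of length $r$ with $a_r \leq n - r$, which is precisely the index set for the $\mathfrak{sl}_2$-analogue of the basis in Proposition~\ref{local_basis} (taking only $y_2$-generators, i.e.\ $k = l = 0$ in that notation). Because $y_\alpha \otimes t^i$ and $y_\alpha \otimes t^j$ commute as even generators of $\mathfrak{sl}_{2,\alpha}[t]$,
\[
(y_\alpha \otimes 1)^{\underline{\pi}(0)} (y_\alpha \otimes t)^{\underline{\pi}(1)} \cdots (y_\alpha \otimes t^m)^{\underline{\pi}(m)} \, w_{n\omega} = (y_\alpha \otimes t^{a_1})(y_\alpha \otimes t^{a_2}) \cdots (y_\alpha \otimes t^{a_r})\, w_{n\omega},
\]
so $v_{\mathfrak{P}}^{\alpha}$ coincides with the Chari--Pressley basis vector indexed by $(a_1, \ldots, a_r)$. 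The assignment $\mathfrak{P} \mapsto v_{\mathfrak{P}}^{\alpha}$ is therefore a bijection onto that basis, which settles the theorem.

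Taking the Chari--Pressley basis theorem as input makes this a routine bijection; the real work has already been done there. If one wanted a self-contained proof, the main obstacle would be spanning, i.e.\ showing every element of $W_{loc}(n\omega)$ lies in the span of the $v_{\mathfrak{P}}^{\alpha}$. The standard route uses Garland's identity together with the defining relation $(y_\alpha \otimes 1)^{n+1} w_{n\omega} = 0$ and induction on the total $t$-degree, reducing an arbitrary PBW monomial in $\{y_\alpha \otimes t^k\}_{k \geq 0}$ to a combination of the prescribed shapes; linear independence then drops out of the dimension count $2^n$.
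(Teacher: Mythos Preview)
Your proposal is correct. The paper does not actually supply a proof of this statement---it is cited verbatim from \cite{pop}---so there is no in-paper argument to compare against directly. That said, your strategy of exhibiting an explicit bijection between POPs with bounding sequence $n\geq 0$ and the Chari--Loktev indexing set $\{0\leq a_1\leq\cdots\leq a_r\leq n-r\}$, and checking that the associated monomials coincide, is precisely the template the paper follows in proving its own super analogue (\thmref{POP thm}), where the same bijection is carried out between super POPs and the set $B(\psi)$ of \propref{local_basis}.
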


\subsection{} For $n \in \Z_+$, $M_{2 \times n}(\mathbb Z_2)=\left\lbrace A = \begin{bmatrix}
    a_{11} & a_{12} & \cdots & a_{1n}\\
    a_{21} & a_{22} & \cdots & a_{2n}
\end{bmatrix}: a_{ij} \in \mathbb Z_2, \forall \,1 \leq i \leq 2, \, 1 \leq j \leq n\right\rbrace$  denotes the matrices of order $2 \times n$ with entries from $\mathbb Z_2$. Denote the sum of all entries of the matrix $A$ by $s(A)$, i.e., $s(A)= \sum\limits_{j=1}^{n} (a_{1j}+a_{2j}).$ We can attach a monomial $x_A \in U(\mathfrak{sl}(1|2)[t])$ to each element $A \in M_{2 \times n}(\mathbb Z_2)$ in the following way:
\begin{equation}
    x_A:= \prod\limits_{j=1}^{n}(x_1 \otimes t^{j-1})^{a_{1j}} \prod\limits_{j=1}^{n}(y_3 \otimes t^{j-1})^{a_{2j}}.\end{equation}\label{supermonomial}

Consider a subset \begin{equation}
    \mathcal{S}_n= \left\lbrace \begin{bmatrix}
    a_{11} & a_{12} & \cdots & a_{1n}\\
    a_{21} & a_{22} & \cdots & a_{2n}
\end{bmatrix} \in M_{2 \times n}(\mathbb Z_2): a_{2j} \in \mathbb Z_2, \, a_{1k} =0, \forall \, k > n- \sum\limits_{j=1}^{n} a_{2j}  \right\rbrace . \end{equation}\label{parameterizing_set_S}

 \medskip

We define the notion of super partition overlaid pattern (super POP) for $\mathfrak{sl}(1|2)[t]$.

 \begin{defn} For $ n \in \Z_+$, a super POP consists of a matrix $A$ from $ \mathcal{S}_n$ and a POP with bounding sequence $n-s(A) \geq 0$, where $s(A)$ is the sum of all the entries of the matrix $A$.     
 \end{defn}
 \begin{q}\label{sup-pop} For $n=4$ and $A = \begin{bmatrix}
     0 & 1 & 0 & 0\\
     0  & 0 & 0 &0
 \end{bmatrix}$, the following are some super POPs with bounding sequence $3 \geq 0$ $$\left( 
 \begin{bmatrix}
     0 & 1 & 0 & 0\\
     0  & 0 & 0 &0
 \end{bmatrix}, \begin{array}{ccc}
      &  1 & \\
      3& & 0
 \end{array}; \phi
 \right),\left( 
 \begin{bmatrix}
     0 & 1 & 0 & 0\\
     0  & 0 & 0 &0
 \end{bmatrix}, \begin{array}{ccc}
      &  1 & \\
      3& & 0
 \end{array}; (1)
 \right), \left( 
 \begin{bmatrix}
     0 & 1 & 0 & 0\\
     0  & 0 & 0 &0
 \end{bmatrix}, \begin{array}{ccc}
      &  1 & \\
      3& & 0
 \end{array}; (1,1)
 \right)$$ \qed 
 \end{q}

 \medskip 

\noindent  Let $ n \in \Z_+$. To each super POP $\mathfrak{P^s}=(A, \mathfrak{P}_A)$ where $A \in \mathcal{S}_n$ and $\mathfrak{P}_A$ is a POP with bounding sequence $n-s(A) \geq 0$, let us associate a monomial $$ v_{\mathfrak{P^s}}:= \rho_{\mathfrak P_A}^{\alpha_2}. x_A \in U(\mathfrak g[t])$$ where $x_A \in U(\mathfrak g[t])$ defined in \eqref{supermonomial} for each element $A \in \mathcal{S}_n$ and $\rho_{\mathfrak P}^{\alpha_2}$ as defined in \eqref{basismonomial} for each POP $\mathfrak{P}$ with bounding sequence $n_1 \geq n_2$.  

\begin{q} The monomials attached to super POPs discussed in Example \ref{sup-pop} are $(y_2 \otimes 1)^2(x_1 \otimes t)$, $(y_2 \otimes 1)(y_2 \otimes t)(x_1 \otimes t)$ and $(y_2 \otimes t)^2(x_1 \otimes t)$ respectively. \qed 
\end{q}
\subsection{} For $\psi \in (\mathfrak{h}[t])^*$ such that $n =\psi(h_2)\in \mathbb{Z}_+$ and $A\in \mathcal{S}_n$, define $GT_{A}^m: = \begin{array}{ccc}
         & m & \\
         n-s(A)& & 0    \end{array}$ where $0\leq m \leq n-s(A)$ and \begin{equation} \label{P(lambda)}
             \mathbb P(\psi) =\left\lbrace  \mathfrak{P^s} = \left(A, GT_{A}^m ; \underline{\pi}\right)|\, A \in \mathcal{S}_n, 
         \underline{\pi} \text{ is a partition contained in rectangle } (n-s(A)-m, m) \right\rbrace.
         \end{equation} of super POPs consists of matrices in $\mathcal{S}_n$ with POPs. The following theorem gives a new parametrization of the basis of the local Weyl module given in \propref{local_basis}.
\begin{thm}\label{POP thm} Let $\psi \in (\mathfrak{h}[t])^\ast$ and assume that there exist $\mu_1,\mu_2,\cdots,\mu_s \in P^+$ and pairwise distinct $z_1,z_2,\cdots,z_s \in \mathbb{C}$ such that $\psi(h\otimes t^p) = z_1^p \mu_1(h) + \cdots+ z_s^p \mu_s(h),\, \forall h \in \mathfrak h, p\geq 0$. Then  the elements $v_{\mathfrak{P^s}}. w_\psi$, as $\mathfrak{P^s}$ varies over all super POPs with $n= \psi(h_2)$, form a basis of $W(\psi).$  
\end{thm}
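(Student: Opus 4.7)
The plan is to reduce the theorem to \propref{local_basis} by constructing an explicit bijection $\Phi: \mathbb{P}(\psi) \to B(\psi)$ under which the monomial $v_{\mathfrak{P^s}}$ attached to a super POP is literally the same element of $U(\mathfrak{g}[t])$ as the monomial indexing the corresponding basis vector of \propref{local_basis}. Once $\Phi$ is in place, both spanning and linear independence transfer from one family to the other, and there is nothing further to prove.

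To define $\Phi$, I would first read off the $b_i$'s and $c_i$'s from $A$: with $l := \sum_j a_{2j}$ and $k := \sum_j a_{1j}$, let $0 \leq c_1 < \cdots < c_l \leq n-1$ and $0 \leq b_1 < \cdots < b_k$ denote the shifted positions of the $1$'s in rows $2$ and $1$ of $A$ respectively; the membership condition defining $\mathcal{S}_n$ forces $b_k \leq n-l-1$. Next, set $j := n - k - l - m$ where $m$ is the middle entry of $GT_A^m$, so $0 \leq j \leq n-k-l$, and read the partition $\underline{\pi}$, padded by $\underline{\pi}(0)$ zero parts, as a nondecreasing tuple $0 \leq a_1 \leq \cdots \leq a_j$. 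The condition that $\underline{\pi}$ fits in the rectangle with sides $(j, m)$ is exactly $a_i \leq m = n - k - l - j$, so the tuple belongs to $B(\psi)$. The inverse of $\Phi$ is obtained by reversing each dictionary item.

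It then remains to verify that the two products agree in $U(\mathfrak{g}[t])$. Expanding the definition of $\rho_{\mathfrak{P}_A}^{\alpha_2}$,
\[
\rho_{\mathfrak{P}_A}^{\alpha_2} = (y_2 \otimes 1)^{\underline{\pi}(0)} (y_2 \otimes t)^{\underline{\pi}(1)} \cdots (y_2 \otimes t^m)^{\underline{\pi}(m)},
\]
and since $y_2$ is even the factors pairwise commute, so this equals $(y_2 \otimes t^{a_1}) \cdots (y_2 \otimes t^{a_j})$. Similarly the factor $x_A$ collapses to $(x_1 \otimes t^{b_1}) \cdots (x_1 \otimes t^{b_k})(y_3 \otimes t^{c_1}) \cdots (y_3 \otimes t^{c_l})$ because in each block the indices are strictly increasing and the exponents lie in $\{0,1\}$. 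Therefore $v_{\mathfrak{P^s}}\cdot w_\psi$ coincides with the basis vector of \propref{local_basis} indexed by $\Phi(\mathfrak{P^s})$, and the conclusion follows at once.

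The main obstacle is keeping the combinatorial dictionary straight, particularly the correspondence $j \leftrightarrow m = n-k-l-j$ and the matching between the $\mathcal{S}_n$-constraint $a_{1k}=0$ for $k>n-l$ on the matrix side and the bound $b_k \leq n-l-1$ in $B(\psi)$; these are short bookkeeping checks but need to be done carefully so that the exponents and positions line up verbatim. As an independent sanity check one can compute $|\mathbb{P}(\psi)|=4^n=|B(\psi)|$ directly using $|\mathcal{S}_n|=3^n$ together with $\sum_{m=0}^{n-s(A)}\binom{n-s(A)}{m}=2^{n-s(A)}$, which is reassuring even though the explicit construction of $\Phi$ already subsumes any counting argument.
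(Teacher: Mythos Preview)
Your proposal is correct and follows essentially the same approach as the paper: both construct the explicit bijection $\mathbb{P}(\psi)\to B(\psi)$ by reading the $c_i$'s and $b_i$'s as the (shifted) column indices of the $1$'s in rows $2$ and $1$ of $A$, and the $a_i$'s as the parts of the partition overlay padded with zeros, then verify that $v_{\mathfrak{P^s}}$ equals the corresponding monomial from \propref{local_basis} verbatim. Your additional cardinality sanity check $|\mathbb{P}(\psi)|=4^n$ is a nice touch not present in the paper, though as you note it is subsumed by the explicit bijection.
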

\begin{proof}
    We will show that these basis elements are scalar multiples of basis elements constructed in \cite{macedo} recalled in \propref{local_basis}. We now establish a bijection between the set $\mathbb{P}(\psi)$ defined in \eqref{P(lambda)} and $B(\psi)$ defined in \eqref{b(lambda)-set}. This has the further property that if the super POP $\mathfrak{P^s}$ maps to the element $(a_1, a_2, \cdots, a_j, b_1, \cdots, b_k, c_1, \cdots, c_\ell)$ of $B(\psi)$, %such that $0\leq c_1 < \cdots < c_l \leq n-1,\, 0 \leq b_1 < \cdots < b_k \leq n-l-1,\, 0 \leq a_1 \leq \cdots \leq a_j \leq n-l-k-j$, 
    then the monomials $v_{\mathfrak{P^s}}$ and $(y_2 \otimes t^{a_1}) \cdots (y_2 \otimes t^{a_j}) (x_1 \otimes t^{b_1})\cdots (x_1 \otimes t^{b_k}) (y_3 \otimes t^{c_1}) \cdots (y_3 \otimes t^{c_l})$  will be the same.

         To define a map, let $\mathfrak{P^s} \in \mathbb P(\psi)$ be a super POP, with matrix $A= \begin{bmatrix}
    a_{11} & a_{12} & \cdots & a_{1n}\\
    a_{21} & a_{22} & \cdots & a_{2n}
\end{bmatrix}$ in $\mathcal{S}_n$, a GT-pattern $GT_{A}^m$
%$\begin{array}{ccc} & m & \\ n-|A|& & 0    \end{array}$ 
and a partition overlay $\underline{\pi}$ contained in rectangle  $(n-s(A)-m, m)$. Fix $I=\{1, 2, \cdots, n\}$. Choose $I_\ell=\{i_1,  i_2, \cdots ,i_\ell\} \subset I$ with $i_1 < i_2 < \cdots < i_l$ such that $a_{2 i_1}=a_{2 i_2}= \cdots= a_{2 i_\ell}=1$ and $a_{2s}=0 $ for all $s \in I\setminus I_\ell.$  Thus we have $0 \leq \ell \leq n$ where $I_0= \phi$ an empty set.

Similarly, choose $I_{k,\ell}=\{j_1, j_2 , \cdots,  j_k\} \subset I$ with $j_1 < j_2 < \cdots < j_k $, such that $a_{1 j_1}=a_{1 j_2}= \cdots= a_{1 j_k}=1$ and $a_{1s}=0 $ for all $s \in I\setminus I_{k,\ell}.$ %\textcolor{red}{Why $0\leq k\leq n-l$ }
Since $a_{1p} =0,\forall p > n-l$, we have $0 \leq k \leq n-\ell $ where $I_{0, \ell}= \phi$ an empty set. Here $s(A)= k+\ell.$ Thus we have $$GT_{A}^{m} = \begin{array}{ccc}
         & m & \\
         n-k-\ell& & 0    \end{array}, \text{ with partition overlay $\underline{\pi}$ contained in a rectangle} (n-k-\ell-m, m).$$ Let $j =n-k-\ell-m$, then $\underline{\pi} $ is a partition contained in rectangle $(j, n-k-\ell-j)$, so it has atmost $j$ parts. Let $\underline{\pi}^{\downarrow}:= (\underline{\pi}_1,\underline{\pi}_2, \cdots, \underline{\pi}_j)$ be the sequence of parts of partition $\underline{\pi}$ arranged in weakly increasing order, padded with the appropriate number of zeros at the beginning so that the length of $\underline{\pi}^{\downarrow}$ is $j$.  Note that $0 \leq \underline{\pi}_1\leq \underline{\pi}_2 \leq  \cdots \leq \underline{\pi}_j \leq n-k-\ell-j$.\\  

Now we have $ I_\ell =\{i_p: 1 \leq p \leq \ell, \quad 1 \leq i_1<i_2 < \cdots < i_\ell \leq n 
 \}, \quad I_{k,\ell} =\{j_q: 1 \leq q \leq k, \quad 1 \leq j_1<j_2 < \cdots < j_k \leq n-\ell \},\quad \underline{\pi}^{\downarrow} =(\underline{\pi}_1,\underline{\pi}_2, \cdots, \underline{\pi}_j)$.
         Take $c_p= i_p-1$ for $1 \leq p \leq \ell$ and $b_q= j_q-1$ for $1\leq q \leq k$ and $a_r= \underline{\pi}_r$ for $1 \leq r \leq j.$ Thus we have associated a tuple $\mathfrak{b}(\mathfrak{P^s})= (a_1, a_2, \cdots, a_j, b_1, \cdots, b_k, c_1, \cdots, c_\ell)$ in $B(\psi)$
         %such that $0\leq c_1 < \cdots < c_\ell \leq n-1,\, 0 \leq b_1 < \cdots b_k \leq n-\ell-1,\, 0 \leq a_1 \leq \cdots \leq a_j \leq n-\ell-k-j$ 
         to a super POP $\mathfrak{P^s}.$ We will prove that $\mathfrak{P^s}\mapsto \mathfrak{b}(\mathfrak{P^s})$ is a bijection.\\

         To give the reverse map, consider an element $\hat{b} = (a_1, a_2, \cdots, a_j, b_1, \cdots, b_k, c_1, \cdots, c_\ell)$ in $B(\psi)$, 
         %such that $0\leq c_1 < \cdots < c_\ell \leq n-1,\, 0 \leq b_1 < \cdots b_k \leq n-\ell-1,\, 0 \leq a_1 \leq \cdots \leq a_j \leq n-\ell-k-j$
         then take $i_p= c_p+1, \forall 1 \leq p \leq \ell$; $j_q= b_q+1, \forall 1 \leq q \leq k$  and $\underline{\pi}_r= a_r, \forall 1 \leq r \leq j.$ Then we obtained the sets $I_\ell =\{i_p: 
   1 \leq p \leq \ell, \quad 1 \leq i_1<i_2 < \cdots < i_\ell \leq n 
 \}$, $
  I_{k,\ell} =\{j_q: 1 \leq q \leq k, \quad 1 \leq j_1<j_2 < \cdots < j_k \leq n-\ell \}$ and $\underline{\pi}^{\downarrow} =(\underline{\pi}_1,\underline{\pi}_2, \cdots, \underline{\pi}_j)$. 
  Take a matrix $A= \begin{bmatrix}
    a_{11} & a_{12} & \cdots & a_{1n}\\
    a_{21} & a_{22} & \cdots & a_{2n}
\end{bmatrix}$ such that $a_{2 i_1}=a_{2 i_2}= \cdots= a_{2 i_\ell}=1$ and $a_{2s}=0, \forall s \in I\setminus I_\ell$ and $a_{1 j_1}=a_{1 j_2}= \cdots= a_{1 j_k}=1$ and $a_{1s}=0, \forall s \in I\setminus I_{k,\ell}.$ Since, $j_k \leq n-l$, we have that $a_{1s} =0, \forall s> n-l$ which implies that $A \in \mathcal{S}_n$. Thus corresponding to the matrix $A$, we have a POP $GT_{A}^{m} = \begin{array}{ccc}
     & m & \\
     n-s(A) & & 0 
\end{array}$ where $s(A) = n+\ell$ and $m=n-k-\ell-j $ with a partition overlay $\underline{\pi}$ which is obtained by removing zeros from $\underline{\pi}^\downarrow$ and by writing non-zeroes terms in the decreasing order. Since the partition $\underline{\pi}$ has atmost $j$ parts and the largest part $\underline{\pi}_j \leq n-k-\ell-j$, we have that the partition $\underline{\pi}$ is contained in the rectangle $(j, n-k-\ell-j).$  Therefore, we obtain a super POP $\mathfrak{p}(\hat{b})=\left(A, GT_{A}^m;
%\quad \begin{array}{ccc} & n-k-\ell-j & \\n-k-\ell & & 0 \end{array};\quad
\underline\pi \right)$ from the element $\hat{b}$ and a map $\hat{b}\mapsto \mathfrak{p}(\hat{b})$.\\ 
%which is the inverse of $\mathfrak{P^s}\mapsto \mathfrak{b}(\mathfrak{P^s})$.
%$(a_1, a_2, \cdots, a_j, b_1, \cdots, b_k, c_1, \cdots, c_\ell)$.

It is clear that the maps defined above are mutual inverses of each other. So the monomial $v_{\mathfrak{P^s}}$ and $(y_2 \otimes t^{a_1}) \cdots (y_2 \otimes t^{a_j}) (x_1 \otimes t^{b_1})\cdots (x_1 \otimes t^{b_k}) (y_3 \otimes t^{c_1}) \cdots (y_3 \otimes t^{c_\ell})$ are exactly the same. This completes the proof of the theorem.
\end{proof}
Using \thmref{POP thm}, we obtain a new graded character formula for the local Weyl module $W(\psi)$ in terms of the super POP, leading to the following immediate corollary.
\begin{cor}
    Let $\psi \in (\mathfrak{h}[t])^\ast$ and assume that there exist $\mu_1,\mu_2,\cdots,\mu_s \in P^+$ and pairwise distinct $z_1,z_2,\cdots,z_s \in \mathbb{C}$ such that $\psi(h\otimes t^p) = z_1^p \mu_1(h) + \cdots+ z_s^p \mu_s(h),\, \forall h \in \mathfrak h, p\geq 0$. Then $$\ch_{gr} (W(\psi)) = \sum_{\mathfrak{P^s}\in \mathbb{P}(\psi)} u_1^{\mathfrak{P^s}(h_1)}u_2^{\mathfrak{P^s}(h_2)}q^{|\mathfrak{P^s}|}$$ where $\mathfrak{P^s}(h_i) \, (\text{ respectively } |\mathfrak{P^s}|)$ denote the $h_i$-weight (respectively grade) of $v_{\mathfrak{P^s}}w_{\psi}$ and the sum varies over all super POPs $\mathfrak{P^s}$ with $n= \psi(h_2)$. 
\end{cor}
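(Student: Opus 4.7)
The proof will be an essentially immediate consequence of \thmref{POP thm}. The plan is to argue that each basis element $v_{\mathfrak{P^s}} w_{\psi}$ is both a weight vector for $\mathfrak h$ and homogeneous for the grading, and then sum the corresponding monomials.

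First, I would note that the generator $w_{\psi}$ is a weight vector of weight $(\psi(h_1),\psi(h_2))$ and sits in graded degree $0$. Since $v_{\mathfrak{P^s}}$ is by construction a product of the root vectors $(y_2\otimes t^{a_i})$, $(x_1\otimes t^{b_j})$, $(y_3\otimes t^{c_k}) \in U(\mathfrak g[t])$, each of which is a weight vector for the adjoint action of $\mathfrak h$ and is homogeneous with respect to the $t$-grading, the product $v_{\mathfrak{P^s}} w_{\psi}$ is again a weight vector of some well-defined $\mathfrak h$-weight $(\mathfrak{P^s}(h_1),\mathfrak{P^s}(h_2))$ and lies in a well-defined graded component of $W(\psi)$ of degree $|\mathfrak{P^s}| = \sum a_i + \sum b_j + \sum c_k$.

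Second, I would invoke \thmref{POP thm}, which says that as $\mathfrak{P^s}$ ranges over $\mathbb P(\psi)$, the vectors $v_{\mathfrak{P^s}} w_{\psi}$ form a basis of $W(\psi)$. Combining this with the previous observation, we get a basis of $W(\psi)$ consisting of homogeneous weight vectors, from which the graded character formula
\[
\ch_{gr}(W(\psi)) \;=\; \sum_{\mathfrak{P^s}\in\mathbb P(\psi)} u_1^{\mathfrak{P^s}(h_1)}\, u_2^{\mathfrak{P^s}(h_2)}\, q^{|\mathfrak{P^s}|}
\]
follows directly from the very definition of $\ch_{gr}$.

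Since the work has already been done in the bijection underlying \thmref{POP thm}, there is no substantive obstacle here; the only thing to be careful about is the bookkeeping that $\mathfrak{P^s}(h_1)$, $\mathfrak{P^s}(h_2)$ and $|\mathfrak{P^s}|$ are well-defined (i.e.\ depend only on $\mathfrak{P^s}$ and not on an ordering of the constituent factors of $v_{\mathfrak{P^s}}$), which is immediate since weights and the $t$-degree are additive under the product in $U(\mathfrak g[t])$.
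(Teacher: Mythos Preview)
Your proposal is correct and matches the paper's approach exactly: the paper states this corollary as an immediate consequence of \thmref{POP thm} without giving any further argument. Your explanation that the basis vectors $v_{\mathfrak{P^s}}w_{\psi}$ are simultaneously $\mathfrak h$-weight vectors and homogeneous for the $t$-grading is precisely the (unstated) reasoning behind the paper's one-line deduction.
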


In the following example, we compute the basis of $W(\psi)$ for $\psi(h_2)=2$ using super POPs.
\begin{q} Let $\psi \in \mathfrak h[t]^\ast$ such that $\psi(h_2)=2$. Then 
$$\mathcal{S}_2=\left\{ \begin{bmatrix}
    0 & 0 \\
    0 & 0 
\end{bmatrix}, \begin{bmatrix}
    1 & 0 \\
    0 & 0 
\end{bmatrix}, \begin{bmatrix}
    0 & 1 \\
    0 & 0 
\end{bmatrix}, \begin{bmatrix}
    1 & 1 \\
    0 & 0 
\end{bmatrix},
\begin{bmatrix}
    0 & 0 \\
    1 & 0 
\end{bmatrix},
\begin{bmatrix}
    1 & 0 \\
    1 & 0 
\end{bmatrix}, \begin{bmatrix}
    0 & 0 \\
    0 & 1 
\end{bmatrix},  \begin{bmatrix}
    1 & 0 \\
    0 & 1 
\end{bmatrix},
\begin{bmatrix}
    0 & 0 \\
    1 & 1 
\end{bmatrix} \right\}.$$
Note that $s(A)$, the sum of entries of matrix $A$, for each $A \in \mathcal{S}_2$, can be $0,1,2$. If $s(A)=0$, only such $A=\begin{bmatrix}
    0 & 0 \\
    0 & 0 
\end{bmatrix} \in \mathcal{S}_2$ then POPs associated to bounding sequence $2 \geq 0$ are 
$$ \left( \begin{array}{ccc}
      &  0 & \\
      2& & 0
 \end{array}; \phi
 \right), \left( \begin{array}{ccc}
      &  1 & \\
      2& & 0
 \end{array}; \phi
 \right), \left( \begin{array}{ccc}
      &  1 & \\
      2& & 0
 \end{array}; (1)
 \right), \left( \begin{array}{ccc}
      &  2 & \\
      2& & 0
 \end{array}; \phi
 \right) $$ Thus the associated bases elements are $y_2^2 w_\lambda, \,\,  y_2w_\lambda, \,\, (y_2 \otimes t)w_\lambda,  \text{ and } 1.w_\lambda$ respectively.\\ 
 If $s(A)=1$ then POPs associated to bounding sequence $1 \geq 0$ are $\left( \begin{array}{ccc}
      &  0 & \\
      1& & 0
 \end{array}; \phi
 \right), \left( \begin{array}{ccc}
      &  1 & \\
      1& & 0
 \end{array}; \phi
 \right).$  There are $4$ elements in $\mathcal S_2$ such that $s(A)=1$. If we take $A=\begin{bmatrix}
    1 & 0 \\
    0 & 0 
\end{bmatrix}$ then with two POPs, associated basis elements are $y_2 x_1 w_\lambda$ and $x_1 w_\lambda$ respectively. Similarly other such $A$ correspond to $y_2(x_1 \otimes t)w_\lambda, \,\, (x_1 \otimes t)w_\lambda, \,\, y_2 y_3 w_\lambda, \,\, y_3 w_\lambda, \,\,  y_2(y_3 \otimes t)w_\lambda, \text{ and } (y_3 \otimes t)w_\lambda $.
 If $s(A)=2$ then only POP associated to bounding sequence $0 \geq 0$ is $\left( \begin{array}{ccc}
      &  0 & \\
      0& & 0
 \end{array}; \phi
 \right).$ There are again $4$ choices of matrix $A \in \mathcal{S}_2$ such that $s(A)=2$. 
 If we take $A= \begin{bmatrix}
    1 & 1 \\
    0 & 0 
\end{bmatrix}$ then associated basis monomial is $ x_1(x_1 \otimes t) w_\lambda$. Similarly remaining $3$ elements will correspond to $x_1y_3 w_\lambda, \, \, x_1(y_3 \otimes t) w_\lambda, y_3(y_3 \otimes t) w_\lambda.$
\end{q}

%--------------------------
%----------MAIN RESULTS----------------
\section{A short exact sequence of CV modules}\label{sec4}
%This is the main section of the paper where we will develop a short exact sequence of CV modules
%The notion of CV modules was introduced in \cite{cv} for current Lie algebras. Then this notion was generalized in \cite{macedo} for current Lie superalgebras $\mathfrak{sl}(1|2)[t]$ and the authors have given a basis for the CV module. Using the basis, the authors proved that the fusion product of generalized Kac modules is isomorphic to a CV module and hence independent of chosen parameters. In this section, we will also obtain similar results, given in \cite{macedo} by constructing a short exact sequence of CV modules. This method is more useful, as it helps to generalize the results given in current Lie algebras $\mathfrak{sl}_{2,\alpha}[t]$ to current Lie superalgebras $\mathfrak{sl}(1|2)[t]$. 
\subsection{} Let us recall the definition of CV modules given in \cite{macedo} for $\mathfrak{sl}(1|2)[t]$. 
\begin{defn}
    Given $\lambda \in P^+$ and a partition $\xi = (\xi_0\geq \xi_1 \geq \cdots \geq \xi_n )$ of $\lambda_2 \in \mathbb{Z}_+$. Denote the pair $(\lambda_1,\xi)$ by $\bxi$ and define CV module $V(\bxi)$ to be the quotient of the local Weyl module $W(\lambda)$ by the extra relations \begin{equation}\label{cv-relations}
        (x_2\otimes t)^s (y_2 \otimes 1)^{r+s}, \forall s>kr+\xi_{k+1}+\cdots + \xi_n, \quad r,s >0, \quad 0\leq k \leq n
    \end{equation}
%    where $w_{\lambda}$ is the generator of the local Weyl module $W(\lambda)$.
\end{defn}
Let us denote $(x_2\otimes t)^{(s)} (y_2 \otimes 1)^{(r+s)}$ by $Y_2(r,s)$. Also, $V(\bxi)$ is a cyclic $\mathfrak{sl}(1|2)[t]$-module generated by $v_{\boldsymbol{\xi}}$ which is the image of $w_{\lambda}$ onto $V(\bxi)$. It was proved in \cite{macedo} that if $\xi = (1^{\lambda_2})$, then $V(\bxi)= W(\lambda)$, if $\xi = \lambda_2$, then $V(\bxi) = K(\lambda)^0$ and if $\xi = (0)$, then $V(\bxi) = K(0)^0 = \mathbb{C}$.
\begin{lem}(See \cite[Lemma 4.3]{macedo}) For $\lambda \in P^+ , \xi = (\xi_0 \geq \cdots \geq \xi_n)$ a partition of $\lambda_2 \in \mathbb{Z}_+$ and $\bxi=(\lambda_1,\xi)$. We have that the relations \eqref{cv-relations} of $V(\bxi)$ reduce to $$y_2(r,s)v_{\boldsymbol{\xi}} =0, \forall s> kr+\xi_{k+1}+\cdots +\xi_n, \quad \xi_{k+1}\leq r \leq \xi_k, \quad 0\leq k \leq n.$$
\end{lem}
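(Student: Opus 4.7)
The plan is to translate the CV-relations into the $y_2(r,s)$ form via Garland's identity (the Lemma immediately preceding the statement), and then to observe that, inside that reformulated family, the restriction $\xi_{k+1}\le r\le\xi_k$ loses no information. I would begin by noting that $V(\bxi)$ is a quotient of the graded Weyl module $W(\lambda)$, so the cyclic vector $v_{\boldsymbol{\xi}}$ inherits $\mathfrak n^+[t]\,v_{\boldsymbol{\xi}}=0$ and $t\mathfrak h[t]\,v_{\boldsymbol{\xi}}=0$. Applying the congruence
$$
(x_2\otimes t)^{(s)}(y_2\otimes 1)^{(r+s)}\;\equiv\;(-1)^{s}\,y_2(r,s)\pmod{U(\mathfrak g[t])\bigl(\mathfrak n^+[t]\oplus t\mathfrak h[t]\bigr)}
$$
to $v_{\boldsymbol{\xi}}$ shows that, for every $r,s>0$, the defining relation $Y_2(r,s)v_{\boldsymbol{\xi}}=0$ is equivalent to $y_2(r,s)v_{\boldsymbol{\xi}}=0$. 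With the shorthand $\phi_r(k):=kr+\xi_{k+1}+\cdots+\xi_n$, the task is then to show that the relations at the restricted indices $(r,k)$ already imply $y_2(r,s)v_{\boldsymbol{\xi}}=0$ for every $r,s>0$ and every $0\le k\le n$.

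The combinatorial input is the first-difference identity $\phi_r(k+1)-\phi_r(k)=r-\xi_{k+1}$. Because $\xi$ is a partition, this difference is weakly increasing in $k$, so $k\mapsto \phi_r(k)$ is unimodal: it strictly decreases while $\xi_{k+1}>r$ and is non-decreasing once $\xi_{k+1}\le r$. For each $r$ in the range $1\le r\le \xi_0$ there is a (smallest) index $k_\ast(r)$ with $\xi_{k_\ast(r)+1}\le r\le \xi_{k_\ast(r)}$ (using the convention $\xi_{n+1}=0$) at which $\phi_r$ attains its minimum. Hence, imposing $y_2(r,s)v_{\boldsymbol{\xi}}=0$ for this single $k_\ast(r)$ and all $s>\phi_r(k_\ast(r))$ automatically produces the relations at every other $k$, whose requirement $s>\phi_r(k)\ge \phi_r(k_\ast(r))$ is strictly weaker. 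Thus, for $r\le\xi_0$ no information is lost by keeping only $k=k_\ast(r)$.

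It remains to dispose of $r>\xi_0$. For such $r$, the difference $r-\xi_{k+1}$ is positive for every $k$, so $\phi_r$ is strictly increasing and the tightest bound from the original relations is $\phi_r(0)=\xi_1+\cdots+\xi_n=\lambda_2-\xi_0$. Any pair $(r,s)$ with $r>\xi_0$ and $s>\lambda_2-\xi_0$ satisfies $r+s\ge \lambda_2+1$, so the integrability relation $(y_2\otimes 1)^{\lambda_2+1}v_{\boldsymbol{\xi}}=0$ of $W(\lambda)$ already forces $(y_2\otimes 1)^{(r+s)}v_{\boldsymbol{\xi}}=0$; Garland's identity then gives $y_2(r,s)v_{\boldsymbol{\xi}}=0$ for free, with no new CV-relation needed. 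The only non-routine ingredient in the whole argument is the unimodality bookkeeping for $\phi_r$; once that is set up, the two regimes $r\le\xi_0$ and $r>\xi_0$ merge cleanly to give the claimed reduction.
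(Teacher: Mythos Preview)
Your argument is correct. The paper does not give its own proof of this lemma; it is quoted verbatim from \cite{macedo} (which in turn follows \cite{cv}), so there is no in-paper proof to compare against. That said, your proof is precisely the standard one used in those references: apply Garland's identity to replace $Y_2(r,s)v_{\boldsymbol{\xi}}$ by $(-1)^s y_2(r,s)v_{\boldsymbol{\xi}}$ using $\mathfrak n^+[t]v_{\boldsymbol{\xi}}=0=t\mathfrak h[t]v_{\boldsymbol{\xi}}$, then observe that $\phi_r(k)=kr+\sum_{j>k}\xi_j$ is minimized exactly at the $k$ with $\xi_{k+1}\le r\le \xi_k$, and finally absorb the regime $r>\xi_0$ into the Weyl-module relation $(y_2\otimes 1)^{\lambda_2+1}v_{\boldsymbol{\xi}}=0$. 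Nothing is missing.
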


\noindent

For root $\alpha_2$, we have a copy of $\mathfrak{sl}_{2,\alpha_2}$ in $\mathfrak{sl}(1|2)$. Thus, \cite[Section 2.3, 2.4]{Murray2018} holds in our case. Using similar arguments given in \cite[Theorem 2.2.1]{Murray2018}, we have the following result in our case.
\begin{prop} For $\lambda \in P^+ , \xi = (\xi_0 \geq \cdots \geq \xi_n)$ a partition of $\lambda_2 \in \mathbb{Z}_+$ and $\bxi=(\lambda_1,\xi)$. The module $V(\bxi)$ is quotient of $W(\lambda)$ by the submodule generated by \begin{equation}\label{3rd presentation} \{ y_2(r, |(\xi^{tr})^{(r)}|-r+1)v_{\boldsymbol{\xi}} =0, \, \forall \, 1 \leq r \leq \xi_1-1.\}\end{equation}
where $\xi^{tr}$ is the transpose partition of $\xi$ and $(\xi)^{(r)}:= \sum\limits_{j=1}^{r} \xi_j.$
\end{prop}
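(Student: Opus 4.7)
The plan is to prove that the submodule $N$ of $W(\lambda)$ generated by the CV relations from the previous lemma coincides with the submodule $M$ generated by the proposed corner relations \eqref{3rd presentation}; equivalently, to check both inclusions $M \subseteq N$ and $N \subseteq M$.

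For $M \subseteq N$, it suffices to verify that $|(\xi^{tr})^{(r)}| - r + 1$ is one of the admissible values of $s$ appearing in the previous lemma. Writing $\xi^{tr}_j = |\{i : \xi_i \geq j\}|$, one has
\[ |(\xi^{tr})^{(r)}| \;=\; \sum_{j=1}^{r}\xi^{tr}_j \;=\; \sum_{i=0}^{n} \min(\xi_i, r). \]
If $r$ lies in $[\xi_{k+1}, \xi_k]$ for some $0 \leq k \leq n$, the first $k+1$ summands equal $r$ while the remaining summands equal $\xi_i$, giving $|(\xi^{tr})^{(r)}| = (k+1)r + \xi_{k+1}+\cdots+\xi_n$, and hence
\[ |(\xi^{tr})^{(r)}| - r + 1 \;=\; kr + \xi_{k+1}+\cdots+\xi_n + 1, \]
which is the smallest integer $s$ satisfying $s > kr + \xi_{k+1}+\cdots+\xi_n$. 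So each generator of $M$ appears as one of the relations defining $N$.

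For $N \subseteq M$, the key observation is that every operator $y_2(r,s)$ already lies inside $\bu(\mathfrak{sl}_{2,\alpha_2}[t])$, and $v_{\boldsymbol\xi}$ is an $\mathfrak{sl}_{2,\alpha_2}[t]$-highest weight vector of weight $\lambda_2$ satisfying $t\mathfrak{h}[t]v_{\boldsymbol\xi} = 0$ and $(y_2 \otimes 1)^{\lambda_2 + 1} v_{\boldsymbol\xi} = 0$. The argument therefore takes place inside a cyclic $\mathfrak{sl}_2[t]$-module, and the procedure of Theorem 2.2.1 of \cite{Murray2018} applies directly. Concretely, I would deduce every remaining relation $y_2(r,s) v_{\boldsymbol\xi} = 0$ from the corner relations via two straightening identities modulo $\bu(\mfg[t])(\mathfrak{n}^+[t] \oplus t\mathfrak{h}[t])$: a horizontal shift, obtained from the action of $h_2 \otimes t^a$, which propagates a corner relation $y_2(r, s_r) v_{\boldsymbol\xi} = 0$ to $y_2(r, s_r + a) v_{\boldsymbol\xi} = 0$ for all $a \geq 0$; and a vertical lifting by left multiplication with $y_2 \otimes 1$, which expresses $y_2(r+1, s)$ in terms of $y_2(r, s)$ plus strictly lower-order terms. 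Iterating these shifts, and terminating the induction on $r$ at $r = \lambda_2 + 1$ via $(y_2 \otimes 1)^{\lambda_2 + 1} v_{\boldsymbol\xi} = 0$, recovers every CV relation of the previous lemma.

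The main obstacle is the clean formulation of the two straightening identities in $\bu(\mfg[t]) / \bu(\mfg[t])(\mathfrak{n}^+[t] \oplus t\mathfrak{h}[t])$, namely the verification that the $\mathfrak{sl}_2[t]$ computations of \cite{Murray2018} transfer verbatim to the subalgebra $\mathfrak{sl}_{2,\alpha_2}[t]$ of $\mfg[t]$; this is where the assumption $\xi_{k+1} \leq r \leq \xi_k$ in the lemma is essential, since it ensures that each intermediate application of the shift still falls within the admissible region. Once these identities are in hand, the inductive generation of the full CV relation set from the corner relations is routine.
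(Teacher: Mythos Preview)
Your proposal is correct and follows essentially the same route as the paper: both arguments observe that all the operators $y_2(r,s)$ lie in the $\bu(\mathfrak{sl}_{2,\alpha_2}[t])$-subalgebra and that $v_{\boldsymbol\xi}$ is an $\mathfrak{sl}_2[t]$-highest weight vector, so the reduction to corner relations is exactly the content of \cite[Theorem~2.2.1]{Murray2018}. Your write-up is in fact more detailed than the paper's, which simply invokes that reference; your explicit computation of $|(\xi^{tr})^{(r)}| = (k+1)r + \xi_{k+1}+\cdots+\xi_n$ for $\xi_{k+1}\le r\le \xi_k$ and the identification of the two straightening moves make the $M\subseteq N$ and $N\subseteq M$ inclusions transparent.
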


\begin{lem}\label{technical lemma} Suppose $V$ is a $\mathfrak{g}[t]$ module  and $v \in V $ such that $$\mathfrak (x_2 \otimes t^r) v=0, \, \forall r\geq 0, \quad (h \otimes t^p)v=0, \forall p \in \mathbb N,\,h\in \mathfrak{h}, \quad  \text{and } Y_2(r,s) v=0, \forall s+r \geq  N $$ for some $N \in \mathbb N$. If $(y_2 \otimes t^{j+1})v=0$ for some $j \in \mathbb N$, then the following holds:
\begin{itemize}
    \item[(i)] $Y_2(r,s)(y_2 \otimes t^j)^k v=0$ for all $r+s \geq N-2k.$
   % \item[(ii)] $y_2(r,s)(y_2 \otimes t^j)^k (y_2 \otimes t^i) v=0$ for all $i \in \mathbb N$, $r+s \geq N-2k-2.$
    \item[(ii)] If $(y_3 \otimes t^{b+j})v=0$ for some $b \in \mathbb N$ and $(y_1 \otimes t^r)v =0, \forall r\geq 0$, then $Y_2(r,s)(y_2 \otimes t^j)^k(y_3 \otimes t^b)v=0$ for all $r+s \geq N-2k-1.$
    \item[(iii)] If $(x_1 \otimes t^{a+j})v=0$ for some $a \in \mathbb N$ and $(x_3 \otimes t^r)v =0, \forall r\geq 0$ then $Y_2(r,s)(y_2 \otimes t^j)^k(x_1 \otimes t^a)v=0$ for all $r+s \geq N-2k-1.$
    \item[(iv)] If $(x_1 \otimes t^{a+j}) v=0$, $(y_3 \otimes t^{b+j}) v=0$ for some $a,b \in \mathbb N$ and $(y_1 \otimes t^r)v =0=(x_3 \otimes t^r)v , \forall r\geq 0$ then 
 $Y_2(r,s)(y_2 \otimes t^j)^k(x_1 \otimes t^a)(y_3 \otimes t^b)v=0$ for all $r+s \geq N-2k-2.$
\end{itemize}  
\end{lem}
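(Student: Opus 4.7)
The plan is to prove part (i) by induction on $k$, with parts (ii)--(iv) reducing to (i) via commutation arguments involving the odd generators. Throughout, the overall strategy is to pass to the subalgebra $\mathfrak{sl}_{2,\alpha_2}[t]\subset \mathfrak{g}[t]$ and invoke Garland's identity.

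\emph{Preliminary step.} I would first strengthen the hypothesis $(y_2\otimes t^{j+1})v=0$ to $(y_2\otimes t^q)v=0$ for all $q\geq j+1$. This follows from the identity $[h_2\otimes t^p,\, y_2\otimes t^{j+1}]=-2(y_2\otimes t^{p+j+1})$ combined with the standing hypothesis $(h_2\otimes t^p)v=0$ for $p\geq 1$: applying the commutator to $v$ forces $(y_2\otimes t^{p+j+1})v=0$ for each $p\geq 1$, which together with $(y_2\otimes t^{j+1})v=0$ yields the claim. An analogous argument, using $[h_2,y_3]=-y_3$ from the matrix realization, gives $(y_3\otimes t^q)v=0$ for every $q\geq b+j$ in the setting of part (ii), and similarly $(x_1\otimes t^q)v=0$ for $q\geq a+j$ in part (iii).

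\emph{Part (i).} I induct on $k$; the base case $k=0$ is the hypothesis. For the inductive step, Garland's identity (as cited) applied inside $U(\mathfrak{sl}_{2,\alpha_2}[t])$ gives $Y_2(R,S)v \equiv (-1)^S y_2(R,S)v$ modulo terms annihilating $v$ (using $(x_2\otimes t^p)v=0$ and $(h_2\otimes t^p)v=0$), so $y_2(R,S)v=0$ whenever $R+S\geq N$. Taking $R=r+s+k+1$ and $S=j(k+1)$, the condition $r+s\geq N-2(k+1)$ implies $R+S=r+s+(j+1)(k+1)\geq N$ since $j\geq 1$, hence $y_2(r+s+k+1,\,j(k+1))v=0$. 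In the PBW expansion of this element, every monomial containing a factor $(y_2\otimes t^p)$ with $p\geq j+1$ vanishes on $v$ by the preliminary step. The distinguished surviving monomial is $(y_2\otimes 1)^{(r+s)}(y_2\otimes t^j)^{(k+1)}$; the remaining ``intermediate'' surviving monomials $\prod_{p=0}^{j}(y_2\otimes t^p)^{(b_p)}$ with $b_j<k+1$ are rewritten (via further commutators in $\mathfrak{sl}_{2,\alpha_2}[t]$) in the form $Y_2(r',s')(y_2\otimes t^j)^{k'}v$ with $k'\leq k$ and $r'+s'\geq N-2k'$, and hence vanish by the inductive hypothesis. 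Multiplying the isolated identity by $(x_2\otimes t)^{(s)}$ and clearing factorials gives $Y_2(r,s)(y_2\otimes t^j)^{k+1}v=0$.

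\emph{Parts (ii)--(iv).} From the matrix realization one checks $[y_2,y_3]=[y_2,x_1]=0$, so the odd operators $(y_3\otimes t^b)$ and $(x_1\otimes t^a)$ commute freely with $(y_2\otimes t^j)^k$. The brackets $[x_2\otimes t,\,y_3\otimes t^b]=y_1\otimes t^{b+1}$ and $[x_2\otimes t,\,x_1\otimes t^a]=-(x_3\otimes t^{a+1})$, together with $[y_1,x_2]=[x_3,x_2]=0$, yield
$$
Y_2(r,s)(y_3\otimes t^b) = (y_3\otimes t^b)Y_2(r,s) + (y_1\otimes t^{b+1})Y_2(r+1,s-1),
$$
and an analogue with $x_1$ and $-x_3$ in place of $y_3$ and $y_1$. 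Iterating these identities and repeatedly using $(y_1\otimes t^r)v=0$, $(x_3\otimes t^r)v=0$, and the preliminary step $(y_3\otimes t^q)v=0$ for $q\geq b+j$, $(x_1\otimes t^q)v=0$ for $q\geq a+j$, one reduces each of (ii)--(iv) to instances of (i): each odd generator being moved past $Y_2$ contributes a shift of $1$ in the bound, giving $N-2k-1$ in (ii), (iii) and $N-2k-2$ in (iv).

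\emph{Main obstacle.} The principal technical difficulty is the nested induction in part (i) for $j\geq 2$: the intermediate PBW monomials surviving in the Garland expansion of $y_2(r+s+k+1,\,j(k+1))$ do not vanish immediately on $v$, and rewriting each one as $Y_2(r',s')(y_2\otimes t^j)^{k'}v$ in a form to which the inductive hypothesis on $k$ applies requires careful combinatorial bookkeeping via iterated commutator identities within $\mathfrak{sl}_{2,\alpha_2}[t]$. A secondary difficulty is controlling the boundary cases $r+s=N-2k-1$ and $r+s=N-2k-2$ in parts (ii)--(iv), where the iteration on $b$ (or $a$) must close using exactly the additional hypotheses $(y_3\otimes t^{b+j})v=0$ and $(x_1\otimes t^{a+j})v=0$.
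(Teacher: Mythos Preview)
Your treatment of part~(i) is reasonable: the paper simply invokes \cite[Corollary~6.6]{cv} via the $\mathfrak{sl}_{2,\alpha_2}$-copy, and your explicit inductive rewriting of the Garland expansion is a plausible route to the same conclusion, even if the bookkeeping you flag as the ``main obstacle'' is indeed delicate.

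However, your argument for parts (ii)--(iv) has a genuine gap. The commutator identity you derive,
\[
Y_2(r,s)(y_3\otimes t^b) \;=\; (y_3\otimes t^b)\,Y_2(r,s) \;+\; (y_1\otimes t^{b+1})\,Y_2(r+1,s-1),
\]
preserves the sum $r+s$ in both terms on the right. Applying it to $(y_2\otimes t^j)^k v$ and invoking part~(i) therefore only gives vanishing for $r+s\geq N-2k$, not $N-2k-1$; no amount of iteration on $b$ will lower the threshold, since every term produced still has the same $r+s$. Your claim that ``each odd generator being moved past $Y_2$ contributes a shift of $1$ in the bound'' is precisely what fails.

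The paper obtains the shift by a different mechanism: rather than commuting $y_3\otimes t^b$ past $Y_2$, it applies $y_1\otimes t^b$ from the \emph{left} to the already-known-zero element $Y_2(r,s)(y_2\otimes t^j)^k v$ (valid for $r+s\geq N-2k$). The key brackets are $[y_1,x_2]=0$ and $[y_1,y_2]=\pm y_3$: commuting $y_1\otimes t^b$ through $(y_2\otimes 1)^{(r+s)}$ drops the exponent by one and produces the factor $y_3\otimes t^b$, so one of the resulting terms is $Y_2(r-1,s)(y_2\otimes t^j)^k(y_3\otimes t^b)v$. Since $(y_1\otimes t^b)v=0$ and $(y_3\otimes t^{b+j})v=0$ kill the other terms, one concludes $Y_2(r-1,s)(y_2\otimes t^j)^k(y_3\otimes t^b)v=0$ for $r+s\geq N-2k$, i.e.\ the desired bound $N-2k-1$. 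Parts (iii) and (iv) follow by the analogous trick with $x_3\otimes t^a$ (using $[x_3,y_2]=x_1$). The shift of $1$ thus comes from lowering the $(y_2\otimes 1)$-exponent inside $Y_2$, not from moving the target odd element past $Y_2$.
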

\begin{proof}(i) There is a copy of $\mathfrak{sl}_{2, \alpha_2}$ in $\mathfrak{sl}(1|2).$ Therefore, part(i) follows from \cite[Corollary 6.6]{cv}.
    (ii) Suppose $(y_3 \otimes t^{b+j}) v =0$ for some $b \in \mathbb{N}$.   Note that $(y_1 \otimes t^b)v=0$ and $[y_1, x_2]=0$,
   \begin{equation}\label{y_1Y_2} \begin{aligned} (y_1 \otimes t^b) Y _2(r,s)(y_2 \otimes t^j)^k v& =   Y_2(r,s)(y_2 \otimes t^j)^k (y_1 \otimes t^b) v+ Y_2(r-1,s)(y_2 \otimes t^j)^k (y_3 \otimes t^b) v\\&+ kY_2(r,s)(y_2 \otimes t^j)^{k-1}(y_3 \otimes t^{b+j})v \end{aligned}\end{equation}
    For $r+s \geq N-2k$, the left-hand side of \eqref{y_1Y_2} is zero by part(i) and so is the right-hand side. Also, the first term of the right-hand side in \eqref{y_1Y_2} is zero as $(y_1 \otimes t^b)v=0$, and the last term of the summation is also zero by our assumption $(y_3 \otimes t^{b+j}).v=0$. Therefore, we have $Y_2(r-1,s)(y_1 \otimes t^j)^k (y_3 \otimes t^b)v=0$ for $r+s \geq N-2k$, i.e., $$Y_2(r,s)(y_1 \otimes t^j)^k (y_3 \otimes t^b)v=0, \text{ for } r+s \geq N-2k-1.$$

    (iii) The proof of this part is similar to the proof given in part (ii). Apply $(x_3 \otimes t^a)$ on $Y_2(r,s)(y_2 \otimes t^j)^k$ and follow the same steps given in part (ii).
    %Suppose $(x_1 \otimes t^{a+j}) v =0$.   Note that $(x_3 \otimes t^p)v=0$ for all $p \in \mathbb N$ as $\mathfrak{n}^+[t].v=0$. Since $[x_3, x_2]=0$, $$(x_3 \otimes t^a) y_2(r,s)(y_2 \otimes t^j)^k v=   y_2(r,s)(y_2 \otimes t^j)^k (x_3 \otimes t^a) v+ y_2(r-1,s)(y_2\otimes t^j)^k (x_1 \otimes t^i) v+ y_2(r,s)(y_2 \otimes t^j)^{k-1}(x_1 \otimes t^{a+j})v $$ For $r+s \geq N-2k$, the left-hand side is zero by part(i), so the right-hand side is zero. But the first term of the right-hand side is zero as $(x_1 \otimes t^a)v=0$, and the last term of the right-hand side is also zero by our assumption $(x_1 \otimes t^{a+j}).v=0$. So we get $y_2(r-1,s)(y_2 \otimes t^j)^k (x_1 \otimes t^i)v=0$ for $r+s \geq N-2k$, i.e., $$y_2(r,s)(y_2 \otimes t^j)^k (x_1 \otimes t^a)v=0, \text{ for } r+s \geq N-2k-1.$$

    (iv) Assume that $(x_1 \otimes t^{a+j}) v=0$ and $(y_3 \otimes t^{b+j}) v=0$ for some $a,b \in \mathbb N$. We have    
    %and $(y_2 \otimes t^{a+b+j})v=0$. 
    $$\begin{aligned}
       (x_3 &\otimes t^a)Y_2(r,s)(y_2 \otimes t^j)^k (y_3 \otimes t^b)v \\= &  Y_2(r,s)(y_2 \otimes t^j)^k (x_3 \otimes t^a) (y_3 \otimes t^b) v +Y_2(r-1,s)(y_2 \otimes t^j)^{k}(x_1 \otimes t^a)(y_3 \otimes t^{b})v\\ & + k Y_2(r,s)(y_2 \otimes t^j)^{k-1} (x_1 \otimes t^{a+j}) (y_3 \otimes t^{b})v\\
         =&Y_2(r,s)(y_2 \otimes t^j)^k (h_3 \otimes t^{a+b})v +Y_2(r-1,s)(y_2 \otimes t^j)^{k}(x_1 \otimes t^a)(y_3 \otimes t^{b})v\\ &+kY_2(r,s)(y_2 \otimes t^j)^{k-1}(y_2 \otimes t^{j+a+b})v %\quad (\because (x_3 \otimes t^{a})v =0= (x_1 \otimes t^{j+a}) v)
    \end{aligned}$$
   For $ r+s \geq N-2k-1$, $Y_2(r,s)(y_2 \otimes t^j)^k (y_3 \otimes t^b)v=0$. Therefore, the left-hand side is zero, and so is the right-hand side. Also, the first and the third term of the right-hand side is zero by using the assumption hypothesis. Therefore, we have $Y_2(r-1,s)(y_2 \otimes t^j)^k (x_1 \otimes t^a)(y_3 \otimes t^b)v=0$ for $r+s \geq N-2k-1$, i.e., $Y_2(r,s)(y_2 \otimes t^j)^k (x_1 \otimes t^a)(y_3 \otimes t^b)v=0 \text{ for } r+s \geq N-2k-2.$
\end{proof}
\subsection{} Given $\lambda \in P^+$ and a partition $\xi=(\xi_0\geq \xi_1 \geq \cdots \geq \xi_n>0)$ of $\lambda_2$ with $n+1$ parts, i.e., $|\xi|=\lambda_2$. Associate another two partitions $\hat{\xi}, \tilde{\xi}$ to $\xi$ as follows.
$$\hat{\xi} =(\xi_0 \geq \cdots \geq \xi_{n-1} \geq \xi_n-1 \geq 0), \quad 
  \tilde{\xi} =(\xi_0 \geq \cdots \geq \xi_{n-1} \geq \xi_n-2 \geq 0).$$
Note that $\tilde{\xi}$ is defined only if $\xi_n \geq 2.$ If $\xi_n =1$ then we take partition $\tilde{\xi}$ as empty partition. We denote a pair $(\lambda_1,\xi)$ by $\bxi$ and define pairs $\bxi^{\pm}$, $(\hat{\bxi})^-, (\tilde{\bxi})^-$ as follows. If $n=0$ then $\bxi^+=\bxi$ and  $\bxi^- =(\hat{\bxi})^- = (\tilde{\bxi})^- = (\lambda_1, \phi)$ are pairs corresponding to empty partition. If $n>0$ then we take $\bxi^+ = (\lambda_1, \xi^+)$ where $\xi^+=(\xi_0 \geq \xi_1\geq \cdots \geq \xi_\ell+1 \geq \xi_{\ell+1} \geq \cdots \geq \xi_{n-1} \geq \xi_n-1 \geq 0) $ and $\ell$ is the minimal integer such that $\xi_{\ell} = \xi_{n-1} $. Here $|\xi^+|=|\xi|$ and if $\xi^+:=(\xi^+_0 \geq \xi^+_1 \geq \cdots \geq \xi^+_n)$ then
\begin{equation}\label{xi_inequality}
    \sum\limits_{j\geq k+1}\xi_j^+ = \left\lbrace \begin{array}{ll}
   \sum\limits_{j\geq k+1}\xi_j,  &  1 \leq k < \ell \\
 -1+ \sum\limits_{j\geq k+1}\xi_j,  &  k \geq \ell
\end{array} \right.\end{equation}
For $n\in \mathbb Z_+$, Define a function $f_n (\bxi) = (\lambda_1 -n, \xi)$ and \begin{equation}\label{xi equations}
\begin{aligned}
    \xi^- =(\xi_0 \geq \cdots \geq \xi_{n-2} \geq \xi_{n-1}-\xi_n \geq 0), & \quad \bxi^- = (\lambda_1, \xi^-),   
  %(\hat{\xi})^- =(\xi_0 \geq \cdots \geq \xi_{n-2} \geq \xi_{n-1}- \xi_n+1 \geq 0), & \quad (\hat{\bxi})^- = (\lambda_1, (\hat{\xi})^-),\\ 
  %f_{\xi_n -1}((\hat{\bxi})^-) = (\lambda_1 - \xi_n +1, (\hat{\xi})^-) \\   
 % (\tilde{\xi})^- =(\xi_0 \geq \cdots \geq \xi_{n-2} \geq \xi_{n-1}-\xi_n+2 \geq 0), & \quad (\tilde{\bxi})^- = (\lambda_1, (\tilde{\xi})^-)  
\end{aligned}\end{equation}
%$|\xi_{\hat{n}}|=|\xi|-1$ and $|\xi_{\hat{n}^2}|=|\xi|-2$,
Similarly, we can define $(\hat{\xi})^-, (\hat{\bxi})^-, (\tilde{\xi})^-$ and $(\tilde{\bxi})^-$. Note that $|\xi^-|=|\xi|-2\xi_n$, $|(\hat{\xi})^-|=|\xi|-2\xi_n+1$, $|(\tilde{\xi})^-|=|\xi|-2\xi_n+2$. Observe that if $\xi_n=1$ then $\hat{\xi}=(\hat{\xi})^-$. If $\xi_{n-1} = \xi_n$, using \eqref{xi equations} we have $(\hat{\xi})^- = (\xi_0\geq \cdots \geq \xi_{n-2}\geq 1)$ and the operations given above,

\begin{equation}\label{xi^- equations}
\begin{aligned}
    \widehat{(\hat{\xi})^-} =(\xi_0 \geq \cdots \geq \xi_{n-2} \geq 0), & \quad \widehat{(\hat{\bxi})^-} = (\lambda_1, \widehat{(\hat{\xi})^-)},\\
    ((\hat{\xi})^-)^- =(\xi_0 \geq \cdots \geq \xi_{n-2} -1 \geq 0), & \quad ((\hat{\bxi})^-)^- = (\lambda_1, ((\hat{\xi})^-)^-),   \\ 
  %f_{\xi_n -1}((\hat{\bxi})^-) = (\lambda_1 - \xi_n +1, (\hat{\xi})^-) \\   
  ((\hat{\xi})^-)^+ =(\xi_0 \geq \cdots \geq \xi_l +1\geq \xi_{l+1} \geq \cdots \geq\xi_{n-2}\geq 0), & \quad ((\hat{\bxi})^-)^+ = (\lambda_1, ((\hat{\xi})^-)^+)  
\end{aligned}\end{equation}  
where $l$ is the least positive integer such that $\xi_l = \xi_{n-2}$.
\begin{lem}\label{surjective map}
  %  Let  $ \mathfrak{g}= \mathfrak{sl}(1|2)$,
  For $\lambda \in P^+$ and $\xi=(\xi_0 \geq \cdots \geq \xi_n>0)$ a partition of $\lambda_2.$ Let $\bxi = (\lambda_1,\xi)$ and if $n>0$, then there exists a surjective map $\phi : V(\bxi) \rightarrow V(\bxi^+)$ such that $\ker \phi = \bu(\mathfrak{g}[t])(y_2 \otimes t^n)^{\xi_n}v_{\boldsymbol{\xi}}$
\end{lem}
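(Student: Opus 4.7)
The plan is to establish the isomorphism $V(\bxi^+) \cong V(\bxi)/K$, where $K := \bu(\mfg[t])(y_2 \otimes t^n)^{\xi_n} v_{\bxi}$, by constructing the surjection $\phi$ and identifying its kernel with $K$.

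First, I would construct $\phi : V(\bxi) \twoheadrightarrow V(\bxi^+)$ sending $v_{\bxi}$ to $v_{\bxi^+}$. For this it suffices to verify that the defining CV relations of $V(\bxi)$ also hold in $V(\bxi^+)$. The key input is \eqref{xi_inequality}: the partial tail sums satisfy $\sum_{j \geq k+1} \xi_j^+ \leq \sum_{j \geq k+1} \xi_j$ for all $k$, with equality for $k < \ell$ and difference exactly one for $k \geq \ell$. Consequently, any relation $y_2(r,s) v = 0$ valid in $V(\bxi)$, witnessed by some index $k$ with $s > kr + \sum_{j \geq k+1} \xi_j$, automatically satisfies the weaker inequality $s > kr + \sum_{j \geq k+1} \xi_j^+$ required in $V(\bxi^+)$. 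A short check on the admissible range of $r$ (possibly shifting the witnessing index by one when $r$ lies at the boundary) confirms that every $V(\bxi)$-relation is still a $V(\bxi^+)$-relation, giving the desired surjection $\phi$.

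Next, to prove $K \subseteq \ker \phi$, I would show $(y_2 \otimes t^n)^{\xi_n} v_{\bxi^+} = 0$ in $V(\bxi^+)$. The starting point is the CV relation of $V(\bxi^+)$ at $(k, r) = (n-1, \xi_n)$: because $\xi_n^+ = \xi_n - 1$ and $\xi_{n-1}^+ = \xi_{n-1} \geq \xi_n$, the threshold becomes $s > (n-1)\xi_n + (\xi_n - 1) = n\xi_n - 1$, so $y_2(\xi_n, n\xi_n)\, v_{\bxi^+} = 0$. The operator $y_2(\xi_n, n\xi_n)$ expands as a sum of divided-power monomials $\prod_p (y_2 \otimes t^p)^{(b_p)}$ with $\sum_p b_p = \xi_n$ and $\sum_p p b_p = n \xi_n$, among which $(y_2 \otimes t^n)^{(\xi_n)}$ appears as exactly one summand. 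The remaining summands must be eliminated, and I plan to handle them by induction on a lexicographic statistic of the exponent vector $(b_p)$, rewriting each non-leading term using another CV relation of $V(\bxi^+)$ (at smaller $s$ or a different index $k$) to reduce it to an already-handled case.

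Finally, for $\ker \phi \subseteq K$, I would set $\bar V := V(\bxi)/K$ with image $\bar v$ of $v_{\bxi}$, and produce a surjection $V(\bxi^+) \twoheadrightarrow \bar V$ sending $v_{\bxi^+} \mapsto \bar v$; composition with $\phi$ then forces both maps to be isomorphisms, identifying $\ker \phi$ with $K$. The CV relations of $V(\bxi^+)$ not already present in $V(\bxi)$ are precisely those at indices $k \geq \ell$ whose threshold is lowered by one, and to derive each such relation inside $\bar V$ I would apply Lemma~\ref{technical lemma}: the quotient relation $(y_2 \otimes t^n)^{\xi_n} \bar v = 0$, iterated if necessary, furnishes a hypothesis of the form $(y_2 \otimes t^{j+1}) v' = 0$ on a suitable auxiliary vector $v'$, and parts (i)--(iv) of that lemma then yield the $Y_2(r,s)(y_2 \otimes t^j)^k$-vanishings at the lowered thresholds required. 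The main obstacle I anticipate is the cancellation in Step 2: isolating $(y_2 \otimes t^n)^{(\xi_n)}$ from $y_2(\xi_n, n\xi_n)$ means dispatching many sibling monomials via further CV relations, which is combinatorially delicate; once Step 2 is settled, Step 3 should go through cleanly given the way the Technical Lemma is tailored to convert a single $(y_2 \otimes t^{j+1})$-vanishing into the precise family of shifted $Y_2$-relations that distinguish the defining ideals of $V(\bxi^+)$ and $V(\bxi)$.
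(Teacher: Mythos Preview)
Your three–step outline matches the paper's structure, and Step~1 is handled exactly as in the paper via \eqref{xi_inequality}. But you misjudge where the difficulty lies.

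\textbf{Step 2 is not delicate.} Since the operators $y_2\otimes t^p$ all commute and $(y_2\otimes t^{n+1})v_{\bxi^+}=0$ (this is the CV relation at $k=n$, $r=1$), every summand of $y_2(\xi_n,n\xi_n)v_{\bxi^+}$ having some $b_p>0$ with $p>n$ vanishes by applying that factor first. The remaining constraint $\sum_{p\le n}pb_p=n\xi_n$ together with $\sum_{p\le n}b_p=\xi_n$ forces $b_n=\xi_n$ uniquely, so the only surviving term is $(y_2\otimes t^n)^{(\xi_n)}v_{\bxi^+}$. No lexicographic induction and no further CV relations are needed; the paper simply records this as a consequence of the presentation \eqref{3rd presentation}.

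\textbf{Step 3 via Lemma~\ref{technical lemma} does not work as you describe.} That lemma converts a hypothesis $Y_2(r,s)v=0$ for $r+s\ge N$ together with $(y_2\otimes t^{j+1})v=0$ into conclusions of the form $Y_2(r,s)(y_2\otimes t^j)^k v=0$ (and, in parts (ii)--(iv), the same with extra $x_1$ or $y_3$ factors). It does \emph{not} lower the threshold in a bare relation $Y_2(r,s)\bar v=0$, which is precisely what separates the defining ideal of $V(\bxi^+)$ from that of $V(\bxi)$; and invoking parts (ii)--(iv) is beside the point, since the present lemma concerns only the $\alpha_2$–copy of $\mathfrak{sl}_2$. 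The paper instead uses the finite presentation \eqref{3rd presentation}: comparing the generating relations $y_2\bigl(r,|(\cdot^{tr})^{(r)}|-r+1\bigr)v$ for $\xi$ and for $\xi^+$, the only genuinely new ones occur for $\xi_n\le r\le \xi_{n-1}$, and for each such $r$ the same commutativity--plus--degree argument as in Step~2 shows that every surviving summand of $y_2\bigl(r,\,r(n-1)+\xi_n\bigr)v_{\bxi}$ carries a factor $(y_2\otimes t^n)^{b_n}$ with $b_n\ge\xi_n$, hence lies in $K=\bu(\mfg[t])(y_2\otimes t^n)^{\xi_n}v_{\bxi}$. This gives $\ker\phi\subseteq K$ directly, without passing through the reverse surjection you propose.
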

\begin{proof}
    To prove that the map $\phi$ exists, it is enough to show that $$y_2 (r,s) v_{\boldsymbol{\xi}^+}=0, \forall s > kr+ \sum\limits_{j \geq k+1}^{n}\xi_j.$$
    It follows immediately as by \eqref{xi_inequality}, $\sum\limits_{j \geq k+1}^{n}\xi_j \geq \sum\limits_{j \geq k+1}^{n}\xi_j^+$. Using \eqref{3rd presentation}, we have $y_2(\xi_n, n\xi_n)v_{\boldsymbol{\xi}^+}=0$, i.e.,  $$(y_2 \otimes t^n)^{\xi_n}v_{\boldsymbol{\xi}^+}=0, \, i.e., \quad  (y_2 \otimes t^n)^{\xi_n}v_{\boldsymbol{\xi}} \in \ker \phi.$$ Therefore, $U(\mathfrak{g}[t])(y_2 \otimes t^n)^{\xi_n} v_{\boldsymbol{\xi}} \subseteq \ker\phi.$ On comparing the partitions $\xi$ and $\xi^+$, only $\xi_n$ and $\xi_\ell$ parts of $\xi$ are different from $\xi_n^+$ and $\xi_\ell^+$ of $\xi^+$ respectively where $\xi_\ell=\xi_{\ell+1}=\cdots= \xi_{n-1}.$ Thus, when $r<\xi_n$ or $r>\xi_\ell$, $$ y_2(r,s)v_{\boldsymbol{\xi}^+}=0, \quad \text{implies that}\quad y_2(r,s)v_{\boldsymbol{\xi}}=0 $$ because  $s=|((\xi^+)^{tr})^{(r)}|-r+1 = |(\xi^{tr})^{(r)}|-r+1 .$  
%    Indeed if $r< \xi_n$ or $r>\xi_{\ell}$, then $|((\xi^+)^{tr})^{(r)}|-r+1=$. % say $r=\xi_p$ for some $p<n$ then $s=1+n\xi_p-\xi_p$ as $s+r= 1+n\xi_p> n\xi_p>(n-p+1)\xi_p\geq \xi_p+ \xi_{p+1}+\xi_{p+2}+ \cdots+\xi_n= kr+ \xi_{p+1}+\xi_{p+2}+ \cdots+\xi_n$ for $k=1$.
We need to consider the case when $\xi_n \leq r \leq \xi_\ell=\xi_{n-1}$, i.e., $y_2(r, |((\xi^+)^{tr})^{(r)}|-r+1)v_{\boldsymbol{\xi}^+} =0$, implies $$\ker \phi= \left\langle y_2(r, |((\xi^+)^{tr})^{(r)}|-r+1)v_{\boldsymbol{\xi}}: \xi_n \leq r \leq \xi_{n-1} \right\rangle.$$ If $r=\xi_n$, then $|((\xi^+)^{tr})^{(r)}|-r+1=n\xi_n.$ If $\xi_n < r \leq \xi_\ell=\xi_{n-1}$ then  $|((\xi^+)^{tr})^{(r)}|-r+1= r(n-1)+\xi_n$, i.e, $y_2(r, |((\xi^+)^{tr})^{(r)}|-r+1)v_{\boldsymbol{\xi}}= y_2(r, r(n-1)+\xi_n)v_{\boldsymbol{\xi}}= (y_2 \otimes t^n)^{\xi_n} (y_2 \otimes t^{n-1})^{r-\xi_n}v_{\boldsymbol{\xi}} \in U(\mathfrak g[t] (y_2 \otimes t^n)^{\xi_n} v_{\boldsymbol{\xi}}.$ Thus $\ker\phi \subseteq U(\mathfrak{g}[t])(y_2 \otimes t^n)^{\xi_n} v_{\boldsymbol{\xi}}$. 
Hence $\ker\phi= U(\mathfrak{g}[t])(y_2 \otimes t^n)^{\xi_n} v_{\boldsymbol{\xi}}.$\\
\end{proof}
\subsection{} Let us do the analysis of the case $n=1$ and $\xi_{0}=\xi_1$. In the rest of the section, the symbol $\cong$ denotes the $\mathfrak{g}[t]$-isomorphism. 
\begin{thm}\label{n=1}
    Let $\lambda \in P^+$ and $\xi=(\xi_0\geq \xi_0>0)$ be a partition of $\lambda_2.$ Let $\bxi = (\lambda_1,\xi)$ and $v_{\boldsymbol{\xi}}$ be the generator of $V(\bxi)$, then there exists a short exact sequence of $\mathfrak{g}[t]$-modules
$$ 0 \longrightarrow \ker\phi \longrightarrow V(\bxi) \overset{\phi}{\longrightarrow} V(\bxi^+) \longrightarrow 0.$$ where $\ker\phi= \bu(\mathfrak{g}[t])(y_2 \otimes t)^{\xi_n}v_{\boldsymbol{\xi}}$. Moreover $\dim V(\bxi) = 4^{2}\xi_0^2$ and  $\ker\phi$ has a filtration\\ $\ker\phi=V_0 \supset V_1 \supset V_2 \supset (0)$
    %where $$\begin{array}{ll} V_3  = U(\mathfrak{g}[t])(y_2 \otimes t^n)^{\xi_n-2}(x_1 \otimes t^n)(y_3 \otimes t^n)v_{\boldsymbol{\xi}}, & V_2  = U(\mathfrak{g}[t])(y_2 \otimes t^n)^{\xi_n-1}(y_3 \otimes t^n)v_{\boldsymbol{\xi}}+V_3\\  V_1 =  U(\mathfrak{g}[t])(y_2 \otimes t^n)^{\xi_n-1}(x_1 \otimes t^n)v_{\boldsymbol{\xi}}+V_2, & V_0 = U(\mathfrak{g}[t])(y_2 \otimes t^n)^{\xi_n}v_{\boldsymbol{\xi}}+V_1 \end{array}$$ 
    such that $$\begin{array}{ll}V_0/V_1 \cong \tau_{\xi_0}  V(f_{\xi_0 -1}((\hat{\bxi})^-)),\quad V_1/V_2 \cong \tau_{\xi_0}  V(f_{\xi_0}((\hat{\bxi})^-)), \quad
   %  V_2/V_3 \cong_{\mathfrak{g}[t]} \tau_{n\xi_n} V(f_{\xi_n -2}((\hat{\bxi})^-)),  
     V_2/(0) \cong \left\lbrace \begin{array}{ll}
        \tau_{\xi_0} V(f_{\xi_0 -1}((\tilde{\bxi})^-))  &  \text{if } \xi_0 >1\\
        0  & \text{if } \xi_0 =1
     \end{array}\right.\end{array}$$
\end{thm}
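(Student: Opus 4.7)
The plan is to establish the short exact sequence directly from \lemref{surjective map}, construct the filtration of $\ker\phi$ by identifying distinguished generators in grade $\xi_0$, produce surjections from each target CV module onto the corresponding quotient $V_i/V_{i+1}$ by verifying the defining relations, and confirm the dimension formula by a count.

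The short exact sequence with $\ker\phi = U(\mathfrak{g}[t])(y_2 \otimes t)^{\xi_0} v_{\boldsymbol{\xi}}$ is exactly \lemref{surjective map} specialised to $n = 1$ and $\xi = (\xi_0, \xi_0)$. For the filtration of $\ker\phi$, I introduce four distinguished vectors of grade $\xi_0$: $w_0 := (y_2 \otimes t)^{\xi_0} v_{\boldsymbol{\xi}}$, $\tilde w_0 := (x_1 \otimes t)(y_2 \otimes t)^{\xi_0 - 1} v_{\boldsymbol{\xi}}$, $u_1 := (y_3 \otimes t)(y_2 \otimes t)^{\xi_0 - 1} v_{\boldsymbol{\xi}}$, and (for $\xi_0 \geq 2$) $u_2 := (x_1 \otimes t)(y_3 \otimes t)(y_2 \otimes t)^{\xi_0 - 2} v_{\boldsymbol{\xi}}$, of weights $(\lambda_1 - \xi_0, 0)$, $(\lambda_1 - \xi_0 + 1, 1)$, $(\lambda_1 - \xi_0, 1)$, and $(\lambda_1 - \xi_0 + 1, 2)$ respectively. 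The $\mathfrak{g}$-supercommutators $[x_3, y_2] = x_1$, $[y_1, y_2] = -y_3$, and $\{y_3, x_1\} = y_2$ yield $\tilde w_0 = \xi_0^{-1} x_3 w_0$, $u_1 = -\xi_0^{-1} y_1 w_0$, and allow one to place $u_2$ (up to correction terms in the same weight space) in $V_1 := U(\mathfrak{g}[t]) u_1$ via the computation of $x_3 u_1$. Setting $V_0 := \ker\phi$, $V_1 := U(\mathfrak{g}[t]) u_1$, $V_2 := U(\mathfrak{g}[t]) u_2$ (with $V_2 = 0$ when $\xi_0 = 1$), the required descending chain $V_0 \supset V_1 \supset V_2 \supset 0$ follows.

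For each $i$, I would construct a surjective $\mathfrak{g}[t]$-homomorphism from the claimed target module onto $V_i/V_{i+1}$, sending the generator of the target to the image in the quotient of $\tilde w_0$, $u_1$, and $u_2$, respectively. Since $(\hat{\xi})^- = (1)$ and $(\tilde{\xi})^- = (2)$ in this setting, the first two targets are local Weyl modules $W(\mu)$ with $\mu_2 = 1$ (of dimension $4$ by \propref{local_basis}), and the third is the graded Kac module $K(\mu)^0$ with $\mu_2 = 2$ (of dimension $8$ by \propref{dimension of kac}). Well-definedness of each map reduces to checking the defining relations of the target on the image vector: annihilation by $\mathfrak{n}^+[t]$ modulo $V_{i+1}$, trivial action of $\mathfrak{h} \otimes t\mathbb{C}[t]$, the Serre-like relation $(y_2 \otimes 1)^{\mu_2 + 1} = 0$, and for $V_2$ the additional evaluation condition $(y \otimes t^k) u_2 = 0$ for $y \in \mathfrak{g}$ and $k \geq 1$. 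These verifications rely on \lemref{technical lemma} for the $\mathfrak{sl}_{2, \alpha_2}$-part of the argument and on systematic supercommutator manipulations for the odd generators.

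Summing the quotient dimensions gives $\dim \ker\phi \leq 4 + 4 + 8 = 16$ (or $8$ when $\xi_0 = 1$). Combined with the short exact sequence and the known value of $\dim V(\bxi^+)$---equal to $\dim K(\lambda)^0 = 8$ when $\xi_0 = 1$ by \propref{dimension of kac}, and more generally extracted from the CV-module basis established in \cite{macedo}---this forces $\dim V(\bxi) = 16 \xi_0^2$ and makes each surjection above an isomorphism. The main obstacle in this plan is the verification that $\tilde w_0$, $u_1$, and $u_2$ indeed satisfy the defining relations of their respective target modules modulo the appropriate submodule: in particular, annihilation by higher-grade elements of $\mathfrak{n}^+[t]$ and $\mathfrak{h} \otimes t\mathbb{C}[t]$ demands careful tracking of supercommutators involving the odd generators $x_1, x_3, y_1, y_3$, alongside systematic reduction using the CV relations of $V(\bxi)$.
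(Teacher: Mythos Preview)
Your approach is essentially the paper's: the same short exact sequence from \lemref{surjective map}, the same filtration vectors $(y_2\otimes t)^{\xi_0-1}(x_1\otimes t)v_{\boldsymbol\xi}$, $(y_2\otimes t)^{\xi_0-1}(y_3\otimes t)v_{\boldsymbol\xi}$, $(y_2\otimes t)^{\xi_0-2}(x_1\otimes t)(y_3\otimes t)v_{\boldsymbol\xi}$, surjections from the three target CV modules, and a closing dimension count.

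There is, however, one step you have not addressed. You set $V_0=\ker\phi=U(\mathfrak g[t])w_0$ and then propose a map from the first target to $V_0/V_1$ hitting the image of $\tilde w_0$. For that map to be \emph{surjective} you need the image of $\tilde w_0$ to generate $V_0/V_1$, i.e.\ $w_0\in U(\mathfrak g[t])\tilde w_0+V_1$; this is not automatic, since $w_0$ and $\tilde w_0$ sit at different weights $(\lambda_1-\xi_0,0)$ and $(\lambda_1-\xi_0+1,1)$. The paper handles this explicitly: from the CV relation $y_2(\xi_0+1,\xi_0)v_{\boldsymbol\xi}=(y_2\otimes 1)(y_2\otimes t)^{\xi_0}v_{\boldsymbol\xi}=0$ it applies $x_3 y_1$ and obtains an identity of the form $c\,w_0+(y_3\otimes 1)\tilde w_0+x_3(y_2\otimes 1)u_1=0$, which places $w_0$ inside $U(\mathfrak g[t])\tilde w_0+V_1$ and hence forces $\ker\phi=U(\mathfrak g[t])\tilde w_0+V_1$. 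You should incorporate this (or an equivalent) computation before asserting surjectivity of $\phi_0$; it is precisely the ``key identity'' that makes the filtration exhaust $\ker\phi$.

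A smaller remark: your dimension argument imports $\dim V(\bxi^+)$ from the basis of \cite{macedo}. The paper instead couples the upper bound from the filtration with the fusion-product lower bound $\dim V(\boldsymbol\eta)\ge 4^{m+1}\eta_0\cdots\eta_m$ and runs an induction on $\xi_n$ (jointly with \thmref{shortexactsequence}), yielding an independent proof of $\dim V(\bxi)=16\xi_0^2$. Your shortcut is logically valid but forfeits that independence, which is part of the paper's point.
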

\begin{proof}
In $V(\bxi)$, we have the following relations \begin{equation}\label{relationsof V(xi)}
    (y_i \otimes t^2)v_{\boldsymbol{\xi}} = 0 = (x_1\otimes t^2)v_{\boldsymbol{\xi}},\, \text{for } i=2,3.
\end{equation}    Using \lemref{surjective map}, we have a surjective map $\phi: V(\bxi)\rightarrow V(\bxi^+)$ such that $\ker(\phi) = \bu (\mathfrak{g}[t])(y_2 \otimes t)^{\xi_0}v_{\boldsymbol{\xi}} $. Now we will prove that $(y_2 \otimes t)^{\xi_0}v_{\boldsymbol{\xi}}$ lies in the $\bu(\mathfrak{g}[t])$-module generated by the elements $\{(y_2 \otimes t)^{\xi_0 -1}(x_1 \otimes t)v_{\boldsymbol{\xi}}, (y_2 \otimes t)^{\xi_0 -1}(y_3 \otimes t)v_{\boldsymbol{\xi}}\}$.
%    if $\xi_0 \geq 2$ and $\{(x_1 \otimes t)v_{\boldsymbol{\xi}}, (y_3 \otimes t)v_{\boldsymbol{\xi}}\}$ if $\xi_0 =1$.
    Note that $y_2 (\xi_0 +1,\xi_0)v_{\boldsymbol{\xi}} =0$ implies that $(y_2 \otimes 1)(y_2 \otimes t)^{\xi_0}v_{\boldsymbol{\xi}}=0$. Thus $$\begin{array}{rl}
     x_3 y_1 (y_2 \otimes 1)(y_2 \otimes t)^{\xi_0}v_{\boldsymbol{\xi}} & =0  \\
      x_3\left((y_3 \otimes 1)(y_2 \otimes t)^{\xi_0}v_{\boldsymbol{\xi}}+ (y_2 \otimes 1)(y_2 \otimes t)^{\xi_0-1}(y_3 \otimes t)v_{\boldsymbol{\xi}} \right) & =0\\
      c(y_2 \otimes t)^{\xi_0}v_{\boldsymbol{\xi}}+(y_3 \otimes 1)(y_2 \otimes t)^{\xi_0-1}(x_1 \otimes t)v_{\boldsymbol{\xi}}+ x_3(y_2 \otimes 1)(y_2 \otimes t)^{\xi_0-1}(y_3 \otimes t)v_{\boldsymbol{\xi}}& =0
    \end{array}$$
    where $c\in \mathbb C$. Let us consider the first case when $\xi_0>1$. Therefore, we have a filtration of $\ker(\phi) = V_0\supset V_1 \supset V_2 \supset (0)$ where 
    $$\begin{aligned} V_0 &= \bu(\mathfrak{g}[t])(y_2 \otimes t)^{\xi_0 -1}(x_1 \otimes t)v_{\boldsymbol{\xi}}+V_1\\
     V_1 &= \bu(\mathfrak{g}[t])(y_2 \otimes t)^{\xi_0 -1}(y_3 \otimes t)v_{\boldsymbol{\xi}}+V_2\\
     V_2 &=
        \bu(\mathfrak{g}[t])(y_2 \otimes t)^{\xi_0 -2}(x_1 \otimes t)(y_3 \otimes t)v_{\boldsymbol{\xi}}. \end{aligned}$$ 
        Now we will prove that there exist surjective $\mathfrak{g}[t]$-module homomorphisms \\ $\phi_0:\tau_{\xi_0}V(f_{\xi_0-1}((\hat{\bxi})^-))\rightarrow V_0/V_1, \quad \phi_1:\tau_{\xi_0}V(f_{\xi_0}((\hat{\bxi})^-))\rightarrow V_1/V_2, \quad \phi_2:\tau_{\xi_0}V(f_{\xi_0-1}((\tilde{\bxi})^-))\rightarrow V_2/(0)$ such that $$v_0 \mapsto (y_2 \otimes t)^{\xi_0-1}(x_1 \otimes t)v_{\boldsymbol{\xi}}, \, v_1\mapsto (y_2 \otimes t)^{\xi_0-1}(y_3 \otimes t)v_{\boldsymbol{\xi}}, \, v_2\mapsto(y_2 \otimes t)^{\xi_0-1}(x_1 \otimes t)(y_3\otimes t)v_{\boldsymbol{\xi}}$$ where $v_0, v_1,$ and $v_2$ are the generators of CV-modules $\tau_{\xi_0}V(f_{\xi_0-1}((\hat{\bxi})^-)), \tau_{\xi_0}V(f_{\xi_0}((\hat{\bxi})^-))$ and $\tau_{\xi_0}V(f_{\xi_0-1}((\tilde{\bxi})^-))$ respectively. Let us prove the existence of the map $\phi_0$. For $r\geq 0$, $$\begin{array}{ll}
      (y_1 \otimes t^r)(y_2 \otimes t)^{\xi_0-1}(x_1 \otimes t)v_{\boldsymbol{\xi}}& = -(y_2 \otimes t)^{\xi_0-2}(x_1 \otimes t)(y_3 \otimes t^{r+1})v_{\boldsymbol{\xi}} \in V_1\quad (\text{using} \eqref{relationsof V(xi)}),  \\
            (x_2\otimes t^r)(y_2 \otimes t)^{\xi_0-1}(x_1 \otimes t)v_{\boldsymbol{\xi}} & = 0\\
            (x_3 \otimes t^r)(y_2 \otimes t)^{\xi_0-1}(x_1 \otimes t)v_{\boldsymbol{\xi}} & = (y_2 \otimes t)^{\xi_0-2}(x_1 \otimes t^{r+1})(x_1 \otimes t)v_{\boldsymbol{\xi}} =0 \quad (\text{using} \eqref{relationsof V(xi)})
        \end{array}$$
    Therefore, we have $\mathfrak{n}^+[t]v_{\boldsymbol{\xi}}=0$, $(h\otimes t^r)(y_2 \otimes t)^{\xi_0-1}(x_1 \otimes t)v_{\boldsymbol{\xi}} = 0 $ if $r>1$, $(h_1\otimes 1)(y_2 \otimes t)^{\xi_0-1}(x_1 \otimes t)v_{\boldsymbol{\xi}} = (\lambda_1 -\xi_0+1) (y_2 \otimes t)^{\xi_0-1}(x_1 \otimes t)v_{\boldsymbol{\xi}}$ and $(h_2 \otimes 1)(y_2 \otimes t)^{\xi_0-1}(x_1 \otimes t)v_{\boldsymbol{\xi}} = (y_2 \otimes t)^{\xi_0-1}(x_1 \otimes t)v_{\boldsymbol{\xi}}$. This proves that $(y_2 \otimes t)^{\xi_0-1}(x_1 \otimes t)v_{\boldsymbol{\xi}}$ is the highest weight vector of weight $(\lambda_1-\xi_0+1,1)$. To prove the existence of $\phi_0$, it is sufficient to prove that $(y_2 \otimes t)(y_2 \otimes t)^{\xi_0-1}(x_1 \otimes t)v_{\boldsymbol{\xi}} =0$. Now $$(y_2\otimes t)(y_2 \otimes t)^{\xi_0-1}(x_1 \otimes t)v_{\boldsymbol{\xi}} = (y_2 \otimes t)^{\xi_0}(x_1 \otimes t)v_{\boldsymbol{\xi}} = x_3y_{2}(\xi_0+1,\xi_0+1)v_{\boldsymbol{\xi}} =0.$$  Similarly, it is easy to prove the existence of $\phi_1$ and $\phi_2$. We have shown the existence of the map $\phi_i$. To show that each $\phi_i$ is an isomorphism, we refer to \remref{dim_argument} given at the end of this section. 
    
    The proof of the case $\xi_0 =1$ is similar to the proof given above by taking $V_2 =(0)$.
    \iffalse We have $$\begin{array}{ll}
    \dim V(\xi)&\leq \dim \Ker(\phi)+\dim V(\bxi^+)\\
    & \leq 2\dim (\tau_{\xi_0}V(f_{\xi_0-3}((\hat{\bxi})^-)))+\dim (\tau_{\xi_0}V(f_{\xi_0-5})((\tilde{\bxi})^-))+\dim V(\bxi^+)\\
    & = 2.4+
    \end{array}$$\fi      
\end{proof}
The following is the main result of this section.
\begin{thm} \label{shortexactsequence} Let  $\lambda \in P^+$ and $\xi=(\xi_0\geq \cdots \geq \xi_{n-1}\geq \xi_n>0)$ be a partition of $\lambda_2$ such that $\xi_{n-1}> \xi_n$ or $\xi_{n-1}=\xi_n\geq 2$. Let $\bxi = (\lambda_1,\xi)$, $v_{\boldsymbol{\xi}}$ be the generator of $V(\bxi)$ and if $n>0$, then there exists a short exact sequence of $\mathfrak{g}[t]$-modules
$$ 0 \longrightarrow \ker\phi \longrightarrow V(\bxi) \overset{\phi}{\longrightarrow} V(\bxi^+) \longrightarrow 0.$$ where $\ker\phi= \bu(\mathfrak{g}[t])(y_2 \otimes t^n)^{\xi_n}v_{\boldsymbol{\xi}}$. Moreover $\dim V(\xi) = 4^{n+1}\xi_0 \xi_1\cdots \xi_n$.
\begin{enumerate}
    \item[(i)] If $\xi_{n-1} > \xi_n$, then $\ker\phi$ has a filtration $\ker\phi=V_0 \supset V_1 \supset V_2 \supset V_3 \supset (0)$
    %where $$\begin{array}{ll} V_3  = U(\mathfrak{g}[t])(y_2 \otimes t^n)^{\xi_n-2}(x_1 \otimes t^n)(y_3 \otimes t^n)v_{\boldsymbol{\xi}}, & V_2  = U(\mathfrak{g}[t])(y_2 \otimes t^n)^{\xi_n-1}(y_3 \otimes t^n)v_{\boldsymbol{\xi}}+V_3\\  V_1 =  U(\mathfrak{g}[t])(y_2 \otimes t^n)^{\xi_n-1}(x_1 \otimes t^n)v_{\boldsymbol{\xi}}+V_2, & V_0 = U(\mathfrak{g}[t])(y_2 \otimes t^n)^{\xi_n}v_{\boldsymbol{\xi}}+V_1 \end{array}$$ 
    such that $$\begin{array}{ll}V_0/V_1 \cong \tau_{s} V(f_{\xi_n}(\bxi^-)), & V_1/V_2 \cong \tau_{s} V(f_{\xi_n -1}((\hat{\bxi})^-)),\\
     V_2/V_3 \cong \tau_{s} V(f_{\xi_n}((\hat{\bxi})^-)), \text{  and} &  V_3/(0) \cong \left\lbrace \begin{array}{ll}
        \tau_{s} V(f_{\xi_n -1}((\tilde{\bxi})^-))  &  \text{if } \xi_n >1\\
        0  & \text{if } \xi_n =1
     \end{array}\right.\end{array}.$$

\item[(ii)] If $\xi_{n-1}=\xi_n \geq 2$, $n\geq 2$
then $\ker\phi$ has a filtration $\ker\phi=V_0 \supset V_1 \supset V_2 \supset V_3 \supset V_4 \supset V_5 \supset (0)$ 
  %  where $$\begin{array}{ll} V_5  = U(\mathfrak{g}[t])(y_2 \otimes t^n)^{\xi_n-2}(x_1 \otimes t^n)(y_3 \otimes t^n)v_{\boldsymbol{\xi}}, & V_4 = U(\mathfrak{g}[t])(x_1 \otimes t^{n-1})(y_2 \otimes t^n)^{\xi_n-1}(x_1 \otimes t^n)v_{\boldsymbol{\xi}}+V_5\\ V_3 = U(\mathfrak{g}[t])(y_2 \otimes t^{n-1})(y_2 \otimes t^n)^{\xi_n-1}(x_1 \otimes t^n)v_{\boldsymbol{\xi}}+V_4, & V_2 = U(\mathfrak{g}[t])(y_2 \otimes t^n)^{\xi_n-1}(x_1 \otimes t^n)v_{\boldsymbol{\xi}}+V_3 \\ V_1 =  U(\mathfrak{g}[t])(y_2 \otimes t^n)^{\xi_n-1}(y_3 \otimes t^n)v_{\boldsymbol{\xi}}+V_2, & V_0 = U(\mathfrak{g}[t])(y_2 \otimes t^n)^{\xi_n}v_{\boldsymbol{\xi}}+V_1 \end{array}$$ 
  such that $$\begin{array}{ll}V_0/V_1 \cong \tau_{s} V(f_{\xi_n}(\bxi^-)), & V_1/V_2 \cong 
  %\begin{cases}
      \tau_{n\xi_n} V(f_{\xi_n -1}(((\hat{\bxi})^-)^+)), \\ %n\geq2\\ \tau_{\xi_n} V(f_{\xi_n -3}(\boldsymbol{0})),& n=1
  %\end{cases} \\
     V_2/V_3 \cong
     %\begin{cases}
         \tau_{s +n-1} V(f_{\xi_n}(((\hat{\bxi})^-)^-)),
         %& n\geq 2\\
         %\tau_{\xi_n} V(f_{\xi_n -2}(\boldsymbol{0})), & n=1
     %\end{cases},   
     &  V_3/V_4 \cong
     %\begin{cases}
         \tau_{s+n-1} V(f_{\xi_n -1}(\widehat{(\hat{\bxi})^-)}), \\
         %& n\geq 2\\
         %\tau_{\xi_n} V(f_{\xi_{n}-4} (\boldsymbol{0})), & n=1
     %\end{cases}
V_4/V_5 \cong \tau_{s} V(f_{\xi_n}((\hat{\bxi})^-)),  & 
     V_5/(0) \cong
     %\left\lbrace \begin{array}{ll}
        \tau_{s} V(f_{\xi_n -1}((\tilde{\bxi})^-)  
        %&  \text{if } \xi_n >1\\
        %0  & \text{if } \xi_n =1
     %\end{array}\right.
     \end{array}.$$
\iffalse \item[(iii)] If $\xi_{n-1} = \xi_n =1$, $n\geq 2$ choose minimal $l \in \{0,1,\cdots,n\}$ such that $\xi_{l-1}>\xi_l = \cdots = \xi_n$ then $\ker (\phi) = \begin{cases}
    V_0 \supset V_1 \supset \cdots \supset V_{2(n-l+1)}\supset (0) & \text{if } l\geq 1\\
    V_0 \supset V_1 \supset \cdots \supset V_{2n-1}\supset (0) & \text{if } l=0
\end{cases}$  such that $$\begin{array}{ll}
\frac{V_i}{V_{i+1}}\cong \begin{cases} 
\tau_{p} V(s-3,\xi^{'} \geq \xi_{l-1}>1^{n-l-1-k} ), \, i=2k,\\
\tau_{p} V(s,\, \xi^{'} \geq \xi_{l-1}\geq 2>1^{n-l-2-k} ),\, i=2k+1, \end{cases} \quad 0\leq k \leq n-l-2, \\ 
\frac{V_{i}}{V_{i+1}} \cong \begin{cases} 
\tau_{p} V(s-3,\, \xi^{'} \geq \xi_{l-1}+n-l-k-1), \, i=2k,\\
\tau_{p}V(s,\, \xi^{'} \geq \xi_{l-1}+ n-l-k ),\, i=2k+1, \end{cases} \quad n-l-1\leq k \leq n-l,\quad l\geq 1  \\ 
%\frac{V_i}{V_{i+1}}\cong \tau_p V(\boldsymbol{0}), \, i=2k,2k+1, \, n-1\leq k \leq n,\, l=0 &  
V_{2(n-l+1)}/(0) \cong \tau_n V(f_{-1}((\hat{\bxi})^-)),\, l\geq 1, \quad V_{2n-2}/V_{2n-1}\cong \tau_{\frac{n(n+1)}{2}}V(1) ,\quad V_{2n-1}/(0)\cong \tau_n V(f_{-1}((\hat{\bxi})^-)),\, l=0 \end{array}$$     \fi 
where $ s = n\xi_n;\,  \bxi^+,\bxi^-, (\hat{\bxi})^-, 
((\hat{\bxi})^-)^+, ((\hat{\bxi})^-)^-, \widehat{(\hat{\bxi})^-}, \text{ and } (\tilde{\bxi})^-$ are defined in \eqref{xi equations} and \eqref{xi^- equations}. 
%$p = \frac{(k+1)(2n-k)}{2}, s = \lambda_1+2k+2, \xi^{'} = (\xi_0\geq \cdots \xi_{l-2})$, 

%$ \begin{array}{ll} ((\hat{\bxi}_1)^-)^+ =(\lambda_1-\xi_n +1, ((\hat{\xi})^-)^+); & ((\hat{\xi})^-)^+ = (\xi_0 \geq \xi_1 \geq \cdots \geq \xi_{n-2}+1),  \\ ((\hat{\bxi}_1)^-)^- = (\lambda_1 - \xi_n , ((\hat{\xi})^-)^- ); & ((\hat{\xi})^-)^- = (\xi_0 \geq \xi_1 \geq \cdots \geq \xi_{n-2}-1),\\  (((\widehat{\hat{\bxi}_1)^-)}_1)^- = (\lambda_1 - \xi_n +1, (\widehat{(\hat{\xi})^-})^- ); & (\widehat{(\hat{\xi})^-})^- = (\xi_0 \geq \xi_1 \geq \cdots \geq \xi_{n-2})\end{array} 

%(\bxi^0_{\hat{n}})^- = (\lambda_1 - \xi_n,(\xi_{\hat{n}})^-); \quad (\xi_{\hat{n}})^- = (\xi_0 \geq \xi_1 \geq \cdots \geq \xi_{n-2} \geq 1), \quad 
   %{}_2{\bxi^-_{\hat{n}}}  (\xi_0 \geq \xi_1 \geq \cdots \geq \xi_{n-2}-1) $
\end{enumerate} 
\end{thm}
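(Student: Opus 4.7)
The plan is to model the proof on the $n=1$ case treated in \thmref{n=1}: first obtain the short exact sequence from \lemref{surjective map}, then exhibit a filtration of $\ker\phi$ by identifying natural generators, and finally prove each successive quotient is the asserted CV module via a surjection followed by a dimension count.

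First, applying \lemref{surjective map} directly gives the surjection $\phi:V(\bxi)\to V(\bxi^+)$ with $\ker\phi=\bu(\mfg[t])(y_2\otimes t^n)^{\xi_n}v_{\bxi}$, so the short exact sequence is immediate. The substantive work is to decompose $(y_2\otimes t^n)^{\xi_n}v_{\bxi}$. The relation $y_2(\xi_n+1,n\xi_n)v_{\bxi}=0$ forces $(y_2\otimes 1)(y_2\otimes t^n)^{\xi_n}v_{\bxi}=0$; applying $x_3y_1$ to this identity and moving $y_1,x_3$ past the $y_2$'s using $[y_1,y_2]=y_3$, $[x_3,y_3]=h_3$, $[x_3,y_2]=-x_1$, yields (as in the $n=1$ case) an expression writing a scalar multiple of $(y_2\otimes t^n)^{\xi_n}v_{\bxi}$ as a sum of $(y_2\otimes t^n)^{\xi_n-1}(x_1\otimes t^n)v_{\bxi}$ and $(y_2\otimes t^n)^{\xi_n-1}(y_3\otimes t^n)v_{\bxi}$ type terms (together, when $\xi_n\geq 2$, with $(y_2\otimes t^n)^{\xi_n-2}(x_1\otimes t^n)(y_3\otimes t^n)v_{\bxi}$).

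For case (i) when $\xi_{n-1}>\xi_n$, I would define
\[
V_1=\bu(\mfg[t])\bigl\{(y_2\otimes t^n)^{\xi_n-1}(x_1\otimes t^n)v_{\bxi},\,(y_2\otimes t^n)^{\xi_n-1}(y_3\otimes t^n)v_{\bxi},\,(y_2\otimes t^n)^{\xi_n-2}(x_1\otimes t^n)(y_3\otimes t^n)v_{\bxi}\bigr\},
\]
and then $V_2,V_3$ by successively dropping the three listed generators, with $V_0/V_1$ generated by the image of $(y_2\otimes t^n)^{\xi_n}v_{\bxi}$ itself. For each generator $v_i$ of $V_i/V_{i+1}$, I would check that it is a highest weight vector of the prescribed weight and that $(h\otimes t^p)v_i=0$ for $p\geq 1$ beyond the relevant shift. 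The key step is to verify the defining CV relations for the target modules $V(f_{\xi_n}(\bxi^-))$, $V(f_{\xi_n-1}((\hat{\bxi})^-))$, etc.: these come from applying \lemref{technical lemma}(i)--(iv) to the generator $v_{\bxi}$, with $N=\xi_n+|\xi|$ and the appropriate values of $j,a,b,k$ read off from each generator. This gives surjective $\mfg[t]$-module homomorphisms from the shifted CV modules onto each $V_i/V_{i+1}$.

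Case (ii), with $\xi_{n-1}=\xi_n\geq 2$, is the main obstacle: the extra coincidence $\xi_{n-1}=\xi_n$ forces additional relations because certain elements that were free in case (i) now lie in larger CV modules (those indexed by $((\hat\bxi)^-)^+$ or $((\hat\bxi)^-)^-$ or $\widehat{(\hat\bxi)^-}$, cf.\ \eqref{xi^- equations}). Concretely, one must split $V_1$ from case (i) further by isolating the subspaces on which $(y_2\otimes t^{n-1})^{\xi_n}$ or $(y_2\otimes t^{n-1})^{\xi_n-1}$ acts non-trivially, giving the six-step filtration. The grade shifts $s=n\xi_n$ and $s+n-1$ arise from $(y_2\otimes t^n)^{\xi_n-1}(x_1\otimes t^n)v_{\bxi}$ lying in grade $s$, whereas bringing in a $(y_2\otimes t^{n-1})$ factor lowers the degree by one in one factor while moving us into a CV module for a smaller partition; a careful bookkeeping of which grade-shift and which associated partition is produced at each step is exactly what is encoded by \eqref{xi^- equations}. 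For each of the six generators I would again apply \lemref{technical lemma} to produce the CV relations in the target, obtaining surjections.

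Finally, to upgrade each surjection $\phi_i$ to an isomorphism I would use induction on $|\xi|$: the dimension formula $\dim V(\bxi)=4^{n+1}\xi_0\xi_1\cdots\xi_n$ holds by induction for the smaller partitions $\bxi^+,\bxi^-,(\hat\bxi)^-,(\tilde\bxi)^-$, and summing the purported dimensions of the subquotients together with $\dim V(\bxi^+)$ gives exactly $4^{n+1}\xi_0\cdots\xi_n$. On the other hand the basis of \propref{local_basis} gives $\dim V(\bxi)\geq 4^{n+1}\xi_0\cdots\xi_n$ (as the quotient of $W(\lambda)$ by the CV-relations still contains enough monomials), forcing equality at every step and collapsing each surjection to an isomorphism. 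The base cases are $n=0$ (the generalized Kac module, of dimension $4\xi_0$, by \propref{dimension of kac}) and $n=1$ (\thmref{n=1}).
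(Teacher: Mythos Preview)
Your overall architecture matches the paper's: \lemref{surjective map} for the short exact sequence, the same generators for the filtration in case~(i), \lemref{technical lemma} to verify the CV relations on each subquotient, and a dimension squeeze at the end. However, two steps in your outline are actual gaps, not just imprecision.

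\textbf{The lower bound for $\dim V(\bxi)$.} You write that \propref{local_basis} gives $\dim V(\bxi)\geq 4^{n+1}\xi_0\cdots\xi_n$ ``as the quotient of $W(\lambda)$ by the CV relations still contains enough monomials''. This does not follow. \propref{local_basis} only gives a basis of $W(\lambda)$ of size $4^{\lambda_2}$; since $V(\bxi)$ is a quotient, all you get is an upper bound $\dim V(\bxi)\leq 4^{\lambda_2}$. Asserting that a specific set of $4^{n+1}\prod\xi_k$ monomials remains linearly independent in the quotient is exactly the statement one is trying to prove, and is nowhere established. In the paper the lower bound comes from a completely different source: the surjection
\[
V(\bxi)\twoheadrightarrow K(a_0,\xi_0)^{z_0}\ast\cdots\ast K(a_n,\xi_n)^{z_n}
\]
onto a fusion product of generalized Kac modules (\cite[Proposition~4.4]{macedo}), together with $\dim K(a_k,\xi_k)=4\xi_k$ from \propref{dimension of kac}. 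Without this, your dimension squeeze cannot close.

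\textbf{The induction scheme.} You propose induction on $|\xi|$. But $|\xi^+|=|\xi|$, so the inductive hypothesis does not apply to $V(\bxi^+)$ and you cannot feed $\dim V(\bxi^+)$ into the inequality $\dim V(\bxi)\leq \dim\ker\phi+\dim V(\bxi^+)$. The paper instead uses a double induction: outer on $n$ (the partitions $\xi^-,(\hat\xi)^-,(\tilde\xi)^-,\ldots$ all have at most $n$ parts) and inner on $\xi_n$ (since $\xi_n^+=\xi_n-1$). This is what makes the recursion terminate.

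A smaller point: your description of the case~(ii) refinement (``subspaces on which $(y_2\otimes t^{n-1})^{\xi_n}$ acts non-trivially'') is off. The two extra generators inserted between $V_1$ and what was $V_2$ in case~(i) are $(y_2\otimes t^{n-1})(y_2\otimes t^n)^{\xi_n-1}(x_1\otimes t^n)v_{\bxi}$ and $(y_2\otimes t^n)^{\xi_n-1}(x_1\otimes t^{n-1})(x_1\otimes t^n)v_{\bxi}$, each involving a single extra factor at level $t^{n-1}$; the point is that when $\xi_{n-1}=\xi_n$ the partition $(\hat\xi)^-$ ends in $1$, so $V(f_{\xi_n-1}((\hat\bxi)^-))$ is itself too large to be the subquotient $V_1/V_2$ and must be replaced by its own $\phi$-quotient $V(f_{\xi_n-1}(((\hat\bxi)^-)^+))$, with the corresponding kernel contributing the two new layers.
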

The proof of the above theorem covers the rest of the section. 
\subsection{} \textit{proof of part (i) of \thmref{shortexactsequence}.}\\
Let us consider the case when $\xi_{n-1} > \xi_n >1$. Define $$\begin{array}{ll} V_3  = U(\mathfrak{g}[t])(y_2 \otimes t^n)^{\xi_n-2}(x_1 \otimes t^n)(y_3 \otimes t^n)v_{\boldsymbol{\xi}}, & V_2  = U(\mathfrak{g}[t])(y_2 \otimes t^n)^{\xi_n-1}(y_3 \otimes t^n)v_{\boldsymbol{\xi}}+V_3\\  V_1 =  U(\mathfrak{g}[t])(y_2 \otimes t^n)^{\xi_n-1}(x_1 \otimes t^n)v_{\boldsymbol{\xi}}+V_2, & V_0 = U(\mathfrak{g}[t])(y_2 \otimes t^n)^{\xi_n}v_{\boldsymbol{\xi}}+V_1 \end{array}$$ then $\Ker \phi = V_0 \supset V_1 \supset V_2 \supset V_3 \supset (0)$. We will prove that there exist surjective $\mathfrak{g}[t]$-module homomorphisms   $\phi_0:\tau_{s} V(f_{\xi_n}(\bxi^-)) \rightarrow V_0/V_1$, $\phi_1: \tau_{s} V(f_{\xi_n -1}((\hat{\bxi})^-)) \rightarrow V_1/V_2$, $\phi_2: \tau_{s} V(f_{\xi_n}((\hat{\bxi})^-)) \rightarrow V_2/V_3$ and $\phi_3: \tau_{s} V(f_{\xi_n -1}((\tilde{\bxi})^-))  \rightarrow V_3/(0)$ such that $\phi_0(v_0)= (y_2\otimes t^n)^{\xi_n} v_{\boldsymbol{\xi}}$, $\phi_1(v_1)= (y_2\otimes t^n)^{\xi_n-1}(x_1 \otimes t^n) v_{\boldsymbol{\xi}}$,  $\phi_2(v_2)= (y_2\otimes t^n)^{\xi_n-1}(y_3 \otimes t^n) v_{\boldsymbol{\xi}}$, and $\phi_3(v_3)= (y_2\otimes t^n)^{\xi_n-2}(x_1 \otimes t^n)(y_3 \otimes t^n) v_{\boldsymbol{\xi}}$, where $s = n\xi_n; \, v_0,v_1,v_2$ and $v_3$ are the generators of $\tau_s V(f_{\xi_n}(\bxi^-))$, $\tau_s V(f_{\xi_n -1}((\hat{\bxi})^-))$, $\tau_s V(f_{\xi_n}((\hat{\bxi})^-))$ and $\tau_s V(f_{\xi_n -1}((\tilde{\bxi})^-))$ respectively.

Note that $$\begin{array}{ll}
 ( y_{1}\otimes t^r) (y_2\otimes t^n)^{\xi_n} v_{\boldsymbol{\xi}}  &= (y_2\otimes t^n)^{\xi_n-1} (y_3 \otimes t^{n+r})v_{\boldsymbol{\xi}}=0\mod V_1,\forall r\geq 0 \\
    (x_3 \otimes t^r) (y_2\otimes t^n)^{\xi_n} v_{\boldsymbol{\xi}} &=  (y_2\otimes t^n)^{\xi_n-1} (x_1 \otimes t^{n+r})v_{\boldsymbol{\xi}}=0\mod V_1, \forall r\geq 0\\
    (x_2\otimes t^r)(y_2\otimes t^n)^{\xi_n} v_{\boldsymbol{\xi}} &= c_1  (y_2\otimes t^n)^{\xi_n-2} (y_2 \otimes t^{2n+r})v_{\boldsymbol{\xi}}=0,\, c_1 \in \mathbb Z, \forall r\geq 0\\
    (h \otimes t^r).(y_2\otimes t^n)^{\xi_n} v_{\boldsymbol{\xi}} &= c_2(y_2\otimes t^n)^{\xi_n-1} (y_2 \otimes t^{n+r})v_{\boldsymbol{\xi}} =0, c_2 \in \mathbb Z , \forall \, r \geq 1
    %(h_i\otimes 1)(y_2\otimes t^n)^{\xi_n} v_{\boldsymbol{\xi}} &= (\lambda_i -i\xi_n)(y_2\otimes t^n)^{\xi_n} v_{\boldsymbol{\xi}}, \forall i=1,2.
\end{array}$$
 Since $\phi^+=\{ -\alpha_1, \alpha_2,\alpha_3\}$, we have $\mathfrak n^+[t] (y_2\otimes t^n)^{\xi_n} v_{\boldsymbol{\xi}}=0$ in $V_0/V_1 $ and $(h_i \otimes 1)(y_2\otimes t^n)^{\xi_n} v_{\boldsymbol{\xi}} = (\lambda_i-i\xi_n)(y_2\otimes t^n)^{\xi_n} v_{\boldsymbol{\xi}}$ for $i=1,2$. Thus $(y_2\otimes t^n)^{\xi_n} v_{\boldsymbol{\xi}}$ is the highest weight vector of weight $(\lambda_1-\xi_n, \lambda_2-2\xi_n).$ To prove the existence of map $\phi_0$, it is enough to show that $$y_2(r,s) (y_2\otimes t^n)^{\xi_n} v_{\boldsymbol{\xi}}=0, \forall s > kr+\sum\limits_{j \geq k+1} \xi_j^-.$$ This follows using the similar arguments given in \cite[Proposition 6.7]{cv}.\\
 Now, we will show the existence of the map $\phi_1$. Using similar arguments given above and \eqref{ysqre equation}, it is easy to check that $\mathfrak{n}^+[t] (y_2\otimes t^n)^{\xi_n-1}(x_1 \otimes t^n) v_{\boldsymbol{\xi}} =0,\quad (h\otimes t^r)(y_2\otimes t^n)^{\xi_n-1}(x_1 \otimes t^n) v_{\boldsymbol{\xi}} =0,\, r\geq 1$ in $V_1/V_2$. Note that $(h_i\otimes 1) (y_2\otimes t^n)^{\xi_n-1}(x_1 \otimes t^n) v_{\boldsymbol{\xi}}= (\lambda_i -i\xi_n +1) (y_2\otimes t^n)^{\xi_n-1}(x_1 \otimes t^n) v_{\boldsymbol{\xi}}
 $ for $i=1,2$. Thus $(y_2 \otimes t^n)^{\xi_n-1}(x_1 \otimes t^n) v_{\boldsymbol{\xi}}$ is the highest weight vector in $V_1/V_2$ of weight $(\lambda_1-\xi_n+1, \lambda_2-2\xi_n+1)$. To prove the existence of the map $\phi_1$, it is enough to show that \begin{equation} \label{Y2x1}
     Y_2(r,s) (y_2\otimes t^n)^{\xi_n-1}(x_1 \otimes t^n) v_{\boldsymbol{\xi}}=0, \forall s > kr+\sum\limits_{j \geq k+1} (\hat{\xi})_j^-.
 \end{equation}
 %Note that $(\hat{\xi})^-=(\xi_0\geq \xi_1\geq \cdots \geq \xi_{n-2}\geq \xi_{n-1}-\xi_n+1)$ and $\xi=(\xi_0 \geq \xi_1 \geq \cdots \geq \xi_{n-2}\geq \xi_{n-1}\geq \xi_n) $.
 If $k \leq n-2$, $ kr+\sum\limits_{j \geq k+1} (\hat{\xi})^-_j = kr+ \sum\limits_{j \geq k+1} \xi_j -2\xi_n+1$. Since $Y_2(r,s)v_{\boldsymbol{\xi}}=0$ for $r+s > r+kr+\sum\limits_{j \geq k+1} \xi_j$ and $(x_1 \otimes t^{2n})v_{\boldsymbol{\xi}}=0$, then result follows from \lemref{technical lemma}(iii). If $k = n-1$ then $s > (n-1)r$ which implies that $y_2(r,s)= \sum\limits_{p \geq n} X_p (y_2 \otimes t^p)$ for some $X_p \in U(\mathfrak n^-[t])$ which gives \begin{equation}\label{s.2}
 y_2(r,s)(y_2 \otimes t^n)^{\xi_n-1}(x_1 \otimes t^n)v_{\boldsymbol{\xi}}= \sum\limits_{p \geq n} X_p (y_2 \otimes t^p)(y_2 \otimes t^n)^{\xi_n-1}(x_1 \otimes t^n)v_{\boldsymbol{\xi}}.
 \end{equation} Using \eqref{3rd presentation}, we have $y_2(\xi_n+1, n(\xi_n+1))v_{\boldsymbol{\xi}}=0$, i.e., $(y_2 \otimes t^n)^{\xi_n+1}v_{\boldsymbol{\xi}}=0.$ Thus $(y_2 \otimes t^n)^{\xi_n}(x_1 \otimes t^n)v_{\boldsymbol{\xi}}=0.$ Hence the right-hand side of \eqref{s.2} is zero, so is the left-hand side. 
 %Hence, $y_2(r,s) (y_2\otimes t^n)^{\xi_n-1}(x_1 \otimes t^n) v_{\boldsymbol{\xi}}=0, \forall s > kr+\sum\limits_{j \geq k+1} (\hat{\xi})_j^-.$ 
 Therefore, the map $\phi_1 $ exists and is well defined. Using similar arguments and parts (ii) and (iv) of \lemref{technical lemma}, we can easily verify the existence of map $\phi_2$ and $\phi_3$, respectively. Also, define $V_3 =(0)$ when $\xi_{n-1}>\xi_n =1$ and a similar proof holds. The existence of each $\phi_i$ is established above. To show that each $\phi_i$ is an isomorphism, we refer to \remref{dim_argument} given at the end of this section.
\subsection{} \textit{proof of part (ii) of \thmref{shortexactsequence}.}\\
Let us consider the case when $\xi_{n-1} = \xi_{n} >1$. Define $$\begin{array}{ll} V_5  = U(\mathfrak{g}[t])(y_2 \otimes t^n)^{\xi_n-2}(x_1 \otimes t^n)(y_3 \otimes t^n)v_{\boldsymbol{\xi}}, & V_3 = U(\mathfrak{g}[t])(y_2 \otimes t^n)^{\xi_n-1}(x_1 \otimes t^{n-1})(x_1 \otimes t^n)v_{\boldsymbol{\xi}}+V_4\\ V_4 =  U(\mathfrak{g}[t])(y_2 \otimes t^n)^{\xi_n-1}(y_3 \otimes t^n)v_{\boldsymbol{\xi}}+V_5,   & V_2 = U(\mathfrak{g}[t])(y_2 \otimes t^{n-1})(y_2 \otimes t^n)^{\xi_n-1}(x_1 \otimes t^n)v_{\boldsymbol{\xi}}+V_3,\\  V_1 = U(\mathfrak{g}[t])(y_2 \otimes t^n)^{\xi_n-1}(x_1 \otimes t^n)v_{\boldsymbol{\xi}}+V_2  & V_0 = U(\mathfrak{g}[t])(y_2 \otimes t^n)^{\xi_n}v_{\boldsymbol{\xi}}+V_1 \end{array}$$ 
%Let $V((\bxi^0)^-)$, $V((\bxi^0_{\hat n})^-)$, $V(((\bxi^1_{\hat n})^-)^+)$, $V((((\bxi^1_{\hat n})^-)^0)^-)$, $V((((\bxi^1_{\hat n})^-)^1_{n-1})^-)$ and $V((\bxi^1_{\hat n})^-)$ respectively. First 
We will prove that there exist surjective $\mathfrak g[t]$-module homomorphisms when $n\geq 2$. $$\begin{array}{ll}
\phi_0: \tau_{s} V(f_{\xi_n}(\bxi^-)) \longrightarrow V_0/V_1, &\quad \phi_1:  \tau_{s} V(f_{\xi_n -1}(((\hat{\bxi})^-)^+)) \longrightarrow V_1/V_2,\\ 
\phi_2: \tau_{s +n-1} V(f_{\xi_n}(((\hat{\bxi})^-)^-)) \longrightarrow V_2/V_3, &\quad \phi_3: \tau_{s+n-1} V(f_{\xi_n -1}(\widehat{(\hat{\bxi})^-)}) \longrightarrow V_3/V_4, \\
\phi_4: \tau_{s} V(f_{\xi_n}((\hat{\bxi})^-))\longrightarrow V_4/V_5, &\quad \phi_5:  \tau_{s} V(f_{\xi_n -1}((\tilde{\bxi})^-) \longrightarrow V_5/(0)
\end{array}$$
such that $\phi_0(v_0)= (y_2\otimes t^n)^{\xi_n} v_{\boldsymbol{\xi}}$, $\phi_1(v_1)= (y_2\otimes t^n)^{\xi_n-1}(x_1 \otimes t^n) v_{\boldsymbol{\xi}}$,  $\phi_2(v_2)= (y_2 \otimes t^{n-1})(y_2\otimes t^n)^{\xi_n-1}(x_1 \otimes t^n) v_{\boldsymbol{\xi}}$, $\phi_3(v_3)= (y_2\otimes t^n)^{\xi_n-1}(x_1 \otimes t^{n-1})(x_1 \otimes t^n) v_{\boldsymbol{\xi}}$, $\phi_4(v_4)=(y_2\otimes t^n)^{\xi_n-1}(y_3 \otimes t^n) v_{\boldsymbol{\xi}}$ and $\phi_5(v_5)= (y_2\otimes t^n)^{\xi_n-2}(x_1 \otimes t^n)(y_3 \otimes t^n) v_{\boldsymbol{\xi}}$, where $v_0, v_1, v_2, v_3,v_4$ and $v_5$ denote the generators of $\tau_s V(f_{\xi_n}(\bxi^-)), \tau_s V(f_{\xi_n -1}(((\hat{\bxi})^-)^+)), \tau_{s+n-1}V(f_{\xi_n}(((\hat{\bxi})^-)^-)),\tau_{s+n-1}V(f_{\xi_n -1}(\widehat{(\hat{\bxi})^-)}),$ $ \tau_s V(f_{\xi_n }((\hat{\bxi})^-))$ and $\tau_s V(f_{\xi_n -1}((\tilde{\bxi})^-)$ respectively.\\
The existence of maps $\phi_0, \phi_4, \phi_5$ has already been proved in part(i). Here, we will prove the existence of $\phi_1, \phi_2, \text{ and } \phi_3$. Let us begin with $\phi_1$. 
\iffalse Note that $$\begin{array}{rl}
     (y_1 \otimes t^r)(y_2\otimes t^n)^{\xi_n-1}(x_1 \otimes t^n) v_{\boldsymbol{\xi}} =& (y_2\otimes t^n)^{\xi_n-2}(x_1 \otimes t^n)(y_3 \otimes t^{n+r}) v_{\boldsymbol{\xi}} = 0\mod V_2, \, r\geq 0 \\
      x_3(y_2\otimes t^n)^{\xi_n-1}(x_1 \otimes t^n) v_{\boldsymbol{\xi}} =& (y_2\otimes t^n)^{\xi_n-2}(x_1 \otimes t^n)^2 v_{\boldsymbol{\xi}}=0
      \end{array}   $$      
    $$  \begin{array}{rl}
      x_2(y_2\otimes t^n)^{\xi_n-1}(x_1 \otimes t^n) v_{\boldsymbol{\xi}}= & (y_2\otimes t^n)^{\xi_n-3}(x_1 \otimes t^n)(y_2 \otimes t^{2n}) v_{\boldsymbol{\xi}}=0\\
      (h \otimes t^p)(y_2\otimes t^n)^{\xi_n-1}(x_1 \otimes t^n) v_{\boldsymbol{\xi}}= & c_1(y_2\otimes t^n)^{\xi_n-2}(x_1 \otimes t^n)(y_2 \otimes t^{n+p} v_{\boldsymbol{\xi}}+c_2 (y_2\otimes t^n)^{\xi_n-1}(x_1 \otimes t^n+p) v_{\boldsymbol{\xi}}=0\\
      h(y_2\otimes t^n)^{\xi_n-1}(x_1 \otimes t^n) v_{\boldsymbol{\xi}}= &(\lambda_1-\xi_n+1, \lambda_2-2 \xi_n+1)(h)(y_2\otimes t^n)^{\xi_n-1}(x_1 \otimes t^n) v_{\boldsymbol{\xi}}
 \end{array}$$ where $c_1, c_2 \in \mathbb Z \setminus \{0\}, p \in \mathbb N.$ \fi
Using similar calculation given in the proof of part(i), it is easy to prove that $\mathfrak n^+[t]. (y_2\otimes t^n)^{\xi_n-1}(x_1 \otimes t^n) v_{\boldsymbol{\xi}}=0$ in $V_2/V_3$, i.e., $(y_2\otimes t^n)^{\xi_n-1}(x_1 \otimes t^n) v_{\boldsymbol{\xi}}$ is the highest weight vector of weight $(\lambda_1-\xi_n+1, \lambda_2-2 \xi_n+1).$ To prove the existence of $\phi_1$, we will prove that the following equation holds  \begin{equation}\label{y2y2x1}
     Y_2(r,s)(y_2\otimes t^n)^{\xi_n-1}(x_1 \otimes t^n) v_{\boldsymbol{\xi}} =0, \text{ for all } s > kr+ \sum\limits_{j \geq k+1} ((\hat{\xi})^-)^+_j.
 \end{equation}
Using \lemref{surjective map}, we have $\dfrac{V(f_{\xi_n -1}((\hat{\bxi})^-))}{U(\mathfrak g[t]) (y_2 \otimes t^{n-1})} \cong V(f_{\xi_n -3}(((\hat{\bxi})^-)^+)) $. Hence it is enough to show that
 \begin{align}
   Y_2(r,s)(y_2\otimes t^n)^{\xi_n-1}(x_1 \otimes t^n) v_{\boldsymbol{\xi}}   & =0, \text{ for all } s > kr+ \sum\limits_{j \geq k+1} (\hat{\xi})^-_j \label{s.6}\\
    (y_2 \otimes t^{n-1})(y_2\otimes t^n)^{\xi_n-1}(x_1 \otimes t^n) v_{\boldsymbol{\xi}} & =0 \mod V_2, \label{s.7}  
    %\text{ in } V_2/V_3 
 \end{align}
Equation \eqref{s.6} is same as equation \eqref{Y2x1} and the proof of \eqref{Y2x1} has already given in part(i). The proof of \eqref{s.7} is trivial. From \eqref{s.6} and \eqref{s.7}, we have \eqref{y2y2x1} holds.
\iffalse Since $(\hat{\xi})^-= (\xi_0 \geq \xi_1 \geq \cdots \geq \xi_{n-3} \geq \xi_{n-2} \geq 1) $ 
 %and $\xi=(\xi_0 \geq \xi_1 \geq \cdots \geq \xi_{n-2} \geq \xi_{n} \geq \xi_{n})$ 
as $\xi_{n-1}=\xi_n$. If $k \leq n-2$ then $\sum\limits_{j \geq k+1} (\hat{\xi})^-_j= \sum\limits_{j \geq k+1} \xi_j- 2 \xi_n +1$. Since $y_2(r,s) v_{\boldsymbol{\xi}}=0$ for $r+s > r+ kr+ \sum\limits_{j \geq k+1} \xi_j$, by using \lemref{technical lemma}(iv), we get $$y_2(r,s)(y_2\otimes t^n)^{\xi_n-1}(x_1 \otimes t^n) v_{\boldsymbol{\xi}} =0, \text{for all } s > kr+ \sum\limits_{j \geq k+1} ((\bxi^1_{\hat n})^-)_j, \text{ for } k 
\leq n-2.$$ If $k \geq n-1$ then $s> (n-1)r$, i.e., $y_2(r,s)= \sum\limits_{p \geq  n} X_p (y_2 \otimes t^p)$ where $X_p \in U(\mathfrak g^{-\alpha_2}[t])$. Therefore, \begin{equation}\label{s.3} y_2(r,s)(y_2\otimes t^n)^{\xi_n-1}(x_1 \otimes t^n) v_{\boldsymbol{\xi}}= \sum\limits_{p \geq  n} X_p (y_2 \otimes t^p)(y_2\otimes t^n)^{\xi_n-1}(x_1 \otimes t^n) v_{\boldsymbol{\xi}}\end{equation} 
Using \eqref{3rd presentation}, we have $y_2(\xi_n+1, n\xi_n+n)v_{\boldsymbol{\xi}}=0$, i.e.,  $$(y_2 \otimes t^n)^{\xi_n+1}v_{\boldsymbol{\xi}}=0.$$ Thus $(y_2 \otimes t^n)^{\xi_n}(x_1 \otimes t^n)v_{\boldsymbol{\xi}}=0.$ Therefore the right-hand side of \eqref{s.3} is zero, as is the left-hand side.
 which implies that \fi
Therefore, the map $\phi_1 $ exists and is well defined. \\
%$y_2(r,s) (y_2\otimes t^n)^{\xi_n-1}(x_1 \otimes t^n) v_{\boldsymbol{\xi}}=0, \forall s > kr+\sum\limits_{j \geq k+1} (((\xi^1_{\hat{n}})^-)^+)_j.$ 
Now for the existence of $\phi_2$, note that $y_2 (\xi_n +1,n\xi_n +n-1)v_{\boldsymbol{\xi}} =0$ implies that $(y_2 \otimes t^n)^{\xi_n}(y_2 \otimes t^{n-1})v_{\boldsymbol{\xi}}=0$. Thus $$\begin{array}{rl}
     x_3 y_1 (y_2 \otimes t^n)^{\xi_n}(y_2 \otimes t^{n-1})v_{\boldsymbol{\xi}} & =0  \\
      x_3\left((y_2 \otimes t^n)^{\xi_n}(y_3 \otimes t^{n-1})v_{\boldsymbol{\xi}}+ (y_2 \otimes t^n)^{\xi_n-1}(y_2 \otimes t^{n-1})(y_3 \otimes t^n)v_{\boldsymbol{\xi}} \right) & =0\\
      (y_2 \otimes t^n)^{\xi_n-1}(x_1 \otimes t^{n})(y_3 \otimes t^{n-1})v_{\boldsymbol{\xi}}+ (x_1 \otimes t^{n-1})(y_2 \otimes t^n)^{\xi_n-1}(y_3 \otimes t^{n})v_{\boldsymbol{\xi}} \\
     +  (y_2 \otimes t^{n-1})(y_2 \otimes t^n)^{\xi_n-2}(x_1 \otimes t^{n}) (y_3\otimes t^{n})v_{\boldsymbol{\xi}}& =0
    \end{array}$$
The second and third term of the above equation are already in $V_3$. Hence $(y_2 \otimes t^n)^{\xi_n-1}(x_1 \otimes t^{n})(y_3 \otimes t^{n-1})v_{\boldsymbol{\xi}}  \in  V_3$ which proves that $\mathfrak n^+[t]. (y_2 \otimes t^{n-1})(y_2\otimes t^n)^{\xi_n-1}(x_1 \otimes t^n) v_{\boldsymbol{\xi}}=0$ in $V_2/V_3$.
\iffalse $$\begin{array}{rl}
    y_1(y_2 \otimes t^{n-1})(y_2\otimes t^n)^{\xi_n-1}(x_1 \otimes t^n) v_{\boldsymbol{\xi}} =& (y_3 \otimes t^{n-1})(y_2\otimes t^n)^{\xi_n-1}(x_1 \otimes t^n) v_{\boldsymbol{\xi}}\\ &+ (y_2 \otimes t^{n-1})(y_2\otimes t^n)^{\xi_n-2}(x_1 \otimes t^n)(y_3 \otimes t^n) v_{\boldsymbol{\xi}} \in V_3\\
    x_3(y_2 \otimes t^{n-1})(y_2\otimes t^n)^{\xi_n-1}(x_1 \otimes t^n) v_{\boldsymbol{\xi}}= & (x_1 \otimes t^{n-1})(y_2\otimes t^n)^{\xi_n-1}(x_1 \otimes t^n) v_{\boldsymbol{\xi}} \in V_3 \\
    x_2(y_2 \otimes t^{n-1})(y_2\otimes t^n)^{\xi_n-1}(x_1 \otimes t^n) v_{\boldsymbol{\xi}} = &0
\end{array}$$\fi
Thus $(y_2 \otimes t^{n-1})(y_2\otimes t^n)^{\xi_n-1}(x_1 \otimes t^n) v_{\boldsymbol{\xi}}$ is the highest weight vector of weight $(\lambda_1-\xi_n, \lambda_2-2\xi_n-1).$ To prove the existence of $\phi_3$, it is enough to show that $$Y_2(r,s)(y_2 \otimes t^{n-1})(y_2\otimes t^n)^{\xi_n-1}(x_1 \otimes t^n) v_{\boldsymbol{\xi}} =0, \text{ for all } s > kr+ \sum\limits_{j \geq k+1} ((\hat{\xi})^-)^-_j.$$
Since $((\hat{\xi})^-)^-= (\xi_0 \geq \xi_1 \geq \cdots \geq \xi_{n-3} \geq \xi_{n-2}-1) $ 
%and $\xi=(\xi_0 \geq \xi_1 \geq \cdots \geq \xi_{n-2} \geq \xi_{n} \geq \xi_{n})$ 
as $\xi_{n-1}=\xi_n$. If $k \leq n-3$, then $\sum\limits_{j \geq k+1} ((\hat{\xi})^-)^-_j= \sum\limits_{j \geq k+1} \xi_j- 2 \xi_n -1$. Using \eqref{s.6}, we have $Y_2(r,s)(y_2 \otimes t^n)^{\xi_n-1}(x_1 \otimes t^n) v_{\boldsymbol{\xi}}=0$ for $r+s > r+ kr+ \sum\limits_{j \geq k+1} \xi_j-2\xi_n+1$. Note that $(y_2 \otimes t^n)^{\xi_n-1}(x_1 \otimes t^n) v_{\boldsymbol{\xi}}$ satisfies all the conditions of $v$ in \lemref{technical lemma} for $N=r+ kr+ \sum\limits_{j \geq k+1} \xi_j-2\xi_n+1$,
%\textit{it is not true as $(y_2 \otimes t^n)^{\xi_n-1}(x_1 \otimes t^n) v_{\boldsymbol{\xi}}$ is the highest weight vector in $\frac{V_1}{V_2}$ not in $V$. Therefore, we can't apply Lemma again},
using part(i) of \lemref{technical lemma}, we get $$Y_2(r,s)(y_2 \otimes t^{n-1})(y_2\otimes t^n)^{\xi_n-1}(x_1 \otimes t^n) v_{\boldsymbol{\xi}} =0, \text{for all } s > kr+ \sum\limits_{j \geq k+1} \xi_j-2 \xi_n-1.$$ 
If $k \geq n-2$ then $s> (n-2)r$, i.e., $y_2(r,s)= \sum\limits_{p \geq  n-1} X_p (y_2 \otimes t^p)$ where $X_p \in U(\mathfrak g^{-\alpha_2}[t])$. Therefore, \begin{equation}\label{s.4} y_2(r,s)(y_2 \otimes t^{n-1})(y_2\otimes t^n)^{\xi_n-1}(x_1 \otimes t^n) v_{\boldsymbol{\xi}}= \sum\limits_{p \geq  n-1} X_p (y_2 \otimes t^p)(y_2 \otimes t^{n-1})(y_2\otimes t^n)^{\xi_n-1}(x_1 \otimes t^n) v_{\boldsymbol{\xi}}\end{equation} 
Using \eqref{3rd presentation}, we have $y_2(\xi_n+1, n\xi_n+n-1)v_{\boldsymbol{\xi}}=0$, i.e., $(y_2 \otimes t^{n-1})(y_2 \otimes t^n)^{\xi_n}v_{\boldsymbol{\xi}}=0.$ Thus $(y_2 \otimes t^{n-1})(y_2 \otimes t^n)^{\xi_n-1}(x_1 \otimes t^n)v_{\boldsymbol{\xi}}+ (x_1 \otimes t^{n-1})(y_2 \otimes t^n)^{\xi_n}v_{\boldsymbol{\xi}}=0$ implies that $(y_2 \otimes t^{n-1})^2(y_2 \otimes t^n)^{\xi_n-1}(x_1 \otimes t^n)v_{\boldsymbol{\xi}}=0.$ Hence the right-hand side of \eqref{s.4} is zero, so is the left-hand side.
%We have $y_2(r,s) (y_2 \otimes t^{n-1})(y_2\otimes t^n)^{\xi_n-1}(x_1 \otimes t^n) v_{\boldsymbol{\xi}}=0, \forall s > kr+\sum\limits_{j \geq k+1} (((\xi^1_{\hat{n}})^-)^0)^-)_j.$
Therefore, the map $\phi_3 $ exists and is well-defined. 
 Using similar arguments, it is easy to prove that the map $\phi_4$ exists and is well-defined.
 %Similar calculations also works for the case $n =1$.\\
%In the case $\xi_{n-1} = \xi_n =1$, define $V_5 = (0)$ and rest of the proof remains the same. 
%\subsection{} \textit{proof of part (iii) of \thmref{shortexactsequence}.}\\Let us consider the case when $\xi_n = \xi_{n-1} =1$, 
\subsection{} The following proposition is given in \cite{macedo}.
\begin{prop} [See {\cite[Proposition 4.4]{macedo}}]
    Let $\lambda \in P^+$, $\xi = (\xi_0, \cdots, \xi_n)$ a partition of $\lambda_2$ and $\bxi = (\lambda_1,\xi)$. For every choice of pairwise distinct complex numbers $z_0, \cdots, z_n$ and complex numbers $a_0, \cdots,a_n$ satisfying $a_0+\cdots+a_n = \lambda_1$, there exist a surjective $\mathfrak{g}[t]$-module homomorphism $$V(\bxi)\twoheadrightarrow K(a_0, \xi_0)^{z_0}\ast \cdots \ast K(a_n,\xi_n)^{z_n}.$$\qed
\end{prop}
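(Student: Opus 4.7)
The plan is to realize the candidate map $V(\bxi) \twoheadrightarrow K(a_0,\xi_0)^{z_0} \ast \cdots \ast K(a_n,\xi_n)^{z_n}$ by sending the cyclic generator $v_{\bxi}$ to the image of $k_{\mu_0} \otimes \cdots \otimes k_{\mu_n}$ in degree zero of the fusion product, where $\mu_i := (a_i,\xi_i)$. I would first produce a non-graded surjection from the local Weyl module $W(\psi)$ onto the tensor product $K(\mu_0)^{z_0} \otimes \cdots \otimes K(\mu_n)^{z_n}$, then pass to the associated graded, and finally check that the extra CV relations also vanish on the fusion-product generator.

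Set $\psi \in (\mathfrak{h}[t])^*$ by $\psi(h \otimes t^p) = \sum_{i=0}^n z_i^p \mu_i(h)$, so that $\psi|_\mathfrak{h} = \lambda$ and $\psi(h_2) = |\xi| = \lambda_2$, and write $v := k_{\mu_0} \otimes \cdots \otimes k_{\mu_n}$. The defining relations of \defref{loc def} are checked directly: the relations $\mathfrak{n}^+[t] v = 0$ and $(h \otimes t^p) v = \psi(h \otimes t^p) v$ follow componentwise from $\mathfrak{n}^+ k_{\mu_i} = 0$ and the $\mathfrak{h}$-action on $k_{\mu_i}$, while $(y_2 \otimes 1)^{|\xi|+1} v = 0$ follows from the multinomial expansion: every term applies some $y_2^{k_j}$ to $k_{\mu_j}$ with $\sum k_i = |\xi|+1 > \sum \xi_i$, so by pigeonhole some $k_j \geq \xi_j + 1$ and the Kac-module relation $y_2^{\xi_j+1} k_{\mu_j} = 0$ kills that term. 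This yields a $\mathfrak{g}[t]$-surjection $W(\psi) \twoheadrightarrow K(\mu_0)^{z_0} \otimes \cdots \otimes K(\mu_n)^{z_n}$; taking associated graded with respect to the natural $\bu(\mathfrak{g}[t])$-filtration and invoking $Gr_{w_\psi} W(\psi) \cong W(\lambda)$ produces a graded surjection $W(\lambda) \twoheadrightarrow K(\mu_0)^{z_0} \ast \cdots \ast K(\mu_n)^{z_n}$.

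The main step is to verify that the CV relations $Y_2(r,s) v = 0$ for $s > kr + \xi_{k+1} + \cdots + \xi_n$ already hold on $v$ in the tensor product, so the surjection descends to $V(\bxi)$. The crucial observation is that $Y_2(r,s) = (x_2 \otimes t)^{(s)} (y_2 \otimes 1)^{(r+s)}$ lies in $\bu(\mathfrak{sl}_{2,\alpha_2}[t])$, and since $x_2 k_{\mu_i} = 0$, $h_2 k_{\mu_i} = \xi_i k_{\mu_i}$, and $y_2^{\xi_i+1} k_{\mu_i} = 0$, the cyclic $\mathfrak{sl}_{2,\alpha_2}$-submodule of $K(\mu_i)$ generated by $k_{\mu_i}$ is exactly the $(\xi_i+1)$-dimensional irreducible $\mathfrak{sl}_2$-module $V(\xi_i)$. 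Consequently the $\mathfrak{sl}_{2,\alpha_2}[t]$-cyclic submodule of the tensor product generated by $v$ is a quotient of the ordinary evaluation tensor product $V(\xi_0)^{z_0} \otimes \cdots \otimes V(\xi_n)^{z_n}$, and the relations in question are precisely the Chari--Venkatesh relations associated to the partition $\xi$ in the classical $\mathfrak{sl}_2[t]$ setting. By the original theorem of Chari--Venkatesh in \cite{cv}, these relations vanish on the tensor-product generator, and hence on $v$.

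The main conceptual obstacle is the reduction in the previous paragraph: the CV relations are \emph{a priori} stated inside the superalgebra setting of $\mathfrak{sl}(1|2)[t]$, but recognizing that they involve only elements of the ordinary $\mathfrak{sl}_{2,\alpha_2}[t]$-subalgebra --- on which the Kac-module generators behave like ordinary $\mathfrak{sl}_2$ highest-weight vectors --- allows one to outsource the verification to the classical theory. Once this reduction is made, the argument is essentially bookkeeping, and the fact that the $\mathfrak{sl}_2[t]$ CV relations depend only on the multiset $\{\xi_0,\ldots,\xi_n\}$ is consistent with the conclusion holding for every labeling of the pairs $(a_i, \xi_i, z_i)$.
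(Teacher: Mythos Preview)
The paper does not supply its own proof of this proposition: it is quoted verbatim from \cite[Proposition~4.4]{macedo} and closed with a \qed. Your argument is correct and follows the standard route one would expect in \cite{macedo}: realize the tensor product of evaluation Kac modules as a quotient of $W(\psi)$ by checking the defining relations on $v=k_{\mu_0}\otimes\cdots\otimes k_{\mu_n}$, pass to associated graded (using that a surjection of cyclic modules sending generator to generator respects the $\bu(\mathfrak g[t])$-degree filtrations) to obtain $W(\lambda)\twoheadrightarrow K(\mu_0)^{z_0}\ast\cdots\ast K(\mu_n)^{z_n}$, and then descend to $V(\bxi)$ by observing that the CV relations $Y_2(r,s)v$ live entirely in the $\mathfrak{sl}_{2,\alpha_2}[t]$-subalgebra acting on $\bigotimes_i V(\xi_i)^{z_i}$, where the classical Chari--Venkatesh theorem from \cite{cv} applies.
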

The above proposition gives us \begin{equation}\label{greater}\dim V(\bxi)\geq \prod_{k=0}^n \dim K(a_k,\xi_k)^{z_k}= \prod_{k=0}^n 4\xi_k = 4^{n+1}\xi_0 \cdots \xi_n \end{equation}
Using \thmref{shortexactsequence}, we have $\dim V(\bxi) \leq \dim \Ker(\phi)+\dim V(\bxi^+)$. Now we  claim that 
\begin{equation} \label{lessthan}
    \dim \Ker(\phi)+\dim V(\bxi^+)\leq 4^{n+1}\xi_0\cdots \xi_n.
\end{equation} To prove the claim, we will apply induction on $n$. For $n=1$ we have the following cases. \\
Case I: If $\xi_0> \xi_1 =1$, then using the proof of part(i) of \thmref{shortexactsequence} and \propref{dimension of kac} we have 
% if $\xi_0 > \xi_1 =1$ 
we have $$\begin{aligned}
 &\dim \ker(\phi)+\dim V(\bxi^+) \\
 & \leq \dim(V(\bxi^+))+ \dim (\tau_{1}V(f_{1}(\bxi^-)))+ \dim (\tau_{1}V(((\hat{\bxi})^-)))+ \dim (\tau_{1}V(f_{1}((\hat{\bxi})^-))) \\
 %+\dim (\tau_{\xi_1}V(f_{\xi_1-5}((\tilde{\bxi})^-)))\\
& = 4(\xi_0+1)+4(\xi_0-1)+4\xi_0 +4\xi_0= 4^2\xi_0
\end{aligned}$$
Case II: If $\xi_0 = \xi_1= 1$, then using the proof of \thmref{n=1} and \propref{dimension of kac} it is easy to see that $\dim\ker(\phi)+\dim V(\bxi^+)\leq \dim(V(\bxi^+))\dim(\tau_{1}  V((\hat{\bxi})^-))+\dim(\tau_{1}  V(f_{1}((\hat{\bxi})^-))) = 4.2+4+4=  4^2$.\\
Apply induction on $\xi_1$, if $\xi_1 =1$ we are done by the arguments given above. Let the result hold for all $p<\xi_1$. Consider the case $\xi_{0}> \xi_1\geq 2$, using \thmref{shortexactsequence}, induction arguments and similar calculations given above we have $$\dim \Ker(\phi)+\dim V(\bxi^+)\leq 4(\xi_0 -\xi_1)+8(\xi_0 -\xi_1+1)+ 4(\xi_0-\xi_1+2)+ 4^2 (\xi_0+1)(\xi_1 -1) = 4^2\xi_0\xi_1.$$ Also, when $\xi_{0} = \xi_1\geq 2$ then $$\dim \ker(\phi)+\dim V(\bxi^+)\leq 2. 4+4.2+4^2(\xi_0+1)(\xi_0 -1) = 4^2 \xi_0^2.$$
Hence we have proved the result for $n=1$ case. Now we only consider the partitions $\xi = (\xi_0,\cdots, \xi_n)$ where $\xi_{n-1}>\xi_n$ or $\xi_{n-1}=\xi_n\geq 2$. Assume that the result holds for all such partitions where the number of parts is strictly less than $n$. We need to prove that the result holds for $n$. Apply induction on $\xi_n$. If $\xi_n =1$, then we are in the case where $\xi_{n-1}>\xi_n$. Using \thmref{shortexactsequence} and induction, it is easy to prove that 
\begin{equation*}
    \dim \Ker(\phi)+\dim V(\bxi^+)\leq 4^{n+1}\xi_0 \cdots \xi_{n-1}.
\end{equation*}
Let the result hold for all $p<\xi_n$. Using the proof of \thmref{shortexactsequence} and induction argument it is easy to prove that \begin{equation*}
    \dim \Ker(\phi)+\dim V(\bxi^+)\leq 4^{n+1}\xi_0 \cdots \xi_n.
\end{equation*}
Using \eqref{greater} and \eqref{lessthan} we have that \begin{equation}\label{equality}
\dim V(\bxi) = 4^{n+1}\xi_0 \cdots \xi_n\end{equation}
\begin{rem}\label{dim_argument}The dimension argument given above proves that each $\phi_i$ given in the proof of \thmref{n=1} is an isomorphism. \end{rem}
\begin{proof}[Proof of \thmref{shortexactsequence}]
    Using \lemref{surjective map}, proof of part(i) - (ii) of \thmref{shortexactsequence},  and \eqref{equality} we have that all surjective $\mathfrak{g}[t]$-module homomorphisms given in the proof of part(i) and part(ii) of \thmref{shortexactsequence} are isomorphisms. Hence the \thmref{shortexactsequence}.
\end{proof}
The following result was already proved in \cite{macedo} but we are providing a new proof as it is a direct implication of \thmref{shortexactsequence}.
\begin{cor}
Given $\lambda = (\lambda_1,\lambda_2) \in P^+$ and $\xi = (\xi_0, \cdots,\xi_n)$ a partition of $\lambda_2$ where $\xi$ satisfies the hypothesis given in \thmref{n=1} or \thmref{shortexactsequence}. Consider $\bxi = (\lambda_1,\xi)$ then for every choice of pairwise-distinct complex numbers $z_0, \cdots, z_n$ and $a_0, \cdots, a_n$ satisfying $a_0+\cdots+a_n = \lambda_1$, there exist an isomorphism of $\mathfrak{g}[t]$-modules $$V(\bxi)\cong_{\mathfrak{g}[t]} K(a_0,\xi_0)^{z_0}\ast \cdots\ast K(a_n,\xi_n)^{z_n}.$$     
\end{cor}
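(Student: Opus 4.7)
The plan is to combine the surjection from Proposition 4.4 of \cite{macedo} (stated in the excerpt just before the corollary) with the exact dimension computation obtained via the short exact sequence in \thmref{shortexactsequence} (equation \eqref{equality}). Concretely, the proposition produces a surjective $\mathfrak{g}[t]$-module homomorphism
\[
\Psi: V(\bxi) \twoheadrightarrow K(a_0,\xi_0)^{z_0}\ast \cdots \ast K(a_n,\xi_n)^{z_n},
\]
so the corollary will follow as soon as both sides have the same finite dimension.

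The first step is to read off the dimension of the fusion product. Since the underlying vector space of a fusion product is the tensor product of the underlying vector spaces of its factors, and since $\dim K(a_k,\xi_k)^{z_k} = \dim K(a_k,\xi_k) = 4\xi_k$ by \propref{dimension of kac} (for $\xi_k \ge 1$), we obtain
\[
\dim\bigl(K(a_0,\xi_0)^{z_0}\ast \cdots \ast K(a_n,\xi_n)^{z_n}\bigr) = \prod_{k=0}^{n} 4\xi_k = 4^{n+1}\xi_0\xi_1\cdots\xi_n.
\]
The second step is to invoke \eqref{equality}, which gives $\dim V(\bxi) = 4^{n+1}\xi_0\cdots\xi_n$ for partitions $\xi$ satisfying the hypotheses of \thmref{n=1} or \thmref{shortexactsequence}. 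Comparing the two computations shows that the surjection $\Psi$ is in fact a dimension-preserving map between finite-dimensional vector spaces, hence a $\mathfrak{g}[t]$-module isomorphism.

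I expect no real obstacle here beyond bookkeeping: the proposition of \cite{macedo} already supplies the map, and \eqref{equality} already supplies the dimension. The only care needed is to note that the hypothesis on $\xi$ in the corollary matches the hypothesis under which \eqref{equality} was established, and to remark that the case when some $\xi_k$ vanishes (or the empty partition case) is either trivial or reduces to a partition with fewer parts, so the dimension formula $\prod_k 4\xi_k$ applies in the non-degenerate range the corollary addresses. With these remarks the proof is immediate and can be presented in just a few lines.
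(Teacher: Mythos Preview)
Your proposal is correct and matches the paper's approach exactly: the paper derives \eqref{greater} from the surjection of Proposition 4.4 in \cite{macedo}, derives \eqref{lessthan} from the short exact sequences of \thmref{n=1} and \thmref{shortexactsequence}, obtains \eqref{equality}, and then states the corollary as an immediate consequence (a surjection between finite-dimensional spaces of equal dimension is an isomorphism). Your remark about vanishing $\xi_k$ is unnecessary since the hypotheses already force $\xi_n>0$, but this does not affect the argument.
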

\begin{rem}
    Using \cite[Proposition 6.11]{cv} and \thmref{shortexactsequence}, we can also derive the basis and character formula of Chari-Venkatesh module $V(\bxi)$ given in \cite{macedo}.
\end{rem}
\section*{Acknowledgement} {The first author would like to thank R.Venkatesh for useful suggestions and also acknowledges the postdoctoral research grant from the National Board of Higher Mathematics, reference number 0204/33/2024/R\&D-II/14345.}

%-------------Bibliography---------------
\bibliographystyle{plain}
\bibliography{Bibliography}
\end{document}